\documentclass[10pt]{amsart}
\usepackage[T1]{fontenc}
\usepackage{CJKutf8}
\usepackage[margin=1.05in]{geometry}
\usepackage{times}
\usepackage{amsmath,amssymb,mathtools,amsthm}
\usepackage{newtxmath}
\usepackage{mathrsfs}
\usepackage{microtype}
\usepackage{enumitem}
\usepackage{needspace}
\usepackage{xcolor}
\usepackage{tikz}
\definecolor{RoyalPurple}{HTML}{7851A9}
\definecolor{HighlightGold}{HTML}{FFD700}
\usepackage[colorlinks=true,linkcolor=RoyalPurple,citecolor=RoyalPurple,urlcolor=RoyalPurple]{hyperref}
\usepackage[capitalise,nameinlink,noabbrev]{cleveref}
\usepackage{k0-stacks}
\makeatletter
\renewcommand{\paragraph}{\@startsection{paragraph}{4}%
  {\parindent}{\z@}{-\fontdimen2\font}{\normalfont\bfseries}}
\makeatother
\crefname{theorem}{theorem}{theorems}
\crefname{proposition}{proposition}{propositions}
\crefname{lemma}{lemma}{lemmas}
\crefname{corollary}{corollary}{corollaries}
\crefname{definition}{definition}{definitions}
\crefname{remark}{remark}{remarks}
\Crefname{theorem}{Theorem}{Theorems}
\Crefname{proposition}{Proposition}{Propositions}
\Crefname{lemma}{Lemma}{Lemmas}
\Crefname{corollary}{Corollary}{Corollaries}
\Crefname{definition}{Definition}{Definitions}
\Crefname{remark}{Remark}{Remarks}
\crefformat{equation}{(#2#1#3)}
\Crefformat{equation}{(#2#1#3)}
\usepackage[backend=biber, style=alphabetic, maxbibnames=4,
  giveninits=true]{biblatex}
  \providecommand{\bA}{\mathbb{A}} 
\providecommand{\sB}{\mathscr{B}}   
\providecommand{\sC}{\mathscr{C}}  \providecommand{\bC}{\mathbb{C}} 
\providecommand{\sD}{\mathscr{D}}   
 \providecommand{\cE}{\mathcal{E}}  
 \providecommand{\cF}{\mathcal{F}} \providecommand{\bF}{\mathbb{F}} 
  \providecommand{\bG}{\mathbb{G}} 
\providecommand{\sH}{\mathscr{H}}   
\providecommand{\sI}{\mathscr{I}}

 \providecommand{\cL}{\mathcal{L}}  
\providecommand{\sM}{\mathscr{M}}   
   
 \providecommand{\cO}{\mathcal{O}}  
  \providecommand{\bP}{\mathbb{P}} 
  \providecommand{\bQ}{\mathbb{Q}}

 \providecommand{\cU}{\mathcal{U}}  
 \providecommand{\cV}{\mathcal{V}}  
\providecommand{\sW}{\mathscr{W}}   
\providecommand{\sX}{\mathscr{X}}   
\providecommand{\sY}{\mathscr{Y}}   
\providecommand{\sZ}{\mathscr{Z}}  \providecommand{\bZ}{\mathbb{Z}} 

\begin{document}
\begin{CJK*}{UTF8}{gbsn}

\title{The integral $K_0$-ring of the stack of smooth hyperelliptic curves}

\DeclareRobustCommand{\ChineseAuthorName}{曾若凡}
\author[Michael R. Zeng]{Michael Ruofan Zeng \\ \ChineseAuthorName}

\begin{abstract}
We compute the integral $K_0$-ring of the moduli stack $\sH_g$ of smooth hyperelliptic curves of genus $g\ge2$ over a field of characteristic zero. We show that it is a free abelian group of rank $8g+2$ for even $g$ and $8g+4$ for odd $g$. We use the isotypic decomposition of the $K_0$ of a related $\mmu{g+1}$-gerbe to obtain presentations for both even and odd $g$'s from the standard quotient presentation of $\sH_g$. Thus, our technique differs from the $CH^\bullet(\sH_g)$ computations of Edidin--Fulghesu, Fulghesu--Viviani, Di Lorenzo, and the $K_0(\sM_2)$ computation of Edidin--Hu. Using Pagani's description of the inertia stack of $\sH_g$, we apply Riemann--Roch to interpret the rank of $K_0(\sH_g)$ as the number of inertia components. Finally, we compare the $K_0$ and Chow rings of $\sH_g$ through the gamma-filtration and give presentations to the associated graded rings $\operatorname{gr}_{\gamma}K_0(\sH_g)\otimes\bF_p$ for every prime $p$.
\end{abstract}

\maketitle

\begingroup
\setlength{\baselineskip}{11pt}
\tableofcontents
\endgroup
\newpage

\section{Introduction}
The moduli stack $\sH_g$ of smooth hyperelliptic curves of genus $g\ge2$ admits a description as the quotient of an affine space of binary forms, which makes its intersection theory accessible through equivariant methods. Its integral Chow ring is known for all $g$ by work of Edidin--Fulghesu, Fulghesu--Viviani, and Di~Lorenzo \cite{EF09,FV11,DL21}.  In this paper, we compute its integral Grothendieck ring of perfect complexes $K_0(\sH_g) := K_0(\mathsf{Perf}_{\sH_g})$ (equivalent to the Grothendieck ring of vector bundles since $\sH_g$ is smooth), extending the genus-two $K_0(\sH_2)\cong K_0(\sM_2)$ computation of Edidin and Hu \cite[Theorem~1.1]{EH24}.

Let us first review the quotient presentation of $\sH_g$ in terms of binary forms. Let $\bA^{N+1}$ be the affine space of binary forms of degree $N = 2g+2$ in $x_0, x_1$, and let $\Delta$ be the discriminant hypersurface of forms with a repeated root.  A square-free binary form $f$ defines a smooth hyperelliptic curve of genus $g$ with equation $y^2 = f(x_0, x_1)$, which is a double cover of $\bP^1$ branched over the roots of $f$.  Reparametrizing $f$ via the standard $\GL_2$ action where $A\cdot f = f(A^{-1}x)$ preserves the isomorphism type of the curve. Furthermore, a root of unity $\eta\in\mmu{g+1}$ scales a form of degree $N = 2(g+1)$ by $\eta^{-N} = 1$, so the action factors through $\GL_2/\mmu{g+1}$. Arsie and Vistoli \cite{AV04} proved that
\begin{equation}
  \label{eq:Hg-quotient-presentation}
  \sH_g \;\cong\;
\bigl[(\bA^{N+1}\smallsetminus\Delta)\big/(\GL_2/\mmu{g+1})\bigr].
\end{equation}

Edidin and Fulghesu computed the integral Chow ring $CH^\bullet(\sH_g)$ for even $g$ \cite{EF09}. For odd $g$, Di~Lorenzo completed the computation begun by Fulghesu--Viviani \cite{FV11}, correcting a pushforward computation that holds after inverting $2$, but fails integrally \cite[Proposition~5.1 and Corollary~5.3]{DL21}.  The Chow ring presentations are therefore
\begin{equation}\label{eq:chow}
CH^\bullet(\sH_g) \;\cong\;
\begin{cases}
\bZ[c_1, c_2]\big/\bigl(2(2g+1)c_1,\ g(g-1)c_1^2 -
4g(g+1)c_2\bigr), & g \text{ even},\\[2pt]
\bZ[t, c_2, c_3]\big/\bigl(4(2g+1)t,\ 8t^2 - 2g(g+1)c_2,\
2c_3\bigr), & g \text{ odd}.
\end{cases}
\end{equation}
The two parities differ because of the structure of $\GL_2/\mmu{g+1}$. For even $g$, $\GL_2/\mmu{g+1}$ is isomorphic to the special group $\GL_2$, and the computation of Edidin--Fulghesu used standard $\GL_2$-equivariant techniques. For odd $g$, however, $\GL_2/\mmu{g+1}$ is isomorphic to $\bG_m\times\PGL_2$, and the non-special factor $\PGL_2$ complicates the integral Chow computation. Di~Lorenzo instead used another presentation of $\sH_g$ as a $\GL_3\times\bG_m$-quotient, where equivariant techniques again apply \cite{DL21}. In contrast, we directly work with the presentation \eqref{eq:Hg-quotient-presentation} in both parities: we compute the $K_0$-ring of the $\mmu{g+1}$-gerbe \begin{equation}
\sX_g := [(\bA^{N+1}\smallsetminus\Delta)/\GL_2]
\end{equation} over $\sH_g$ and extract $K_0(\sH_g)$ as its weight-zero summand for the $\mmu{g+1}$-characters.

The generators of our presentations of $K_0(\sH_g)$ are classes of the following vector bundles on $\sH_g$.  Let $\pi\colon \sC\to\sH_g$ be the universal curve, $\iota$ the hyperelliptic involution on $\sC$, and $\sW\subset\sC$ the divisor of Weierstrass points.  The bundles in concern come from the univeral family of rational curves $\sC/\iota$, and they depend on the parity.  For even $g$, $\sC/\iota\cong\bP(\cV)$ for the bundle $\cV$ induced by the standard two-dimensional representation under the isomorphism $\GL_2/\mmu{g+1}\cong\GL_2$.  For odd $g$, $\sC/\iota$ embeds in the projectivization of the rank-three bundle
\[
\cE \;:=\; \pi_*\,\omega_{\sC/\sH_g}^{-1}(\sW).
\]
In this case, we also use the line bundle
\[
\cL \;:=\; \pi_*\,\omega_{\sC/\sH_g}^{\otimes(g+1)/2}
\bigl(\tfrac{1-g}{2}\sW\bigr),
\]
which generates $\Pic(\sH_g)\cong\bZ/(8g+4)$ \cite[Section~6]{DL21}. Indeed, the generators of the Chow presentations \eqref{eq:chow} are Chern classes of these bundles: $c_i=c_i(\cV)$ for even $g$ \cite{EF09}, and $t=c_1(\cL)$ and $c_i=c_i(\cE)$ for odd $g$ \cite[Theorem~6.2]{DL21}.

Our first presentation of $K_0(\sH_g)$ uses $K_0$-classes of these vector bundles. Write $\nu := [\cV]$ and $\lambda := [\det\cV]$ for even $g$, and $\sigma := [\cE]$ and $\lambda := [\cL]$ for odd $g$. In both parities, $\lambda$ is the class of a generator of $\Pic(\sH_g)$ (\Cref{rem:pic}).  For $k\ge0$, define the polynomials
\begin{equation}\label{eq:famdef}
P_k(X, Y) \;=\; \sum_{j=0}^{\lfloor k/2\rfloor}
(-1)^j\binom{k-j}{j}\,Y^{\,j}X^{\,k-2j},
\qquad
Q_k(Z) \;=\; \sum_{j=0}^{k}
(-1)^j\binom{2k-j}{j}\,(Z+1)^{k-j}.
\end{equation}
They satisfy $P_k(\nu,\lambda)=[\Sym^k\cV]$, while $Q_k(\sigma)$ is the class of the bundle induced by the $(2k+1)$-dimensional irreducible representation of $\PGL_2$ under $\GL_2/\mmu{g+1}\cong\bG_m\times\PGL_2$. For example, $P_2(\nu,\lambda)=\nu^2-\lambda$ and $Q_2(\sigma)=\sigma^2-\sigma-1$.  With these polynomials, we can write down the relations rather compactly.

\begin{theorem}\label{thm:intromain}
Let $g\ge2$ and work over a field of characteristic zero. Then,
\begin{align}\label{eq:K0-Hg-bundle-class}
K_0(\sH_g) \;\cong \; \bZ[\nu, \lambda^{\pm1}]\big/\bigl(
&1 + \nu\lambda^{1-g}\,P_{2g-1}(\nu, \lambda)
- \lambda^{2-2g}\,P_{4g}(\nu, \lambda),\\[-1pt]
&\nu + \nu\lambda^{2-g}\,P_{2g-2}(\nu, \lambda)
- \lambda^{3-2g}\,P_{4g-1}(\nu, \lambda)\bigr)
&& (g \text{ even}),\notag\\
K_0(\sH_g) \;\cong \; \bZ[\sigma, \lambda^{\pm1}]\big/\bigl(
&1 + \lambda^2\bigl(Q_g(\sigma) + Q_{g-1}(\sigma)\bigr)
- \lambda^4\,Q_{2g}(\sigma),\\[-1pt]
&(\sigma{+}1)\bigl(1 + \lambda^2\,Q_{g-1}(\sigma)\bigr)
- \lambda^4\bigl(Q_{2g}(\sigma) + Q_{2g-1}(\sigma)\bigr)\bigr)
&& (g \text{ odd}).\notag
\end{align}
In particular, $K_0(\sH_g)$ is a free abelian group of rank $8g+2$ for even $g$ and $8g+4$ for odd $g$.
\end{theorem}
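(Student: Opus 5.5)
The plan is to compute $K_0(\sX_g)$ for the $\mmu{g+1}$-gerbe $\sX_g=[(\bA^{N+1}\smallsetminus\Delta)/\GL_2]$, and then recover $K_0(\sH_g)$ as the summand of $\mmu{g+1}$-weight zero.

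\emph{Step 1: localization.} Since $\GL_2$ is special and $\bA^{N+1}$ is an equivariant affine space, we have $K_0([\bA^{N+1}/\GL_2])\cong R(\GL_2)=\bZ[V,D^{\pm1}]$, where $V$ is the standard representation and $D=\det V$. The localization sequence
\[
K_0^{\GL_2}(\Delta)\to R(\GL_2)\to K_0(\sX_g)\to 0
\]
then shows that $K_0(\sX_g)$ is $R(\GL_2)$ modulo the image of the pushforward from $\Delta$.

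\emph{Step 2: the image of the pushforward.} I would resolve $\Delta$ equivariantly in the usual way, as in the Chow computations of \cite{EF09,DL21}. Consider the map $\bP^1\times\bA^{N-1}\to\bA^{N+1}$ sending $(\ell,h)\mapsto \ell^2h$; more precisely, it is the projective bundle of pairs consisting of a point $p\in\bP(V)$ and a form vanishing doubly at $p$, which surjects onto $\Delta$. The key claim is that the image of $K_0$ under this envelope already generates the image of $K_0^{\GL_2}(\Delta)$. This needs a stratification argument: iterate over the strata with more repeated roots, or argue via a Chow envelope, which is surjective on $K_0$ rationally. Integral surjectivity for $K$-theory is the subtle point. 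I expect it to follow because each stratum $\Delta_r$ has an envelope by $\bP(\Sym^{r}V)\times$(forms), and these pushforwards factor through the first one.

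The $K$-theoretic pushforward from $\bP(V)\times\bA^{N-1}$ is computed by the Koszul class of the linear subspace, namely $\lambda_{-1}$ of the quotient of $\Sym^NV^\vee$ by the forms divisible by $\ell^2$, followed by pushforward along $\bP(V)\to\mathrm{pt}$. The result is generated as an $R(\GL_2)$-module by the images of $1$ and $\cO(1)$. A Bott-residue/Weyl-character computation should turn these into the two stated relations. The factors $P_{4g}$ and $P_{2g-1}$ arise from $\Sym^k$ characters and geometric series in $\ell$, with the powers of $D$ adjusted appropriately.

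\emph{Step 3: weight-zero part.} The group $\mmu{g+1}\subset\GL_2$ acts on $V$ with weight $1$. The weight-zero subring of $R(\GL_2)$ is $R(\GL_2/\mmu{g+1})$. For even $g$ it is generated by $\nu=V\otimes D^{-g/2}$-type twists (under $\GL_2/\mmu{g+1}\cong\GL_2$) together with $\lambda$. For odd $g$ it is generated by $\sigma$ and $\lambda$, via $R(\bG_m\times\PGL_2)$, with the $Q_k$ as the $\PGL_2$ characters. The two relations from Step 2 have homogeneous weights, so the ideal decomposes isotypically. I must check that the weight-zero part of the ideal is generated by suitable twists of the two relations, which gives exactly the displayed forms.

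\emph{Step 4: rank and freeness.} The quotient is $\bZ[\nu,\lambda^{\pm1}]$ by two relations whose leading terms in $\nu$ (respectively $\sigma$) are monic up to units. Arguing as for a regular sequence, or via a Gröbner basis over $\bZ$ in which the resultant eliminating $\nu$ gives a monic polynomial in $\lambda$ with unit constant term, I would show the quotient is a finite free $\bZ$-module. The rank then equals the number of common solutions counted with multiplicity, which is checked over $\bC$ by counting common zeros in $(\bC^*)^2$ from the character identities. Those identities say $P_k(x+x^{-1}\text{-type})$ equals $(a^{k+1}-b^{k+1})/(a-b)$, which yields $8g+2$, respectively $8g+4$.

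The main obstacle is the integral surjectivity in Step 2, together with verifying freeness over $\bZ$ with no torsion in Step 4.
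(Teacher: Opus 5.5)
\textbf{There is a genuine gap in Step~2, and it is the crux of the theorem.}

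The single map $\bP(V)\times\bA^{N-1}\to\Delta$, $(\ell,h)\mapsto\ell^2h$, is not an equivariant envelope. Take the stratum of forms $q^2u$ with $q$ square-free of degree $r\ge2$ and $u$ square-free and coprime to $q$. Over it, the fibre is the set of $r$ roots of $q$, and monodromy permutes them transitively. So no invariant subvariety maps birationally onto the closure of that stratum.

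A genuine envelope needs all the maps $(q,u)\mapsto q^2u$ with $\deg q=r$, $1\le r\le g+1$. For such an envelope, surjectivity on equivariant $\mathrm G_0$ is standard and is not the issue. But these maps do not factor through the case $r=1$, because there is no equivariant choice of a root of $q$. Routing through the incidence $\{(\ell,q,u):\ell\mid q\}$, which does map to $\bP(V)\times\bA^{N-1}$, only reaches classes $f_*y$ for a finite flat cover $f$ of degree $r$ of the $r$-th envelope piece. This $f_*$ is not surjective: every class in its image has rank divisible by $r$. So your argument does not show that the pushforward of $1$ from the $r$-th piece lies in the ideal of your two relations. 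That containment is true in the end. But proving it directly is exactly the computation Edidin--Hu did by computer for $N=6$, and which the paper calls cumbersome for general $g$.

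\textbf{Step~4 has a related hole.} Unit leading coefficients in $\nu$ or $\sigma$, plus a suitable resultant in $\lambda$, give finite generation over $\bZ$. The absence of $p$-torsion is precisely what must be proved, and you give no mechanism for it. Counting common zeros gives the rank only if you also know the zeros are reduced.

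\textbf{How the paper avoids both issues.} It never touches the higher strata. Let $A_g=\bZ[\xi,\psi^{\pm1}]/(F_g,G_g)$ be the quotient by the two $r=1$ relations. In $R(T)=\bZ[\alpha^{\pm1},\beta^{\pm1}]$ it uses the factorizations
\[
G_g-\alpha F_g=\beta(1-\alpha^N)(1-\alpha^{N-1}\beta),\qquad G_g-\beta F_g=\alpha(1-\beta^N)(1-\alpha\beta^{N-1}).
\]
These filter $R(T)/(F_g,G_g)$ by quotients $R(T)/(1-\alpha^a\beta^b,1-\alpha^c\beta^d)$, each free of rank $|ad-bc|$. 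Hence $A_g$ is free over $\bZ$ of rank $N(2N-3)$.

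The paper then exhibits $N(2N-3)$ distinct points of $\Spec(A_g\otimes\overline{\bQ})$, so $A_g\otimes\overline{\bQ}$ is reduced. Each point $\{\alpha,\beta\}$ is realized by a matrix $\operatorname{diag}(\alpha,\beta)$ fixing one of the square-free forms $x_0^N+x_1^N$, $x_0^N+x_0x_1^{N-1}$, $x_0^{N-1}x_1+x_0x_1^{N-1}$. A class that dies in $K_0(\sX_g)$ restricts to zero in the representation ring of that form's stabilizer, so its character vanishes at every point. In a torsion-free ring with reduced finite $\overline{\bQ}$-fibre, such an element is zero. This gives injectivity of $A_g\to K_0(\sX_g)$ and, at the same time, the $\bZ$-freeness you were missing.

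The weight-zero ranks then follow from multiplication by $\psi$, which has weight $2$. For odd $g$ one also uses injectivity of multiplication by $\xi$ on odd weights, together with $A_g/(\xi)\cong\bZ[\psi]/(\psi^N-1)$.

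\textbf{A small correction to Step~3.} It is essentially the paper's argument, except that the even-genus generator is $\nu=\xi\psi^{g/2}$, of weight $1+g\equiv0$. The twist $V\otimes D^{-g/2}$ has weight $2$, not $0$.
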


We give a second presentation in terms of \emph{$K$-theoretic Chern classes}. The $K$-theoretic Chern classes $c_i^K(\mathcal E)$ are defined using exterior powers of $\mathcal E^\vee$ and are closely related to fundamental classes in $K$-theory, as recalled in \Cref{subsec:kchern}. For even $g$, let
\[
\delta_1:=c_1^K(\det\cV)=1-\lambda^{-1},
\qquad
\delta_2:=c_2^K(\cV)=1-\nu\lambda^{-1}+\lambda^{-1},
\]
and for odd $g$, let
\[
\tau_1:=c_1^K(\cL)=1-\lambda^{-1},
\qquad
\kappa_2:=c_2^K(\cE)=3-\sigma.
\]

\begin{theorem}\label{thm:introfc}
Let $g\ge2$ and work over a field of characteristic zero. For even $g$, there is an isomorphism
\begin{equation}\label{eq:K0-Hg-chern-class}
K_0(\sH_g)\cong
\frac{\bZ[\delta_1,\delta_2]}{\left(
\begin{aligned}
&\textcolor{RoyalPurple}{-(4g+2)\delta_1}+\sum\nolimits_{i=2}^{2g+2}(-1)^i\tbinom{2g+2}{i}\delta_1^i\\
&\quad+\sum\nolimits_{\substack{i,j\ge0,\ i+j\le4g\\i+2j\ge2}}(-1)^{i+j}\delta_1^i\delta_2^j
\Bigl[-\tbinom{i+j}{j}\tbinom{4g+j+1}{i+2j+1}+\sum\nolimits_{a=0}^i\tbinom{i-a+j}{j}
\bigl(\tbinom{g+1}{a}\tbinom{2g+j+1}{i-a+2j+1}
+\tbinom{g+2}{a}\tbinom{2g+j-1}{i-a+2j+1}\bigr)\Bigr],\\[2pt]
&\textcolor{RoyalPurple}{g(g-1)\delta_1^2-4g(g+1)\delta_2}
+\sum\nolimits_{i=3}^{2g+1}(-1)^i\tbinom{2g+1}{i}\delta_1^i
-\delta_2\sum\nolimits_{i=1}^{2g+1}(-1)^i\tbinom{2g+1}{i}\delta_1^i\\
&\quad+\sum\nolimits_{\substack{i,j\ge0,\ i+j\le4g\\i+2j\ge3}}(-1)^{i+j}\delta_1^i\delta_2^j
\Bigl[\tbinom{i+j}{j}\tbinom{4g+j}{i+2j}-\sum\nolimits_{a=0}^i\tbinom{i-a+j}{j}
\bigl(\tbinom{g+1}{a}\tbinom{2g+j}{i-a+2j}
+\tbinom{g+2}{a}\tbinom{2g+j-2}{i-a+2j}\bigr)\Bigr],\\[2pt]
&\textcolor{RoyalPurple}{(4g+2)\delta_1}+\sum\nolimits_{i=2}^{4g+2}(-1)^{i+1}\tbinom{4g+2}{i}\delta_1^i
\end{aligned}
\right)}.
\end{equation}
For odd $g$, we have
\begin{equation}\label{eq:odd-chern-binomial}
K_0(\sH_g)\cong
\frac{\bZ[\tau_1,\kappa_2]}{\left(
\begin{aligned}
&\textcolor{RoyalPurple}{-(8g+4)\tau_1}+\sum\nolimits_{i=2}^4(-1)^i\tbinom4i\tau_1^i
-\sum\nolimits_{j=1}^{2g}(-1)^j\bigl(\tbinom{2g+j+1}{2j+1}+\tbinom{2g+j}{2j+1}\bigr)\kappa_2^j\\
&\quad+\sum\nolimits_{\substack{0\le i\le2,\ 0\le j\le2g\\i+2j\ge2}}(-1)^{i+j}\tbinom2i
\bigl(\tbinom{g+j+1}{2j+1}+2\tbinom{g+j}{2j+1}+\tbinom{g+j-1}{2j+1}\bigr)\tau_1^i\kappa_2^j,\\[2pt]
&\textcolor{RoyalPurple}{8\tau_1^2-2g(g+1)\kappa_2}
+2\sum\nolimits_{i=3}^4(-1)^i\tbinom4i\tau_1^i
-\kappa_2\sum\nolimits_{i=1}^4(-1)^i\tbinom4i\tau_1^i+\sum\nolimits_{j=2}^{2g}(-1)^j\bigl(\tbinom{2g+j}{2j}+\tbinom{2g+j-1}{2j}\bigr)\kappa_2^j\\
&\quad-\sum\nolimits_{\substack{0\le i\le2,\ 0\le j\le2g\\i+2j\ge3}}(-1)^{i+j}\tbinom2i
\bigl(\tbinom{g+j}{2j}+2\tbinom{g+j-1}{2j}+\tbinom{g+j-2}{2j}\bigr)\tau_1^i\kappa_2^j,\\[2pt]
&\textcolor{RoyalPurple}{(8g+4)\tau_1}+\sum\nolimits_{i=2}^{8g+4}(-1)^{i+1}\tbinom{8g+4}{i}\tau_1^i
\end{aligned}
\right)}.
\end{equation}
\end{theorem}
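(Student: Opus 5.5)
Theorem~\ref{thm:introfc} follows from Theorem~\ref{thm:intromain} by a change of generators. The plan is to substitute the $K$-theoretic Chern classes into the presentation~\eqref{eq:K0-Hg-bundle-class} and rewrite each relation as a polynomial in the new generators.

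\textbf{Step 1: change of generators.} For even $g$, the definitions give $\lambda^{-1}=1-\delta_1$ and $\nu\lambda^{-1}=1+\lambda^{-1}-\delta_2=2-\delta_1-\delta_2$. Hence $\bZ[\nu,\lambda^{\pm1}]$ is the localization of $\bZ[\delta_1,\delta_2]$ at $1-\delta_1$. Likewise, for odd $g$, $\bZ[\sigma,\lambda^{\pm1}]$ is $\bZ[\tau_1,\kappa_2]$ localized at $1-\tau_1$, with $\sigma=3-\kappa_2$.

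\textbf{Step 2: remove the localization.} To present $K_0(\sH_g)$ as a quotient of the polynomial ring $\bZ[\delta_1,\delta_2]$ itself, add a relation forcing $1-\delta_1$ to be invertible. Since $\lambda$ generates $\Pic(\sH_g)$, which is cyclic of order $4g+2$ (even $g$) or $8g+4$ (odd $g$) by Remark~\ref{rem:pic}, we have $\lambda^{4g+2}=1$, resp.\ $\lambda^{8g+4}=1$. Rewriting this as $1-(1-\delta_1)^{4g+2}=0$, resp.\ $1-(1-\tau_1)^{8g+4}=0$, gives exactly the third relation in each presentation. The identity $\bigl(1-(1-\delta_1)^{n}\bigr)\big/\delta_1\cdot\delta_1=1-(1-\delta_1)^n$ shows that this relation makes $1-\delta_1$ invertible in the quotient, so the localization is unnecessary. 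I must check that $\lambda^{4g+2}=1$ (resp.\ $\lambda^{8g+4}=1$) actually holds in the $K_0$-ring and does not merely follow from the Picard group. This is automatic: $\lambda^{n}$ is the class of the line bundle $\cL^{\otimes n}$, and $\cL^{\otimes n}\cong\cO$ in $\Pic(\sH_g)$ for the stated $n$.

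\textbf{Step 3: rewrite the remaining relations.} Multiply each relation of Theorem~\ref{thm:intromain} by a suitable power of $\lambda^{-1}$ to clear positive powers of $\lambda$, then expand in $\delta$'s.
\begin{itemize}
\item[(a)] \emph{Even $g$.} Write $P_k(\nu,\lambda)\lambda^{-k}$ using the generating function $\sum_k P_k(X,Y)t^k=(1-Xt+Yt^2)^{-1}$. This gives $\lambda^{-k}P_k=P_k(\nu\lambda^{-1},\lambda^{-1})$. Substituting $\nu\lambda^{-1}=2-\delta_1-\delta_2$ and $\lambda^{-1}=1-\delta_1$ and expanding $\bigl(1-(2-\delta_1-\delta_2)t+(1-\delta_1)t^2\bigr)^{-1}$ by repeated binomial expansion produces double sums of the shape $\tbinom{i+j}{j}\tbinom{k+j+1}{i+2j+1}$. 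The leftover powers $\lambda^{-(g+1)}$ and $\lambda^{-(g+2)}$ contribute the convolution factors $\tbinom{g+1}{a}$ and $\tbinom{g+2}{a}$.
\item[(b)] \emph{Odd $g$.} The analogous generating function is $\sum_k Q_k(Z)t^k$, with $Z+1=4-\kappa_2$, and the $\lambda^2,\lambda^4$ factors become powers of $1-\tau_1$.
\end{itemize}

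\textbf{Step 4: match the stated form.} The relations produced in Step~3 must be matched with the stated ones, whose low-degree (purple) parts agree with the Chow relations~\eqref{eq:chow}. They probably arise as integer combinations of my rewritten relations together with the third relation. For example, the stated first even relation is likely my relation minus a multiple of the second, or a difference of the two original relations. To find the combination, I will compare leading terms in $\delta_1$ and reduce using the third relation.

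The main obstacle is the bookkeeping in Steps~3 and~4. This means verifying that the coefficient extraction yields precisely the displayed binomial sums, and identifying the correct integer combination of relations so that the two presentations generate the same ideal in $\bZ[\delta_1,\delta_2]$.
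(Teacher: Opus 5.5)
Your plan is the paper's own proof. You substitute $\lambda^{-1}=1-\delta_1$ and $\nu\lambda^{-1}=2-\delta_1-\delta_2$ (resp.\ $\lambda^{-1}=1-\tau_1$ and $\sigma=3-\kappa_2$), expand $P_k$ and $Q_k$ through their generating functions, and use $(1-\delta_1)^{4g+2}=1$ (resp.\ $(1-\tau_1)^{8g+4}=1$) to make $1-\delta_1$ (resp.\ $1-\tau_1$) a unit. The reason it is a unit is $(1-\delta_1)\cdot(1-\delta_1)^{4g+1}=1$ modulo the third relation, not the tautological identity you wrote.

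Write $\Phi_g,\Psi_g$ for the first two displayed polynomials. The combination you leave open in Step~4 is
$\Phi_g=\lambda^{-(2g+2)}F_g^{\mathrm{even}}$ and $\Psi_g=\lambda^{-(2g+2)}(G_g^{\mathrm{even}}-F_g^{\mathrm{even}})$ for even $g$, and
$\Phi_g=\lambda^{-4}F_g^{\mathrm{odd}}$ and $\Psi_g=\lambda^{-4}(G_g^{\mathrm{odd}}-2F_g^{\mathrm{odd}})$ for odd $g$.
These are exact identities in the polynomial ring. So the first relation is just the rescaled $F_g$, and no reduction by the third relation is needed. The displayed coefficients then come from $XP_k=P_{k+1}+YP_{k-1}$, $(\sigma-1)Q_{g-1}=Q_g+Q_{g-2}$ and Pascal's identity.
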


For $g=2$, we list the fully expanded presentations in \Cref{app:edidinhu}. The ring $K_0(\sH_g)$ is not graded, so each relation we find is inhomogeneous, in the form \[\textcolor{RoyalPurple}{\text{lowest degree part}} \quad + \quad \text{ higher order terms.}\] The lowest degree parts of \eqref{eq:K0-Hg-chern-class} and \eqref{eq:odd-chern-binomial} are colored in purple. We observe that each of these terms has the same form as a familiar relation in $CH^\bullet(\sH_g)$, as expected from the gamma-filtration (also recalled in \Cref{subsec:kchern}).

We remark that the presentations above require $\sH_g$ to be defined over characteristic zero. In characteristic $3$, for example, the invariant linear subspace $\Bbbk\cdot\{x_0^6,x_0^3x_1^3,x_1^6\}$ of binary sextics lies entirely in the discriminant, because $(ax_0^2+bx_0x_1+cx_1^2)^3=a^3x_0^6+b^3x_0^3x_1^3+c^3x_1^6$. Its Koszul resolution would yield an additional relation in $K_0(\sH_2)$, which is however not a relation in the above presentation even with rational coefficients (\Cref{rem:basefield}). The characteristic-$3$ example shows that the relations in rational $K_0$ can change with the characteristic. For comparison, Di~Lorenzo's presentation of $CH^\bullet(\sH_g)$ for odd $g$ holds over fields of characteristic different from $2$ \cite[Theorem~6.2]{DL21}.

\subsection{Proof outline} To compute $K_0(\sX_g)$ as a quotient of $R(\GL_2)$, we obtain two relations $F_g,G_g$ from the double-root locus by equivariant localization methods as used by Edidin--Hu \cite[Section~3.2]{EH24}. To prove that they generate the whole kernel, one might follow Edidin--Hu and use equivariant envelopes to exhaust all pushforwards from the discriminant. Since that computation turns out to be cumbersome for general $g$, we instead study the quotient $A_g:=R(\GL_2)/(F_g,G_g)$. We show that $A_g$ is free of finite rank over $\bZ$ and that $A_g\otimes\overline{\bQ}$ is reduced, so elements of $A_g$ are determined by their values at the points of $\Spec(A_g\otimes\overline{\bQ})$. Each such point is represented by the eigenvalues of an automorphism fixing a square-free binary form. Restriction to the stabilizer of that form shows that every element of $\ker(A_g\to K_0(\sX_g))$ vanishes at the point, and hence the kernel is zero (\Cref{thm:eqpres}).

Having an integral presentation of $K_0(\sX_g)$ brings us close to the finish line. The central $\mmu{g+1}\subset\GL_2$ induces a grading on $K_0(\sX_g)$ by its characters $\mmu{g+1}^\vee$ (\Cref{thm:isotypic}), and we can straightforwardly recover $K_0(\sH_g)$ as the weight-zero subring:
\[K_0(\sX_g)\cong \bigoplus_{\chi\in \mmu{g+1}^\vee} K_0(\sX_g)_\chi, \qquad K_0(\sX_g)_0 \cong K_0(\sH_g).\]
A bit of commutative algebra then yields the desired presentations.

We remark that the Chow ring admits no such decomposition technique in general. For example, $CH^1(\sB\bG_m)$ and $CH^1(\sB(\bG_m/\mmu n))$ are both infinite cyclic, but pullback along the quotient map sends a generator of the latter to $n$ times a generator of the former. For $n>1$, the image of pullback is therefore $n\bZ\subset\bZ$, which is not a direct summand because $\bZ/n\bZ$ is torsion.

To compute the rank of $K_0(\sH_g)$, let $\xi=[\cU]$ and $\psi=[\det\cU]$ for the rank-two bundle $\cU$ on $\sX_g$ associated to the standard representation of $\GL_2$. For even $g$, multiplication by the unit $\psi$ of $\mmu{g+1}$-weight two identifies summands whose weights differ by two and gives rank $8g+2$ in each summand $K_0(\sX_g)_j$. For odd $g$, multiplication by the weight-one class $\xi$ is injective on each odd-weight summand since $A_g$ is reduced, and each even-weight component of $A_g/(\xi)\cong\bZ[\psi]/(\psi^N-1)$ is free of rank four. Together with the total rank, this gives rank $8g$ in odd weights and $8g+4$ in even weights, the latter of which includes the weight-zero summand $K_0(\sH_g)$ (\Cref{prop:characterranks}).

For odd $g$, these ranks give a $K$-theoretic proof that the universal conic $\sC/\iota\to\sH_g$ is not the projectivization of a rank-two vector bundle, recovering the nonvanishing of its Brauer class \cite[Section~8]{DLP21}. Indeed, $K_0(\sC/\iota)$ has rank $16g+4$, whereas the projective bundle formula would give rank $16g+8$ (\Cref{cor:universalconic}).

\subsection{The gamma-filtration, Riemann--Roch, and $\operatorname{Spec}K_0(\sH_g)$} We move onto comparing $K_0(\sH_g)$ with the Chow ring through the gamma-filtration and through their prime spectra over $\Spec\bZ$. The gamma-filtration agrees with weighted degree in the Chern-class generators (\Cref{prop:gamma}), so the lowest-degree parts of the $K_0$-relations give the following quotients of the Chow ring.

\begin{theorem}\label{thm:introleading}
Let $g\ge2$ and work over a field of characteristic zero. There are surjective homomorphisms of graded rings
\[
\begin{aligned}
CH^\bullet(\sH_g)
&\twoheadrightarrow\operatorname{gr}_\gamma K_0(\sH_g)
&& (g\text{ even}),\\
CH^\bullet(\sH_g)/(c_3)
&\twoheadrightarrow\operatorname{gr}_\gamma K_0(\sH_g)
&& (g\text{ odd}),
\end{aligned}
\]
sending $c_1,c_2$ to the images of $\delta_1,\delta_2$ for even $g$ and $t,c_2$ to the images of $\tau_1,\kappa_2$ for odd $g$.
\end{theorem}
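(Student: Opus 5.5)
Given Theorem~\ref{thm:introfc}, the proof reduces to identifying the associated graded of the $\gamma$-filtration with the weighted-degree filtration on the Chern-class presentation. I use the result announced as \Cref{prop:gamma}: the $\gamma$-filtration on $K_0(\sH_g)$ coincides with the filtration induced by giving $\delta_1,\tau_1$ degree $1$ and $\delta_2,\kappa_2$ degree $2$. I would argue for this as follows. The elements $\delta_1,\delta_2$ (resp.\ $\tau_1,\kappa_2$) are $K$-theoretic Chern classes, so they lie in $F^1_\gamma$ and $F^2_\gamma$ respectively. Indeed, $c_i^K(\mathcal E)$ agrees up to sign and units with $\gamma^i$ of $\operatorname{rk}\mathcal E - [\mathcal E^\vee]$. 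Hence the weighted-degree filtration is contained in the $\gamma$-filtration. For the reverse containment, $F^n_\gamma$ is generated by products of $\gamma$-operations applied to rank-zero classes. Since $K_0(\sH_g)$ is generated as a $\lambda$-ring by $\nu,\lambda$ (resp.\ $\sigma,\lambda$), every such product can be rewritten as a polynomial in the Chern classes of degree $\ge n$. This uses the splitting principle and the fact that $\gamma$-operations of line bundles and of rank-2 and rank-3 bundles are polynomials in their $K$-Chern classes of the right weights.

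Granting this, $\operatorname{gr}_\gamma K_0(\sH_g)$ is the quotient of the graded polynomial ring $\bZ[\delta_1,\delta_2]$ (resp.\ $\bZ[\tau_1,\kappa_2]$) by the ideal of initial forms of the kernel $I$ of the presentation in Theorem~\ref{thm:introfc}. The initial form of an element means its lowest weighted-degree part. This ideal contains the initial forms of the listed generators, which are the purple terms: $2(2g+1)\delta_1$ and $g(g-1)\delta_1^2-4g(g+1)\delta_2$ for even $g$, and $4(2g+1)\tau_1$ and $8\tau_1^2-2g(g+1)\kappa_2$ for odd $g$. The third relation's initial form is redundant (it is $(8g+4)\tau_1$, or $(4g+2)\delta_1$, with sign). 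These are exactly the relations of \eqref{eq:chow}, with $c_3$ omitted for odd $g$.

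The map $CH^\bullet(\sH_g)\to\operatorname{gr}_\gamma$ is therefore defined by $c_1\mapsto\bar\delta_1$, $c_2\mapsto\bar\delta_2$ (resp.\ $t\mapsto\bar\tau_1$, $c_2\mapsto\bar\kappa_2$, $c_3\mapsto0$). It is well defined because each Chow relation maps to the initial form of an element of $I$, which vanishes in $\operatorname{gr}_\gamma$. It is surjective because $\operatorname{gr}_\gamma$ is generated by the images of the generators. It is graded by construction.

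The main obstacle is the reverse containment in \Cref{prop:gamma}. One must show that the $\gamma$-filtration is not strictly finer than the weighted-degree filtration, which requires controlling $\gamma$-operations on arbitrary classes rather than just on the generators. I would first try the $\lambda$-ring identity expressing $\gamma^k(x+y)$ and $\gamma^k(xy)$ via universal polynomials. I would then reduce to $\gamma^k(\nu-2)$, $\gamma^k(\lambda-1)$, etc. These vanish for $k$ above the rank and equal Chern-class monomials otherwise.
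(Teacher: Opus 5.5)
Your proposal is correct and follows the paper's proof: both use \Cref{prop:gamma} to see that $\operatorname{gr}_\gamma$ is generated by the classes of the Chern-class generators, and both read off the Chow relations as the lowest-degree parts of the relations in \Cref{thm:introfc}. The only difference is that the paper proves \Cref{prop:gamma} by quoting the known $\gamma$-filtrations of $R(\GL_2)$ and of $R(\bG_m\times\PGL_2)$ (via $R(\PGL_2)\cong R(\mathrm{SL}_2)$) and passing them through the surjection of $\lambda$-rings, rather than by your splitting-principle sketch; note also a small slip there, since the correct identity is $c_i^K(\cE)=(-1)^i\gamma^i([\cE^\vee]-\operatorname{rk}\cE)$ and $\gamma^i$ of the negated class differs (for a line bundle $L$, $\gamma^2(1-[L^\vee])=c_1^K(L)^2$ while $c_2^K(L)=0$), though either class lies in $F^i_\gamma$, so nothing downstream is affected.
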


For odd $g$, the equality $c_3^K(\cE)=0$ explains the vanishing of $c_3$ in the associated graded ring (\Cref{lem:cEchern}). Since additional lowest-degree relations can arise from combinations of these relations, these maps are only surjective in general.

Over an algebraically closed field of characteristic zero, the Riemann--Roch isomorphism $K_0(\sH_g)\otimes\overline{\bQ}\cong CH^\bullet(\sI\sH_g)\otimes\overline{\bQ}$ (\Cref{thm:inertiachern}) expresses the rank of $K_0(\sH_g)$ as the sum of the dimensions of the rational Chow rings of the inertia components. We make this precise in the following theorem.

\begin{theorem}\label{thm:introlengths}
Let $\mathscr X$ be a smooth connected separated Deligne--Mumford stack of finite type over an algebraically closed field of characteristic zero, with quasi-projective coarse moduli space and the resolution property. Assume that $K_0(\mathscr X)$ is free abelian of finite rank $r$. Then every geometric fiber of $\Spec K_0(\mathscr X)\to\Spec\bZ$ has length $r$, where
\[
r=\sum_{\mathscr Z\in\pi_0(\sI\mathscr X)}
\dim_{\bQ}\bigl(CH^\bullet(\mathscr Z)\otimes\bQ\bigr)
\ge\#\pi_0(\sI\mathscr X).
\]
Equality holds if and only if $K_0(\mathscr X)\otimes\bQ$ is reduced. In this case,
\[
CH^\bullet(\mathscr Z)\otimes\bQ\cong\bQ,
\]
for every connected component $\sZ \subseteq \sI\sX$, and $\Spec(K_0(\mathscr X)\otimes\overline{\bQ})$ has one reduced point corresponding to each of these inertia components. 
\end{theorem}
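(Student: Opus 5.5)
The plan is to identify $\Spec(K_0(\sX)\otimes\overline{\bQ})$ through the Riemann--Roch isomorphism recalled in \Cref{thm:inertiachern}, and then read off every claim by counting dimensions of Artinian $\overline{\bQ}$-algebras.

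\emph{Step 1: fiber lengths.} Since $K_0(\sX)\cong\bZ^r$ as an abelian group, for every field $k$ the geometric fiber is $\Spec(K_0(\sX)\otimes k)$. This is a finite $k$-algebra of dimension $r$, so its length is $r$. Flatness of $K_0(\sX)$ over $\bZ$ means the length is the same in every characteristic.

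\emph{Step 2: the sum formula.} Apply \Cref{thm:inertiachern}, which gives
\[
K_0(\sX)\otimes\overline{\bQ}\;\cong\;CH^\bullet(\sI\sX)\otimes\overline{\bQ}
=\prod_{\sZ\in\pi_0(\sI\sX)}CH^\bullet(\sZ)\otimes\overline{\bQ}.
\]
The hypotheses on $\sX$ (smooth, separated, Deligne--Mumford, finite type, quasi-projective coarse space, resolution property, characteristic zero) are the ones needed to invoke this theorem. The finiteness of $\pi_0(\sI\sX)$ follows because $\sI\sX\to\sX$ is finite. Comparing $\overline{\bQ}$-dimensions gives $r=\sum_{\sZ}\dim_{\bQ}CH^\bullet(\sZ)_{\bQ}$; in particular each summand is finite. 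Each component $\sZ$ is nonempty and connected, so $CH^0(\sZ)_{\bQ}=\bQ$ and each summand is at least $1$. This gives $r\ge\#\pi_0(\sI\sX)$.

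\emph{Step 3: the reducedness criterion.} For each $\sZ$, the ring $CH^\bullet(\sZ)_{\overline{\bQ}}$ is graded, finite-dimensional, and has degree-zero part $\overline{\bQ}$. Hence its positive-degree part $CH^{>0}$ is a nilpotent ideal and the ring is local Artinian. So $K_0(\sX)_{\overline{\bQ}}$ is a product of $\#\pi_0(\sI\sX)$ local Artinian algebras, and $\Spec$ has exactly one point per inertia component. The algebra is reduced if and only if every $CH^{>0}(\sZ)_{\overline{\bQ}}$ vanishes, that is, if and only if every $CH^\bullet(\sZ)_{\bQ}\cong\bQ$. This in turn is equivalent to equality $r=\#\pi_0(\sI\sX)$.

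To pass between $\bQ$ and $\overline{\bQ}$: a $\bQ$-algebra is reduced if and only if its base change to $\overline{\bQ}$ is reduced, since characteristic $0$ makes $\bQ$ perfect. Therefore reducedness of $K_0(\sX)\otimes\bQ$ matches the criterion above, and in the equality case the points of $\Spec(K_0(\sX)\otimes\overline{\bQ})$ are reduced and correspond bijectively to the inertia components.

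\emph{Main obstacle.} The real content is the Riemann--Roch isomorphism over $\overline{\bQ}$, together with checking that the isomorphism $\Phi$ of \Cref{thm:inertiachern} is a ring isomorphism onto the product ring. It is not the naive Chern character; it is twisted by eigenvalue decomposition and a Todd-type correction. I will take it from \Cref{thm:inertiachern} as stated. If only an additive isomorphism were available, I would have to fall back on dimension counting alone. Steps 1–2 survive in that case, but Step 3 needs the ring structure. The fallback there is to filter by the augmentation of each component and argue that the local factors of $K_0(\sX)_{\overline{\bQ}}$ match the inertia components via the support decomposition of the $\overline{\bQ}$-algebra.
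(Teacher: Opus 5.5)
Your proposal is correct and follows the paper's proof essentially step for step. Both arguments invoke the ring isomorphism of \Cref{thm:inertiachern}, note that each factor $CH^\bullet(\sZ)\otimes\overline{\bQ}$ is local Artinian with maximal ideal the positive-degree classes, compare dimensions to get the sum formula and the reducedness criterion, and use freeness over $\bZ$ for the fiber lengths. One small correction to your side remark: the isomorphism is To\"en's eigenvalue-twisted Chern character and involves no Todd correction, since the Todd class only enters Riemann--Roch for pushforwards, as the paper notes after \Cref{thm:inertiachern}.
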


For $\sH_g$, Pagani classified the inertia components and observed that each has rational Chow ring $\bQ$ \cite[Theorem~4.2 and Remark~5.2]{Pag12}. Applying \Cref{thm:introlengths} gives an independent explanation of the rank formula
\[
r_g=\#\pi_0(\sI\sH_g)
=\begin{cases}8g+2,&g\text{ even},\\8g+4,&g\text{ odd},
\end{cases}
\]
which is positive evidence that our presentations are correct. The odd-genus case has two more inertia components because $-1\in \mmu{g+1}$ exactly when $g$ is odd. In contrast to $\Spec K_0(\sH_g)$, whose geometric fibers all have length $r_g$, the fibers of $\Spec CH^\bullet(\sH_g)$ have length one over $\bQ$ and over $\bF_p$ for $p\nmid2g(g+1)(2g+1)$, and have positive dimension at every remaining prime (\Cref{prop:chowfibers}).

\paragraph{Question.} For a scheme or algebraic stack $\sX$, how can the prime ideals of $K_0(\sX)$ and $CH^\bullet(\sX)$ be interpreted in terms of the geometry of $\sX$?

We refine the comparison in \Cref{thm:introleading} by computing the associated graded ring after reduction modulo each prime. For $\Bbbk=\bQ$ or $\Bbbk=\bF_p$ with $p$ prime, let $\mathfrak m=\ker(\operatorname{rank}\colon K_0(\sH_g)\otimes \Bbbk\to \Bbbk)$, and let $F$ be the image of the gamma-filtration on $K_0(\sH_g)\otimes \Bbbk$. We prove that $\operatorname{gr}_F(K_0(\sH_g)\otimes \Bbbk)$ is a quotient of $CH^\bullet(\sH_g)\otimes \Bbbk$ of dimension $\dim_\Bbbk\bigl(K_0(\sH_g)\otimes \Bbbk\bigr)_{\mathfrak m}$, with $c_3$ mapping to zero for odd $g$ (\Cref{prop:augmentation}). For $\Bbbk=\bF_p$, we have
\[
\dim_{\bF_p}\bigl(K_0(\sH_g)\otimes\bF_p\bigr)_{\mathfrak m}=
\begin{cases}
p^{v_p(2g+1)}+\dfrac{p^{v_p(g(g+1))}-1}{2},&p\ne2,\\[5pt]
2^{v_2(g)+1}+2,&p=2,\ g\text{ even},\\[2pt]
2^{v_2(g+1)+1}+4,&p=2,\ g\text{ odd},
\end{cases}
\]
where $v_p(n)$ is the exponent of $p$ in $n$ (\Cref{prop:local-lengths}). Note that we are reducing modulo $p$ at the level of the ring $K_0(\sH_g)$, where $\sH_g$ is still defined over a field of characteristic zero. 

Along these lines, we give a complete description of $\operatorname{gr}_F(K_0(\sH_g)\otimes\bF_p)$ for every $g$ and $p$ (\Cref{thm:graded-prime-coefficients}). For odd $p$, it is either a truncated polynomial ring generated by a single Chern class, or $\bF_p$ alone. For $p=2$, both $\delta_1,\delta_2$ in even genus and $\tau_1,\kappa_2$ in odd genus are needed. For example,
\begin{equation}\label{eq:K0-H3-F2}
\operatorname{gr}_F\bigl(K_0(\sH_3)\otimes\bF_2\bigr)
\cong\bF_2[\tau_1,\kappa_2]/(\tau_1^4,\tau_1^2\kappa_2,\kappa_2^5).
\end{equation}
This is a quotient of $\bigl(CH^\bullet(\sH_3)/(c_3)\bigr)\otimes\bF_2\cong\bF_2[t,c_2]$ of length $12$, corresponding to the twelve points of $\Spec(K_0(\sH_3)\otimes\overline{\bQ})$ that specialize to the kernel of the rank homomorphism modulo $2$.

\paragraph{Structure of the paper}
The rest of this paper is organized as follows. In \Cref{sec:preliminaries}, we review techniques from equivariant $K$-theory, isotypic decomposition of $K_0$, the comparison between bundle classes and $K$-theoretic Chern classes, and Riemann--Roch for stacks. In \Cref{sec:equivariant}, we compute $K_0(\sX_g)$ by localization along the discriminant and prove that the relations $F_g,G_g$ generate the kernel of $R(\GL_2)\to K_0(\sX_g)$.  In \Cref{sec:khg}, we take the weight-zero subring and determine $K_0(\sH_g)$ (\Cref{thm:intromain,thm:introfc}) in both parities. We then study $\operatorname{Spec}K_0(\sH_g)\otimes \overline{\mathbb Q}$.  In \Cref{sec:inertia}, we prove \Cref{thm:introlengths} from Riemann--Roch and relate it to Pagani's classification of automorphisms of smooth hyperelliptic curves, recovering the rank of $K_0(\sH_g)$ as the number of inertia components. We then compare the Chow ring with the gamma-filtration and give presentations of $\operatorname{gr}_F(K_0(\sH_g)\otimes\bF_p)$ for every prime $p$. In \Cref{sec:future-directions}, we discuss potential applications of the technique of isotypic decomposition in $K_0$ as future directions. \Cref{app:expanded} contains expanded relations of $K_0(\sH_g)$ for small $g$.

\paragraph{Conventions}
Unless another characteristic is specified, the base field for $\sH_g$ has characteristic zero. We use integral coefficients for $K_0$-rings and Chow rings. Stacks and stacky loci are denoted by script capital letters, schemes by plain Latin capital letters, sheaves and bundles by calligraphic letters.  Classes in $K_0$-rings are lowercase Greek letters, Chow classes are lowercase Latin, and the lowercase Greek letters $\rho$, $\zeta$, $\eta$ also denote scalars in certain contexts.  By the \emph{$K_0$-ring} of a quotient stack $\mathscr X = [X/G]$, we mean the Grothendieck ring $K_0(\mathscr X)$ of perfect complexes, which agrees with the Grothendieck ring $K_0(X, G)$ of $G$-equivariant vector bundles on the smooth scheme $X$ \cite{Tho87}.

\paragraph{Acknowledgments}
The author thanks Hannah K. Larson, Jarod Alper, and Emma Bishop for many helpful conversations and encouragement.

\paragraph{AI disclosure}
The author developed most mathematical contributions and initial drafts prior to June 2026, independently of AI. The author used ChatGPT to find references. At later stages of this project, the author used OpenAI Codex 5.6 Sol and Astra 6 to proofread the manuscript and to check for errors and gaps. Astra 6 found no significant gaps but found short arguments for what has become \Cref{prop:masterflat} through \Cref{thm:eqpres} and suggested direction (4) in \Cref{sec:future-directions}. Codex was also used to generate Macaulay2 \cite{M2} code to confirm the presentations in small genera (\Cref{app:edidinhu}), some calcultions in \Cref{app:chernproofs}, and to create \Cref{fig:g3-specialization}. In accordance with the Leiden Declaration \cite{LeidenDeclaration2026}, the author retains full responsibility for the mathematical content, computational verifications, figures, and references.

\section{Preliminaries}
\label{sec:preliminaries}

We recall some facts about equivariant $K_0$-rings including the localization sequence and the equivariant localization formula. We then review the technique of isotypic decomposition for $K_0$ of a gerbe. To compare $K_0$ with Chow rings, we review $K$-theoretic Chern classes, the gamma-filtration, and Riemann--Roch for stacks. We conclude this section with some polynomial identities needed to express the relations in bundle classes.

\subsection{\texorpdfstring{Equivariant $K$-theory and localization}{Equivariant K-theory and localization}}

For a smooth stack $\sX$ with the resolution property \cite[Definition~7.2.9 and Proposition~7.2.10]{Alp26}, the Grothendieck ring of perfect complexes $K_0(\sX):= K_0(\mathsf{Perf}_{\sX})$ can be described in terms of vector bundles. As an abelian group, it is generated by their isomorphism classes $[\cE]$, subject to $[\cE]=[\cE']+[\cE'']$ for every short exact sequence $0\to\cE'\to\cE\to\cE''\to0$. Tensor product induces the structure of a unital ring on $K_0(\sX)$ with $[\cE][\cF]=[\cE\otimes\cF]$ and $1 = [\cO_{\sX}]$. Since pullback preserves exact sequences of vector bundles and tensor products, a morphism $f\colon\sX\to\sY$ of such stacks induces a ring homomorphism $f^*\colon K_0(\sY)\to K_0(\sX)$, sending $[\cE]$ to $[f^*\cE]$.

Let $G$ be an affine algebraic group acting on a smooth quasi-projective variety $X$ over $\Bbbk$. Vector bundles on the quotient stack $[X/G]$ correspond to $G$-equivariant vector bundles on $X$, which we denote by $\mathsf{VB}^G_{\sX}$ \cite[Section~7.1.2 and Exercise~7.1.11]{Alp26}. Defining $K_0(X,G)$ by the same relations for equivariant vector bundles therefore gives $K_0([X/G])\cong K_0(X,G):= K_0(\mathsf{VB}^G_{\sX})$. For $X=\Spec\Bbbk$, $G$-equivariant vector bundles on $\Spec \Bbbk$ are finite-dimensional $G$ representations, so $K_0(\sB G)$ is isomorphic to the representation ring $R(G)$, by definition. Pullback along $[X/G]\to\sB G$ makes $K_0(X,G)$ an $R(G)$-algebra, and restriction to a subgroup $H\subset G$ induces a ring homomorphism $K_0(X,G)\to K_0(X,H)$ \cite[Section~2.2.1]{Mer05}.

To define proper pushforward, we also use $\mathrm G_0(X,G)$, the Grothendieck group of $G$-equivariant coherent sheaves on a possibly singular $G$-scheme $X$, with relations coming from short exact sequences. When $X$ is smooth and quasi-projective, finite equivariant locally free resolutions give $K_0(X,G)\cong\mathrm G_0(X,G)$ \cite[Theorem~5.7]{Tho87}. A proper $G$-equivariant morphism $f\colon X\to Y$ induces $f_*\colon\mathrm G_0(X,G)\to\mathrm G_0(Y,G)$ by
\[
f_*[\cF]=\sum_{i\ge0}(-1)^i[R^if_*\cF].
\]
Tensoring with vector bundles makes $\mathrm G_0(X,G)$ a $K_0(X,G)$-module, and there is a  projection formula
\[
f_*(f^*[\cE]\cdot[\cF])=[\cE]\cdot f_*[\cF]
\]
for an equivariant vector bundle $\cE$ on $Y$ and an equivariant coherent sheaf $\cF$ on $X$.

\begin{theorem}[Localization sequence, {\cite[Theorem~2.7]{Tho87}}]\label{thm:localizationsequence}
Let $G$ be an affine algebraic group acting on a smooth quasi-projective variety $X$, and let $Z\subset X$ be a $G$-invariant closed subscheme with open complement $U$. The sequence
\begin{equation}\label{eq:localization}
\mathrm G_0(Z,G)\longrightarrow K_0(X,G)\longrightarrow K_0(U,G)\longrightarrow0
\end{equation}
is exact.
\end{theorem}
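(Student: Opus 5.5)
The plan is to reduce the equivariant statement to Quillen's localization sequence for $G$-theory of a quotient stack, since $G$-theory is naturally exact in the middle and on the right. Set $\sX=[X/G]$, $\sZ=[Z/G]$ and $\sU=[U/G]$, with closed immersion $i\colon\sZ\to\sX$ and open immersion $j\colon\sU\to\sX$. The abelian category $\mathsf{Coh}^G(X)$ of $G$-equivariant coherent sheaves on $X$ is the category of coherent sheaves on $\sX$. Restriction $j^*\colon\mathsf{Coh}^G(X)\to\mathsf{Coh}^G(U)$ is exact and essentially surjective up to the appropriate notion. Its kernel is the Serre subcategory of equivariant coherent sheaves set-theoretically supported on $Z$.

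\textbf{Step 1 (quotient of abelian categories).} I will show that $j^*$ induces an equivalence $\mathsf{Coh}^G(X)/\mathsf{Coh}^G_Z(X)\simeq\mathsf{Coh}^G(U)$. The key input is extension of coherent sheaves: every $G$-equivariant coherent sheaf $\cF$ on $U$ is the restriction of a $G$-equivariant coherent sheaf on $X$. To see this, I take the quasi-coherent pushforward $j_*\cF$, which is $G$-equivariant, and write it as the filtered union of its $G$-stable coherent subsheaves. Such subsheaves are exhaustive because $G$ is affine, so equivariant quasi-coherent sheaves are unions of equivariant coherent ones; this is the standard equivariant finiteness lemma, as in Thomason. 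Some coherent subsheaf $\cF'$ then satisfies $\cF'|_U=\cF$. Morphisms extend in the same way, up to modification on $Z$.

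\textbf{Step 2 (apply Quillen).} Quillen's localization theorem for a Serre subcategory gives a long exact sequence ending in $K_0(\mathsf{Coh}^G_Z(X))\to \mathrm G_0(X,G)\to \mathrm G_0(U,G)\to 0$.

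\textbf{Step 3 (dévissage).} Every sheaf supported on $Z$ has a finite $G$-stable filtration, by $\cI_Z^n\cF$, whose graded pieces are $\cO_Z$-modules. Dévissage then identifies $K_0(\mathsf{Coh}^G_Z(X))$ with $\mathrm G_0(Z,G)$, and under this identification the first map becomes $i_*$.

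\textbf{Step 4 (identify the terms).} Since $X$ and $U$ are smooth and quasi-projective, Thomason's resolution theorem \cite[Theorem~5.7]{Tho87} gives $\mathrm G_0(X,G)\cong K_0(X,G)$ and $\mathrm G_0(U,G)\cong K_0(U,G)$. These isomorphisms are compatible with restriction. Substituting them yields the sequence \eqref{eq:localization}.

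The main obstacle is Step 1, specifically extending equivariant coherent sheaves and morphisms across $Z$ while keeping the $G$-action. Everything else is formal. I would handle it with Thomason's result that equivariant quasi-coherent sheaves on a noetherian $G$-scheme are unions of equivariant coherent subsheaves. Surjectivity on $K_0$ needs only extension of objects. Exactness in the middle can be checked by hand instead of going through Quillen's theorem: if a class restricts to zero on $U$, write it as $[\cF]-[\cF']$ with the two restrictions equal in $K_0$ of $U$. The relations on $U$ come from short exact sequences there, which lift to $X$ up to sheaves supported on $Z$.
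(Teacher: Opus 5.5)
Your proposal is correct and matches the argument behind the paper's citation. The paper gives no proof beyond \cite[Theorem~2.7]{Tho87}, which is proved as you outline: Quillen's localization for the Serre subcategory of $Z$-supported equivariant coherent sheaves, with Thomason's lemma that equivariant quasi-coherent sheaves are unions of equivariant coherent subsheaves identifying the quotient with $\mathsf{Coh}^G(U)$, followed by d\'evissage; your last step, passing from $\mathrm G_0$ to $K_0$ on the smooth quasi-projective $X$ and $U$ via \cite[Theorem~5.7]{Tho87}, is exactly the identification the paper records just before the statement.
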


\begin{theorem}[Homotopy invariance {\cite[Theorem~4.1]{Tho87}}]\label{thm:homotopyinvariance}
Let $G$ be an affine algebraic group acting on a smooth quasi-projective variety $X$ and linearly on $\bA^n$. For the diagonal action on $X\times\bA^n$, the projection $p\colon X\times\bA^n\to X$ induces an isomorphism
\[
p^*\colon K_0(X,G)\xrightarrow{\cong}K_0(X\times\bA^n,G).
\]
\end{theorem}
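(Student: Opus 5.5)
The statement is Thomason's homotopy invariance, and I would prove it by reducing to the nonequivariant-style argument of filtering coherent sheaves, using the identification $K_0(X,G)\cong\mathrm G_0(X,G)$ for smooth quasi-projective $X$ and $X\times\bA^n$ from \cite[Theorem~5.7]{Tho87}. Since $p$ is flat, $p^*$ is defined on $\mathrm G_0$ and agrees with the vector-bundle pullback under this identification. So it suffices to show that $p^*\colon\mathrm G_0(X,G)\to\mathrm G_0(X\times\bA^n,G)$ is an isomorphism.

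\textbf{Injectivity.} I would use the zero section $s\colon X\to X\times\bA^n$, which is $G$-equivariant because the action on $\bA^n$ is linear. Since $X\times\bA^n$ is smooth, $s^*$ is defined on $K_0$, and $s^*p^*=(ps)^*=\mathrm{id}$. Hence $p^*$ is injective.

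\textbf{Surjectivity.} I would first reduce to $n=1$. Choose a $G$-stable flag in $\bA^n$; this exists after replacing $G$ by a Borel subgroup, but that replacement is not legitimate in general. A cleaner route is to treat $X\times\bA^n$ as the total space of the trivial equivariant vector bundle $V=X\times\bA^n$ and argue directly with the projective completion. Embed $V\subset\bP(V\oplus\cO)$, an equivariant projective bundle with complement $\bP(V)$ at infinity. Equivariant localization (\Cref{thm:localizationsequence}) gives a surjection
\[
K_0(\bP(V\oplus\cO),G)\twoheadrightarrow K_0(V,G).
\]
The equivariant projective bundle formula, which holds for $G$-equivariant vector bundles because it only uses the tautological line bundle $\cO(1)$ and the Beilinson/Koszul resolution of the diagonal, shows that the source is generated as a $K_0(X,G)$-module by $1,\xi,\dots,\xi^n$, where $\xi=[\cO(1)]$.

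Now restrict $\cO(1)$ to $V$. The section of $\cO(1)$ cutting out $\bP(V)$ corresponds to the $\cO$-coordinate, which is $G$-invariant. It is therefore an equivariant nowhere-vanishing section on $V$, so $\cO(1)|_V\cong\cO_V$ equivariantly. Consequently every $\xi^i$ restricts to $1$, and the image of $K_0(\bP(V\oplus\cO),G)$ in $K_0(V,G)$ equals the image of $p^*$. This gives surjectivity.

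\textbf{Main obstacle.} The delicate step is justifying the equivariant projective bundle formula and the localization surjectivity for the possibly non-reductive group $G$. Both require equivariant ample families and resolutions, and this is exactly where quasi-projectivity and the results of \cite{Tho87} are used. In the paper's setting I would simply invoke those, noting that linearity of the $G$-action ensures that $\bP(V\oplus\cO)$ is again a smooth quasi-projective $G$-variety.
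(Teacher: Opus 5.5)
Your argument is correct. The paper does not prove this statement; it simply quotes \cite[Theorem~4.1]{Tho87}. So your proposal is a genuinely different, self-contained route rather than a reconstruction of the paper's proof.

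You deduce the $K_0$-level statement from two other results the paper also imports from Thomason:
\begin{enumerate}
\item the localization sequence (\Cref{thm:localizationsequence}), applied to the hyperplane at infinity $\bP(V)\subset\bP(V\oplus\cO)\cong X\times\bP^n$;
\item the projective bundle formula (\Cref{thm:projectivebundle}).
\end{enumerate}
Surjectivity then rests on one observation. The projection $V\oplus\cO\to\cO$ is $G$-equivariant, so it gives an invariant section of $\cO(1)$ vanishing exactly on $\bP(V)$. This trivializes $\cO(1)$ equivariantly on the complement $V$, so every projective-bundle generator restricts into the image of $p^*$. Injectivity comes from the zero-section splitting $s^*p^*=\mathrm{id}$.

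There is no circularity. The standard proofs of localization, of the projective bundle formula, and of $K_0=\mathrm G_0$ on smooth schemes do not use homotopy invariance.

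The two routes buy different things. The citation gives Thomason's full spectrum-level statement, covering all higher equivariant $K$- and $G$-groups. Your argument gives a short proof using only tools already collected in \Cref{sec:preliminaries}. In it, smoothness is used essentially in the surjectivity of restriction to the open subset $X\times\bA^n$, via $K_0=\mathrm G_0$.

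Two small clean-ups:
\begin{enumerate}
\item The opening reduction to $\mathrm G_0$ and the abandoned flag/Borel reduction are never used and can be deleted.
\item The pullback $s^*$ is defined on Grothendieck groups of vector bundles without any smoothness hypothesis, so that remark is unnecessary.
\end{enumerate}
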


Let $X$ be a smooth quasi-projective $\GL_2$-variety and $T\subset\GL_2$ the diagonal torus. By Merkurjev's theorem \cite[Proposition~31]{Mer05}, restriction induces an isomorphism
\[
K_0(X,\GL_2)\otimes_{R(\GL_2)}R(T)\xrightarrow{\cong}K_0(X,T),
\]
with the Weyl group $\mathfrak S_2$ acting on the $R(T)$ factor by interchanging the classes $\alpha,\beta$ of the characters of weights $(1,0)$ and $(0,1)$. Since $R(T)$ is free over $R(\GL_2)$ with basis $1,\alpha$, restriction $K_0(X,\GL_2)\to K_0(X,T)$ is injective. If $K_0(X,\GL_2)$ has no $2$-torsion, restriction induces an isomorphism
\[
K_0(X,\GL_2)\cong K_0(X,T)^{\mathfrak S_2}.
\]
Indeed, the nontrivial Weyl element sends $u+\alpha v$ to $u+(\alpha+\beta)v-\alpha v$ for $u,v\in K_0(X,\GL_2)$. Since $\alpha+\beta$ is the restriction of the standard representation, comparison of the coefficients of $\alpha$ gives $2v=0$ and hence $v=0$.

For example, take $X=\Spec \Bbbk$. Let $\cU$ be the standard representation of $\GL_2$, and let $\xi=[\cU]$ and $\psi=[\det\cU]$. Restriction to $T$ gives
\begin{equation}\label{eq:representationring}
R(\GL_2)\cong\bZ[\alpha^{\pm1},\beta^{\pm1}]^{\mathfrak S_2}\cong\bZ[\xi,\psi^{\pm1}],
\qquad \xi=\alpha+\beta,\quad\psi=\alpha \beta.
\end{equation}

To evaluate pushforwards, we use the following form of equivariant localization.

\begin{theorem}[Equivariant localization formula, {\cite{Tho92,EH24}}]\label{prop:toruslocalization}
Let $T$ be a split torus acting on a smooth projective scheme $X$ whose fixed locus $X^T$ consists of finitely many rational points, and let $S\subset R(T)$ be the multiplicative set generated by the $K$-theoretic Euler classes of the nontrivial characters.  Then the pushforward $S^{-1}K_0(X^T, T)\to S^{-1}K_0(X, T)$ is an isomorphism, and for every $\beta\in K_0(X, T)$ and every proper $T$-equivariant map $\pi\colon X\to Y$ to a smooth projective scheme whose fixed locus also consists of finitely many rational points,
\[
\pi_*\beta \;=\; \sum_{x\in X^T}
\frac{\beta|_x}{\lambda_{-1}\bigl(\mathcal{N}_x^\vee X\bigr)}\,
\bigl[\cO_{\pi(x)}\bigr]_T
\qquad\text{in }S^{-1}K_0(Y,T),
\]
where $\lambda_{-1}(\mathcal{N}^\vee) = \sum_i(-1)^i[\wedge^i\mathcal{N}^\vee]$ and $[\cO_y]_T\in K_0(Y, T)$ is the class of the fixed point $y$. If, in addition, $K_0(X, T)$ is a free $R(T)$-module, then restriction to the fixed locus is injective on $K_0(X, T)$.
\end{theorem}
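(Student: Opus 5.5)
The plan is to derive everything from Thomason's concentration theorem together with the self-intersection formula for the inclusion $i\colon X^T\hookrightarrow X$.

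\textbf{Step 1: concentration.} Let $U=X\smallsetminus X^T$. Every point of $U$ has stabilizer a proper closed subgroup of $T$. Thomason's concentration theorem \cite{Tho92} then gives $S^{-1}\mathrm G_0(U,T)=0$. Concretely, each stratum of $U$ with a constant stabilizer $T'\subsetneq T$ has $\mathrm G_0$ annihilated by $1-\chi$ for any nontrivial character $\chi$ trivial on $T'$. A noetherian induction over finitely many such strata, using \Cref{thm:localizationsequence}, kills all of $\mathrm G_0(U,T)$ by a product of such elements.

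From the localization sequence $\mathrm G_0(X^T,T)\to K_0(X,T)\to K_0(U,T)\to 0$, after inverting $S$, we get that $i_*$ is surjective. Injectivity I would handle in Step 2; alternatively it follows from Thomason's full long exact sequence, where the $K_1(U)$ term also dies after localizing.

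\textbf{Step 2: self-intersection.} Since $X$ is smooth and $X^T$ is a finite set of rational points, $i$ is a regular embedding. Its normal bundle at $x$ is $T_xX$, and no weight of $T_xX$ is trivial because $x$ is an isolated fixed point. The Koszul resolution gives
\[
i^*i_*\beta=\lambda_{-1}(\mathcal N_x^\vee X)\cdot\beta
\quad\text{on the component }x .
\]
Here $\lambda_{-1}(\mathcal N_x^\vee X)=\prod(1-\chi_k^{-1})$ is a product of Euler classes of nontrivial characters, so it is a unit in $S^{-1}R(T)$. Hence $i_*$ is injective after localization, and therefore an isomorphism. Its inverse is
\[
\beta\longmapsto\sum_{x\in X^T}\frac{\beta|_x}{\lambda_{-1}(\mathcal N_x^\vee X)},
\]
since $K_0(X^T,T)=\bigoplus_x R(T)$. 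Thus $\beta=\sum_x i_{x*}\bigl(\beta|_x/\lambda_{-1}(\mathcal N^\vee_x X)\bigr)$ in $S^{-1}K_0(X,T)$.

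\textbf{Step 3: pushforward.} The map $\pi$ is equivariant, so it sends $X^T$ into $Y^T$. By functoriality of proper pushforward, $\pi_*i_{x*}=i_{\pi(x)*}$ on $R(T)$, and $i_{\pi(x)*}1=[\cO_{\pi(x)}]_T$. Applying $\pi_*$ (which is $S^{-1}R(T)$-linear by the projection formula) to the identity from Step 2 gives the stated formula in $S^{-1}K_0(Y,T)$.

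\textbf{Step 4: injectivity of restriction.} Suppose $K_0(X,T)$ is free over the domain $R(T)$. Since $S$ consists of nonzero elements, $K_0(X,T)\to S^{-1}K_0(X,T)$ is injective. The localized restriction $i^*$ equals the localized isomorphism $i_*$ followed by multiplication by the unit $\lambda_{-1}$ (the self-intersection formula of Step 2), so it is an isomorphism. The composite $K_0(X,T)\to S^{-1}K_0(X,T)\xrightarrow{\,i^*\,}S^{-1}K_0(X^T,T)$ is therefore injective, and it equals the localization of the restriction map. Hence restriction to the fixed locus is injective on $K_0(X,T)$.

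The main obstacle is Step 1. It is the genuine content of Thomason's theorem and requires the stratification by stabilizers; I would cite \cite{Tho92} for it rather than reprove it.
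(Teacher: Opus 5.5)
Your proposal is correct and follows the paper's route: Thomason's theorem for the localized isomorphism, the self-intersection formula $i^*i_*=\lambda_{-1}(\mathcal N^\vee)$ to invert $i_*$ as $i^*(-)/\lambda_{-1}$, functoriality of pushforward along $\pi$, and freeness to embed $K_0(X,T)$ into $S^{-1}K_0(X,T)$; the only difference is that you deduce injectivity of the localized $i_*$ from the self-intersection formula rather than citing Thomason for it. Two harmless slips: in your sketch of Step~1, a stratum with constant stabilizer is in general annihilated only by a power of $1-\chi$ (for $\bG_m$ scaling $\bA^2\smallsetminus0$ freely, $1-\chi$ acts as $1-[\cO_{\bP^1}(\pm1)]\neq0$ on $K_0(\bP^1)$), which suffices because $S$ is multiplicative; and in Step~4 the localized $i^*$ is $\lambda_{-1}\cdot i_*^{-1}$, not ``$i_*$ followed by multiplication by $\lambda_{-1}$''.
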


\begin{proof}
For the inclusion $i\colon X^T\hookrightarrow X$, Thomason's localization theorem \cite{Tho92} makes $i_*$ an isomorphism after localizing at $S$. By the self-intersection formula, as stated in Edidin--Hu \cite[Propositions~2.3--2.4]{EH24},
\[
\beta=i_*\left(\frac{i^*\beta}{\lambda_{-1}(\mathcal N^\vee_{X^T}X)}\right)
=\sum_{x\in X^T}\frac{\beta|_x}{\lambda_{-1}(\mathcal N_x^\vee X)}[\cO_x]_T.
\]
Applying $\pi_*$ and using $\pi_*[\cO_x]_T=[\cO_{\pi(x)}]_T$ gives the asserted formula, as in \cite[Section~3.2]{EH24}. The Euler classes in the denominator are units after localizing at $S$, so $i^*$ is also an isomorphism there. If $K_0(X,T)$ is free over $R(T)$, it embeds into $S^{-1}K_0(X,T)$, and hence $i^*$ is already injective on $K_0(X,T)$.
\end{proof}

\subsection{Isotypic decomposition of $K_0$ of a gerbe}

A morphism of stacks $\sX\to\sY$ is a gerbe if, for every scheme $U\to\sY$, the projection $\sX\times_{\sY}U\to U$ admits a section after an fppf cover of $U$, and any two sections are fppf locally isomorphic. A gerbe banded by a commutative group scheme $\mu$ over $\sY$, or a $\mu$-gerbe, is a gerbe with identifications of its relative automorphism sheaves with the pullbacks of $\mu$, compatible with pullback and isomorphisms of objects \cite[Definition~7.4.16]{Alp26}. For example, if a finite diagonalizable central subgroup $\mu$ of an affine group $G$ acts trivially on a $G$-scheme $X$, then $[X/G]\to[X/(G/\mu)]$ is a $\mu$-gerbe \cite[Proposition~7.4.55 and Exercise~7.4.58(b)]{Alp26}. Every $G$-equivariant sheaf on $X$ splits into $\mu$-eigensheaves, so $K_0(X,G)$ splits as a direct sum graded by characters of $\mu$, with weight-zero summand $K_0(X,G/\mu)$. The following is the equivariant form of the decomposition for banded gerbes \cite[Proposition~2.2.1.6]{Lie07}.

\begin{theorem}[Isotypic decomposition]\label{thm:isotypic}
Let $G$ be an affine algebraic group, let $\mu\subseteq Z(G)$ be a finite diagonalizable central subgroup with character group $\mu^\vee$, and let $X$ be a smooth $G$-scheme on which $\mu$ acts trivially.  Then:
\begin{enumerate}[leftmargin=2.4em]
\item every $G$-equivariant coherent sheaf $\mathcal{F}$ on $X$ decomposes canonically as $\mathcal{F} \cong  \bigoplus_{\chi\in\mu^\vee}\mathcal{F}_\chi$, where $\mu$ acts on $\mathcal{F}_\chi$ through $\chi$.
\item the induced decomposition $K_0(X, G) = \bigoplus_{\chi\in\mu^\vee}K_0(X, G)_\chi$ is a ring grading by $\mu^\vee$.
\item descent of sheaves of trivial $\mu$-character induces a ring isomorphism $K_0(X, G)_0 \cong K_0(X, G/\mu)$.
\end{enumerate}
\end{theorem}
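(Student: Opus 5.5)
The plan is to use the fact that a finite diagonalizable group scheme $\mu$ is linearly reductive, with a category of representations that is semisimple and graded by $\mu^\vee$. For (1), a $G$-equivariant coherent sheaf $\mathcal F$ on $X$ restricts to a $\mu$-equivariant sheaf. Since $\mu$ acts trivially on $X$, this is the same as a comodule structure $\mathcal F\to\mathcal F\otimes_{\Bbbk}\Bbbk[\mu]$ that is $\cO_X$-linear. Because $\Bbbk[\mu]=\Bbbk[\mu^\vee]$ is the group algebra of the character group, such a coaction is exactly a $\mu^\vee$-grading $\mathcal F=\bigoplus_\chi\mathcal F_\chi$ by $\cO_X$-submodules, where $\mathcal F_\chi$ is the preimage of $\mathcal F\otimes\chi$. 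Each $\mathcal F_\chi$ is coherent, being a direct summand. Because $\mu$ is central, the $G$-action commutes with the $\mu$-action, so each $\mathcal F_\chi$ is $G$-stable. I would verify this on the level of coactions: centrality means the action map $G\times\mu\to G$ is compatible with conjugation, so the $G$-linearization preserves the eigen-decomposition. Canonicity and functoriality come from the fact that every $G$-equivariant morphism commutes with the $\mu$-action and hence preserves eigensheaves.

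For (2), functoriality and exactness of $\mathcal F\mapsto\mathcal F_\chi$ show that a short exact sequence splits into short exact sequences of eigensheaves. Together with the identification $K_0(X,G)\cong\mathrm G_0(X,G)$, or working directly with vector bundles, this gives the additive decomposition $K_0(X,G)=\bigoplus_\chi K_0(X,G)_\chi$. Here $K_0(X,G)_\chi$ is generated by classes of sheaves of pure weight $\chi$, and every class lands in a unique summand. For multiplicativity, $\mu$ acts on $\mathcal E_\chi\otimes\mathcal F_{\chi'}$ through $\chi\chi'$ by the diagonal action, so the tensor product respects the grading. The unit $\cO_X$ has trivial weight.

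For (3), I would use that $X\to[X/G]\to[X/(G/\mu)]$, where the quotient map $G\to G/\mu$ is faithfully flat. Pulling back along $G\to G/\mu$ gives a functor from $G/\mu$-equivariant sheaves to $G$-equivariant sheaves with trivial $\mu$-weight. I would show this functor is an equivalence of exact tensor categories. Essential surjectivity is the step I expect to be the main obstacle. Given a $G$-linearization $\phi\colon a^*\mathcal F\cong p^*\mathcal F$ on $G\times X$ whose restriction to $\mu\times X$ is the identity, I need to descend it to $(G/\mu)\times X$. This is fppf descent along the $\mu$-torsor $G\times X\to (G/\mu)\times X$: $\phi$ is $\mu$-invariant by triviality of the weight and centrality, and the cocycle condition descends because $G\times G\to (G/\mu)\times(G/\mu)$ is also faithfully flat. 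Full faithfulness follows similarly by descent of morphisms. Since this equivalence is exact and monoidal, it induces a ring isomorphism $K_0(X,G/\mu)\cong K_0(X,G)_0$. Alternatively, one can cite the gerbe statement \cite[Proposition~2.2.1.6]{Lie07} for $[X/G]\to[X/(G/\mu)]$, since pullback of sheaves identifies $\mathsf{Coh}$ of the base with weight-zero sheaves on the gerbe.
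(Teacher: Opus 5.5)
Your proposal is correct, but it takes a genuinely different route from the paper.

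\textbf{The paper's route.} The paper verifies nothing directly. It views $[X/G]\to[X/(G/\mu)]$ as a $\mu$-gerbe and cites two results of Bergh--Schn\"urer:
\begin{enumerate}
\item \cite[Theorem~4.7]{BS21}, that the character decomposition of sheaves on a banded gerbe preserves coherent sheaves and vector bundles;
\item \cite[Proposition~4.10]{BS21}, that sheaves of trivial character are exactly pullbacks from the base.
\end{enumerate}
The grading and the isomorphism on $K_0$ then follow formally.

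\textbf{Your route.} You prove both inputs for the quotient presentation itself:
\begin{enumerate}
\item $\cO_X$-linear $\Bbbk[\mu^\vee]$-comodules are the same as $\mu^\vee$-graded sheaves, which gives the decomposition;
\item centrality makes the eigensheaves $G$-stable;
\item fppf descent along the $\mu$-torsor $G\times X\to(G/\mu)\times X$ identifies $G/\mu$-linearized sheaves with $G$-linearized sheaves of weight zero, as an exact tensor equivalence.
\end{enumerate}

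\textbf{Two steps that need one more line each.}
\begin{enumerate}
\item For $G$-stability, your sentence about ``compatibility with conjugation'' is vague. The clean statement is that the cocycle condition together with $gh=hg$ gives $\phi_g\circ g^*\phi_h=\phi_h\circ\phi_g$ for $h\in\mu$. This says each $\phi_g$ intertwines the $\mu$-actions and so preserves eigensheaves.
\item For descent, note that $a$ and $p$ are invariant under $\mu$ acting on the $G$ factor, because $\mu$ acts trivially on $X$. Hence they factor through the fppf quotient $(G/\mu)\times X$. So $a^*\mathcal F$ and $p^*\mathcal F$ are pullbacks, and your invariance $\phi_{gh}=\phi_g$ (from the cocycle and $\phi_h=\mathrm{id}$) is exactly the descent condition for $\phi$.
\end{enumerate}

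\textbf{Comparison.} Your argument is self-contained and shows exactly where centrality and the triviality of the $\mu$-action on $X$ enter. The paper's citation is shorter and applies to arbitrary banded gerbes, not only those presented as $[X/G]\to[X/(G/\mu)]$.
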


\begin{proof}
By \cite[Theorem~4.7]{BS21}, the character decomposition of sheaves on the $\mu$-gerbe $[X/G]\to[X/(G/\mu)]$ preserves coherent sheaves and vector bundles. Taking tensor products amounts to adding characters. It therefore induces a $\mu^\vee$-grading on $K_0(X,G)$, and descent for sheaves of trivial character gives the pullback isomorphism $K_0(X,G/\mu)\cong K_0(X,G)_0$ \cite[Proposition~4.10]{BS21}.
\end{proof}

To compute $K_0(X,G/\mu)$ from a graded presentation of $K_0(X,G)$, we use the following lemma.

\begin{lemma}\label{lem:gradedextraction}
Let $A = \bigoplus_\chi A_\chi$ be a ring graded by a finite abelian group and $I\subseteq A$ a graded ideal.  Then $(A/I)_0 \cong  A_0/(I\cap A_0)$, and if $I$ is generated by homogeneous elements $a_i$ of weights $\chi_i$ then $I\cap A_0 = \sum_i a_i\,A_{-\chi_i}$.
\end{lemma}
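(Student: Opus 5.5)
The plan is to show that the map $A_0\to A/I$ has image $(A/I)_0$ and kernel $I\cap A_0$, and then to identify $I\cap A_0$ by taking weight-zero components. Throughout, graded means graded by the finite abelian group $\Gamma$ indexing the $\chi$, with $A_\chi A_{\chi'}\subseteq A_{\chi+\chi'}$.

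For the first claim, note that since $I$ is a graded ideal, $I=\bigoplus_\chi (I\cap A_\chi)$. Hence the quotient inherits a grading $A/I=\bigoplus_\chi A_\chi/(I\cap A_\chi)$; the sum is direct exactly because $I$ decomposes componentwise. Its weight-zero piece is $A_0/(I\cap A_0)$, which is the first isomorphism. It is induced by the composite $A_0\hookrightarrow A\twoheadrightarrow A/I$, so it is a ring isomorphism.

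For the second claim, suppose $I=(a_i)$ with $a_i\in A_{\chi_i}$. The inclusion $\sum_i a_iA_{-\chi_i}\subseteq I\cap A_0$ holds because each $a_ib$ with $b\in A_{-\chi_i}$ lies in $I$ and has weight $\chi_i-\chi_i=0$. For the reverse inclusion:
\begin{enumerate}
\item Write $x\in I\cap A_0$ as $x=\sum_i a_ib_i$ with $b_i\in A$.
\item Decompose each $b_i=\sum_\chi (b_i)_\chi$ into homogeneous components.
\item Project both sides onto the weight-zero component. Since $x$ is homogeneous of weight $0$, and $a_i(b_i)_\chi$ has weight $\chi_i+\chi$, only the terms with $\chi=-\chi_i$ survive.
\item This gives $x=\sum_i a_i(b_i)_{-\chi_i}$, which lies in $\sum_i a_iA_{-\chi_i}$.
\end{enumerate}

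The argument also works when there are infinitely many generators, since each $x$ involves only finitely many $a_i$. Neither finiteness of the group nor commutativity is needed beyond what is used above. There is no real obstacle. The only point requiring care is that the projection to the weight-zero component is additive and commutes with multiplication by a homogeneous element up to the appropriate shift of weight, which is exactly the grading axiom.
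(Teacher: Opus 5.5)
Your proof is correct and is the same degreewise argument the paper gives: the quotient grading yields $(A/I)_0\cong A_0/(I\cap A_0)$, and taking the weight-zero component of $x=\sum_i a_ib_i$ leaves exactly $\sum_i a_i(b_i)_{-\chi_i}$. Your closing aside overstates the generality slightly, since writing $x=\sum_i a_ib_i$ already uses commutativity; in a noncommutative ring the ideal generated by the $a_i$ is $\sum_i Aa_iA$.
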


\begin{proof}
Both statements hold degreewise.  An element of $I$ has weight-zero component $\sum_i a_ir_i$ with $r_i$ the $(-\chi_i)$-component of the coefficient of $a_i$.
\end{proof}

\subsection{\texorpdfstring{Bundle classes and $K$-theoretic Chern classes}{Bundle classes and K-theoretic Chern classes}}
\label{subsec:kchern}

We use two kinds of generators for $K_0$-rings: classes of vector bundles, which we call \emph{bundle classes}, and \emph{$K$-theoretic Chern classes}. On smooth quasi-projective schemes, the latter come from the algebraic orientation of periodic $K$-theory $K_0[\beta,\beta^{-1}]$ in the sense of Levine--Morel \cite[Example~1.1.5]{LM07}, specialized at $\beta=1$, and the exterior-power construction below also defines them on smooth quotient stacks. We compare the two kinds of classes and then recall the gamma-filtration, which we use to compare $K_0$-rings with Chow rings.

Let $\mathcal E$ be a vector bundle of rank $r$ on a smooth quotient stack.  Taking exterior powers induces the \emph{lambda-operations} $\lambda^j([\mathcal E])=[\wedge^j\mathcal E]$ and the \emph{total lambda polynomial} $\lambda_t([\mathcal E])=\sum_{j=0}^r[\wedge^j\mathcal E]t^j$. The \emph{Whitney sum formula} $\lambda_t(z+z')=\lambda_t(z)\lambda_t(z')$ extends $\lambda_t$ to all of $K_0$ by $\lambda_t(z-z')=\lambda_t(z)/\lambda_t(z')$. The associated gamma-operations $\gamma^j$ are defined by $\gamma_t(z)=\lambda_{t/(1-t)}(z)$. For more details, see \cite[Chapter~II, Section~4]{Wei13}. 

After pullback to the flag bundle $\operatorname{Fl}(\mathcal{E})$, over which $\cE$ acquires a filtration with line-bundle quotients, one has $[\mathcal E]=\alpha_1+\cdots+\alpha_r$, where the $\alpha_i$ are the classes of these quotients.  The \emph{splitting principle} is the statement that this pullback is injective on $K_0$, so identities proved in these line-bundle classes hold on the original stack.

The \emph{$K$-theoretic Chern roots} are $\beta_i:=1-\alpha_i^{-1}$.  The \emph{$K$-theoretic Chern polynomial} is defined as
\begin{equation}\label{eq:kchern-total}
c_t^K(\mathcal E):=\sum_{j=0}^r c_j^K(\mathcal E)t^j
=\prod_{i=1}^r(1+\beta_i t),
\qquad c_0^K(\mathcal E)=1,
\end{equation}
where the coefficients $c_j^K(\mathcal E)$ are the \emph{$K$-theoretic Chern classes}. They have the property that $c_j^K(\mathcal E)=0$ for $j>r$. The following formulas express these coefficients in exterior powers of $\mathcal E^\vee$, showing that they are well-defined on $K_0$ and do not depend on the chosen splitting.

\begin{lemma}[{compare \cite[Proposition~IV.7.4]{Kar78}}]\label{lem:kchern-exterior}
For a vector bundle $\mathcal E$ of rank $r$ on a smooth quotient stack,
\[
c_t^K(\mathcal E)
=(1+t)^r\lambda_{-t/(1+t)}([\mathcal E^\vee])
=\gamma_{-t}([\mathcal E^\vee]-r).
\]
For $0\le j\le r$, the coefficients satisfy
\begin{align*}
c_j^K(\mathcal E)
&=\sum_{i=0}^j(-1)^i\binom{r-i}{j-i}
[\wedge^i\mathcal E^\vee],\\
[\wedge^j\mathcal E^\vee]
&=\sum_{i=0}^j(-1)^i\binom{r-i}{j-i}
c_i^K(\mathcal E).
\end{align*}
In particular,
\[
c_1^K(\mathcal E)=r-[\mathcal E^\vee],
\qquad
[\det\mathcal E^\vee]=\sum_{j=0}^r(-1)^j c_j^K(\mathcal E),
\qquad
c_r^K(\mathcal E)=\lambda_{-1}([\mathcal E^\vee]).
\]
\end{lemma}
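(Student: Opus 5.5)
By the splitting principle we may pass to the flag bundle $\operatorname{Fl}(\mathcal E)$, where pullback is injective on $K_0$. There $[\mathcal E]=\alpha_1+\cdots+\alpha_r$ with $\alpha_i$ line-bundle classes, so all three expressions become identities between symmetric polynomials in $\alpha_1^{\pm1},\dots,\alpha_r^{\pm1}$. Since $[\mathcal E^\vee]=\sum_i\alpha_i^{-1}$ and $\lambda_t$ is multiplicative, we get $\lambda_s([\mathcal E^\vee])=\prod_i(1+\alpha_i^{-1}s)$. It therefore suffices to check each identity for a single line bundle class $\alpha$ (so $r=1$), and then take products.

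For the first equality, set $s=-t/(1+t)$. Then for a line bundle
\[
(1+t)\bigl(1-\alpha^{-1}t/(1+t)\bigr)=1+t-\alpha^{-1}t=1+(1-\alpha^{-1})t=1+\beta t .
\]
Multiplying over $i$ gives $c^K_t(\mathcal E)=(1+t)^r\lambda_{-t/(1+t)}([\mathcal E^\vee])$.

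For the gamma form, we use $\gamma_u(z)=\lambda_{u/(1-u)}(z)$, together with $\gamma_u(1)=\lambda_{u/(1-u)}(1)=1/(1-u)$. This gives $\gamma_u(z-r)=(1-u)^r\lambda_{u/(1-u)}(z)$. Setting $u=-t$ yields $(1+t)^r\lambda_{-t/(1+t)}(z)$, which is what we need. The only subtlety here is working with power series in $t$ rather than polynomials; this is harmless because the extended $\lambda_t$ lands in $1+tK_0[[t]]$.

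The coefficient formulas come from expanding
\[
(1+t)^r\sum_i[\wedge^i\mathcal E^\vee](-t)^i(1+t)^{-i}=\sum_i(-1)^i[\wedge^i\mathcal E^\vee]\,t^i(1+t)^{r-i}.
\]
The coefficient of $t^j$ is $\sum_i(-1)^i\binom{r-i}{j-i}[\wedge^i\mathcal E^\vee]$.

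For the inverse formula, substitute $t=-u/(1+u)$, so that $1+t=1/(1+u)$. The first identity then inverts to
\[
\lambda_u([\mathcal E^\vee])=(1+u)^r\,c^K_{-u/(1+u)}(\mathcal E),
\]
which has exactly the same shape. Expanding it the same way gives $[\wedge^j\mathcal E^\vee]=\sum_i(-1)^i\binom{r-i}{j-i}c^K_i(\mathcal E)$. I expect this inversion step, with its sign and binomial bookkeeping, to be the only place needing care. One can also confirm it by checking $r=1$ directly: $c^K_1=1-\alpha^{-1}$ and $[\wedge^1\mathcal E^\vee]=\alpha^{-1}=1-c^K_1$.

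The special cases then follow immediately:
\begin{itemize}
\item For $j=1$: $c^K_1=r-[\mathcal E^\vee]$.
\item For $j=r$ in the inverse formula: $\binom{r-i}{r-i}=1$ gives $[\det\mathcal E^\vee]=\sum_j(-1)^jc^K_j$.
\item For $j=r$ in the forward formula: $c^K_r=\sum_i(-1)^i[\wedge^i\mathcal E^\vee]=\lambda_{-1}([\mathcal E^\vee])$.
\end{itemize}

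Finally, $c^K_j$ vanishes for $j>r$ because $\prod_{i=1}^r(1+\beta_it)$ has degree $r$ in $t$. Independence of the chosen splitting is automatic, since the right-hand sides are defined directly on $K_0$ of the original stack.
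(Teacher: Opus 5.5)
Your proposal is correct and takes essentially the same route as the paper: you verify the factor identity $(1+t)\bigl(1-\alpha_i^{-1}t/(1+t)\bigr)=1+\beta_i t$ via the splitting principle, obtain the gamma form from $\lambda_z(r)=(1+z)^r$, read off $t^j$-coefficients from $\sum_i(-1)^i[\wedge^i\mathcal E^\vee]t^i(1+t)^{r-i}$, and invert using that $t\mapsto -t/(1+t)$ is an involution before specializing to $j=1$ and $j=r$. The one loose remark is that every identity can be checked for a single line bundle and then multiplied; this holds for the generating-function identity, while the coefficient formulas come from the expansion, exactly as you then carry out.
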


\begin{proof}
In the line-bundle classes, each factor of the first expression is $(1+t)(1-\alpha_i^{-1}t/(1+t))=1+\beta_i t$. The equality with $\gamma_{-t}$ follows from its definition and $\lambda_z(r)=(1+z)^r$.  Expanding gives
\[
c_t^K(\mathcal E)=\sum_{i=0}^r
(-1)^i[\wedge^i\mathcal E^\vee]t^i(1+t)^{r-i},
\]
whose coefficient of $t^j$ is the first formula.  Since $t\mapsto -t/(1+t)$ is an involution, the first identity also gives $\lambda_t([\mathcal E^\vee])=(1+t)^r c_{-t/(1+t)}^K(\mathcal E)$, and the same coefficient calculation gives the second formula.  Taking $j=1$ and $j=r$ gives the remaining identities.
\end{proof}

\begin{theorem}[Projective bundle formula {\cite[Theorem~3.1]{Tho87}}]\label{thm:projectivebundle}
Let $\mathcal E$ be a vector bundle of rank $r\ge1$ on a smooth quotient stack $\mathscr X$, and let $\bP(\mathcal E)$ parametrize lines in $\mathcal E$. With $\ell=[\cO_{\bP(\mathcal E)}(-1)]$ and $\zeta=c_1^K(\cO_{\bP(\mathcal E)}(1))=1-\ell$, pullback induces isomorphisms
\begin{equation}\label{eq:projectivebundle}
K_0(\bP(\mathcal E))\cong\frac{K_0(\mathscr X)[\ell]}{\bigl(\sum_{i=0}^r(-1)^i[\wedge^i\mathcal E^\vee]\ell^i\bigr)}\cong\frac{K_0(\mathscr X)[\zeta]}{\bigl(\sum_{i=0}^r(-1)^ic_i^K(\mathcal E^\vee)\zeta^{r-i}\bigr)}.
\end{equation}
This is a free $K_0(\mathscr X)$-module with basis $1,\ell,\ldots,\ell^{r-1}$, or equivalently $1,\zeta,\ldots,\zeta^{r-1}$.
\end{theorem}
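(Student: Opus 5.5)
The statement to prove is the projective bundle formula (\Cref{thm:projectivebundle}), which the paper cites from Thomason. My plan is to deduce it from the scheme case by working equivariantly, then translate between the two presentations using \Cref{lem:kchern-exterior}.

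\textbf{Reduction to the equivariant setting.} Write $\mathscr X=[X/G]$ with $X$ smooth and $\cE$ a $G$-equivariant bundle. Then $\bP(\cE)=[\bP_X(\cE)/G]$, and the tautological subbundle $\cO(-1)\subset p^*\cE$ is $G$-equivariant.

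\textbf{The relation holds.} On $\bP(\cE)$ there is the tautological quotient $p^*\cE/\cO(-1)$, which is a bundle of rank $r-1$. It follows that
\[
\lambda_t(p^*[\cE]-\ell)=\lambda_t(p^*\cE)(1+\ell t)^{-1}
\]
is a polynomial of degree $r-1$. Hence its coefficient of $t^r$ vanishes, which gives $\sum_i(-1)^i[\wedge^i\cE]\ell^{r-i}=0$. Multiplying by the unit $\ell^{-r}$ and applying the identity $[\wedge^i\cE]\cdot[\det\cE^\vee]=[\wedge^{r-i}\cE^\vee]$ to reindex, we obtain $\sum_i(-1)^i[\wedge^i\cE^\vee]\ell^i=0$ up to a unit. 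So the relation in the first presentation holds in $K_0(\bP(\cE))$, and the natural map from the quotient ring is well defined.

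\textbf{Freeness with basis $1,\ell,\dots,\ell^{r-1}$.} This is the main obstacle, since it is a genuine theorem. I would follow Quillen's argument, adapted equivariantly as in Thomason.
\begin{itemize}[leftmargin=2.4em]
\item For surjectivity, use Beilinson/Quillen resolution. Every equivariant coherent sheaf $\cF$ on $\bP(\cE)$ is, after twisting by $\cO(n)$ for $n\gg0$, $p$-globally generated with vanishing higher direct images. Regularity then gives a finite equivariant resolution whose terms are of the form $p^*\cF_i\otimes\cO(-i)$ for $0\le i<r$, with $\cF_i=R^0p_*(\cdots)$.
\item For injectivity, use the pairing with the functors $\cF\mapsto Rp_*(\cF\otimes\cO(j))$ for $0\le j<r$. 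Since $Rp_*\cO(i-j)$ equals $\cO$ when $i=j$ and vanishes when $-r<i-j<0$, the resulting matrix is unitriangular.
\end{itemize}
These steps are $G$-equivariant because $p_*$ commutes with the $G$-action. Smoothness identifies $K_0$ with $\mathrm G_0$ (\cite[Theorem~5.7]{Tho87}), and the resolution property lets us work with vector bundles throughout. Once freeness is established, the monic degree-$r$ relation in $\ell$ generates the entire kernel, by comparing ranks of free $K_0(\mathscr X)$-modules.

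\textbf{The $\zeta$-presentation.} Substitute $\ell=1-\zeta$ and use \Cref{lem:kchern-exterior} for $\cE^\vee$. By the splitting principle, the relation becomes
\[
\lambda_{-\ell}(\cE^\vee)=\prod_i(1-\gamma_i\ell),
\]
where $\gamma_i$ are the roots of $\cE^\vee$. Multiplying by the unit $\prod_i\gamma_i^{-1}$ and using $\ell=1-\zeta$, this rewrites as $\prod_i(\zeta-\beta_i)$, up to a unit, with $\beta_i=1-\gamma_i^{-1}$ the $K$-theoretic Chern roots of $\cE^\vee$. Expanding gives $\sum_i(-1)^ic_i^K(\cE^\vee)\zeta^{r-i}$. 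Since this polynomial is monic in $\zeta$, the elements $1,\zeta,\dots,\zeta^{r-1}$ form a basis by a unitriangular change from the basis in $\ell$.
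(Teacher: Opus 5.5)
Your proposal is correct, and it is essentially the argument behind the paper's citation. The paper gives no proof beyond invoking \cite[Theorem~3.1]{Tho87}, which is Quillen's regularity-resolution plus unitriangular-pairing argument carried out $G$-equivariantly, and your passage to the $\zeta$-presentation via $[\det\cE]\,\lambda_{-\ell}([\cE^\vee])=\prod_i(\zeta-\beta_i)$ is the routine translation the paper leaves implicit. The only slips are cosmetic: the $\ell$-relation is monic only up to its unit leading coefficient $(-1)^r[\det\cE^\vee]$ (which is also what makes the span of $1,\ell,\dots,\ell^{r-1}$ stable under multiplication by $\ell$, as needed to untwist $[\cF]=\ell^n[\cF(n)]$ in the surjectivity step), and the pairing produces $Rp_*\cO(j-i)$ rather than $Rp_*\cO(i-j)$, which does not affect unitriangularity.
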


The top Chern class $c_r^K(\mathcal E)=\lambda_{-1}([\mathcal E^\vee])$ is the \emph{$K$-theoretic Euler class}. If $\mathcal E$ has a regular section with zero locus $i\colon \sZ\hookrightarrow\mathscr X$, its Koszul resolution gives
\[
i_*[\cO_\sZ]=\sum_{j=0}^r(-1)^j[\wedge^j\mathcal E^\vee]
=c_r^K(\mathcal E)
\quad\text{in }K_0(\mathscr X).
\]
Thus the Euler class represents the \emph{$K$-theoretic fundamental class} of $\sZ$ in the ambient stack. In particular, an effective Cartier divisor $\sD$ satisfies $[\cO_\sD]=c_1^K(\cO_{\mathscr X}(\sD))=1-[\cO_{\mathscr X}(-\sD)]$.

For a rank-two bundle, \Cref{lem:kchern-exterior} gives
\begin{equation}\label{eq:kchern-ranktwo}
\begin{aligned}
[\det\mathcal E^\vee]
&=1-c_1^K(\mathcal E)+c_2^K(\mathcal E),\\
[\mathcal E]
&=\frac{2-c_1^K(\mathcal E)}
{1-c_1^K(\mathcal E)+c_2^K(\mathcal E)},\\
c_1^K(\det\mathcal E)
&=c_1^K(\mathcal E)-c_2^K(\mathcal E).
\end{aligned}
\end{equation}
Here, the denominator is the unit $[\det\mathcal E^\vee]$, and the second equality also uses $\mathcal E\cong\mathcal E^\vee\otimes\det\mathcal E$.

For the comparison with the Chow ring, we use the $\gamma$-filtration, whose $j$-th term contains the $j$-th $K$-theoretic Chern class of every vector bundle.  Let $\sX$ be a connected smooth quotient stack, and let $\operatorname{rk}\colon K_0(\sX)\to\bZ$ be the rank map. The \emph{$\gamma$-filtration} \cite[Chapter~III, Section~1]{FL85} starts with $F_\gamma^0K_0(\sX)=K_0(\sX)$ and $F_\gamma^1K_0(\sX)=\ker(\operatorname{rk})$. For $n\ge2$, let $F_\gamma^nK_0(\sX)$ be the ideal generated by products
\[
\gamma^{i_1}(z_1)\cdots\gamma^{i_m}(z_m),
\qquad
z_j\in\ker(\operatorname{rk}),\quad
i_j\ge1,\quad i_1+\cdots+i_m\ge n.
\]
It gives a decreasing filtration with
\[
F_\gamma^aK_0(\sX)\,F_\gamma^bK_0(\sX)
\subseteq F_\gamma^{a+b}K_0(\sX)
\qquad(a,b\ge0).
\]
For a vector bundle $\mathcal E$ of rank $r$, \Cref{lem:kchern-exterior} gives
\begin{equation}\label{eq:kchern-gamma}
c_j^K(\mathcal E)
=(-1)^j\gamma^j([\mathcal E^\vee]-r)
\in F_\gamma^jK_0(\sX).
\end{equation}
Consequently, a product of Chern classes of total degree at least $n$ belongs to $F_\gamma^nK_0(\sX)$.

\subsection{Riemann--Roch for stacks}\label{subsec:riemannroch}

The Riemann--Roch theorem gives another important comparison map between the rational $K_0$-ring of a stack and the rational Chow ring of its inertia stack. To describe this map, let $\mathscr X$ be a smooth separated Deligne--Mumford stack of finite type over an algebraically closed field of characteristic zero. The inertia stack $\sI\mathscr X$ parametrizes pairs $(x,h)$, where $x$ is an object of $\mathscr X$ and $h$ is an automorphism of $x$, and the projection $p\colon\sI\mathscr X\to\mathscr X$ forgets $h$ \cite[Definition~4.2.10]{Alp26}. The universal automorphism of $p$ acts as $h$ at the pair $(x,h)$.

For a vector bundle $\mathcal E$ on $\mathscr X$, let $\zeta$ run over the eigenvalues of $h$ on $\mathcal E_x$ as $(x,h)$ varies in $\sI\mathscr X$, and let $(p^*\mathcal E)_\zeta$ denote the eigenbundle with eigenvalue $\zeta$. Since $h$ has finite order, each $\zeta$ is a root of unity in the base field. To\"en's \emph{Chern character} \cite[Definition~4.5]{Toe99} is given by
\[
\operatorname{Ch}([\mathcal E])
=\sum_\zeta\zeta\,\operatorname{ch}\bigl((p^*\mathcal E)_\zeta\bigr)
\quad\text{in }CH^\bullet(\sI\mathscr X)\otimes\overline{\bQ}.
\]
On the component of $\sI\sX$ corresponding to the identity automorphism, this agrees with the ordinary Chern character. Combining To\"en's comparison with the \'etale $K$-theory of the inertia stack \cite[Theorem~3.15]{Toe99} with the rational Chern character gives the following result. 

\begin{theorem}[Riemann--Roch for DM stacks, \cite{Toe99}]\label{thm:inertiachern}
Let $\mathscr X$ be a smooth separated Deligne--Mumford stack of finite type over an algebraically closed field of characteristic zero, with quasi-projective coarse moduli space and the resolution property. The Chern character induces an isomorphism of $\overline{\bQ}$-algebras
\begin{equation}\label{eq:inertiachern}
\operatorname{Ch}\colon K_0(\mathscr X)\otimes\overline{\bQ}
\xrightarrow{\cong}
CH^\bullet(\sI\mathscr X)\otimes\overline{\bQ}
\cong
\prod_{\mathscr Z\in\pi_0(\sI\mathscr X)}
\bigl(CH^\bullet(\mathscr Z)\otimes\overline{\bQ}\bigr).
\end{equation}
\end{theorem}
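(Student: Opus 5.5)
The plan is to assemble \Cref{thm:inertiachern} from two comparison isomorphisms in To\"en's work, composed in order. The first passes from $K_0(\mathscr X)\otimes\overline{\bQ}$ to a suitable $K$-theory of the inertia stack. The second is a rational Chern character on $\sI\mathscr X$, after which we decompose over connected components. I would first use the hypotheses to put $\mathscr X$ in a convenient form. The resolution property together with a quasi-projective coarse moduli space lets us write $\mathscr X\cong[Y/G]$ with $Y$ quasi-projective and $G$ linear. Then $K_0(\mathscr X)$ is the Grothendieck group of vector bundles, agrees with $\mathrm G_0$ by smoothness \cite{Tho87}, and To\"en's hypotheses are satisfied. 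The inertia stack is again a smooth separated Deligne--Mumford stack of finite type, since it is a disjoint union of fixed loci of finite-order elements, which are smooth in characteristic zero. Its finitely many connected components give the product decomposition on the right of \eqref{eq:inertiachern}.

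Next I would set up the \emph{twisted} map. For a vector bundle $\mathcal E$ on $\mathscr X$, the pullback $p^*\mathcal E$ carries the universal automorphism, which has finite order. Since the base field is algebraically closed of characteristic zero, $p^*\mathcal E$ splits canonically into eigenbundles $(p^*\mathcal E)_\zeta$ with $\zeta$ roots of unity in the field. The assignment $[\mathcal E]\mapsto\sum_\zeta\zeta[(p^*\mathcal E)_\zeta]$ respects short exact sequences because the eigen-decomposition is exact. It respects tensor products because eigenvalues multiply, so it gives a ring map $K_0(\mathscr X)\otimes\overline{\bQ}\to K_0(\sI\mathscr X)\otimes\overline{\bQ}$. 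The identification of the cyclotomic inertia stack with $\sI\mathscr X$ is harmless here, because all roots of unity already lie in the base field. To\"en's theorem \cite[Theorem~3.15]{Toe99} says this map becomes an isomorphism onto the \'etale, or ``representable'', $K$-theory of the inertia stack after tensoring with $\overline{\bQ}$. I would simply invoke that result rather than reprove it.

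The final step composes with the ordinary Chern character on $\sI\mathscr X$. That is an isomorphism from this rational $K$-theory to $CH^\bullet(\sI\mathscr X)\otimes\bQ$ by Riemann--Roch for the coarse space: rational Chow groups of a DM stack agree with those of its coarse space, and $K$-theory there is compared via the usual $\operatorname{ch}$. The composite is exactly the formula defining $\operatorname{Ch}$. On the identity component it visibly reduces to $\operatorname{ch}$. Multiplicativity follows from that of each factor.

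The main obstacle is in this last step. Naive $K_0(\sI\mathscr X)\otimes\bQ$ is \emph{not} isomorphic to $CH^\bullet(\sI\mathscr X)\otimes\bQ$ when components have nontrivial generic stabilizers, because the inertia of the inertia would contribute again. So one must check carefully that the target of To\"en's Theorem~3.15 is the version of $K$-theory for which the Chern character to rational Chow is an isomorphism. I would handle this by following To\"en's Definition~4.5, where $\operatorname{Ch}$ is defined as the composite landing directly in $CH^\bullet(\sI\mathscr X)\otimes\overline{\bQ}$. I would then cite his Riemann--Roch statement for that composite, rather than factoring through $K_0(\sI\mathscr X)$.
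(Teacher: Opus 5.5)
Your proposal is correct and follows the same route as the paper, which also gets the isomorphism by composing To\"en's comparison \cite[Theorem~3.15]{Toe99} between $K_0(\mathscr X)\otimes\overline{\bQ}$ and the \'etale $K$-theory of $\sI\mathscr X$ with the rational Chern character, with $\operatorname{Ch}$ as in his Definition~4.5. You are right that the intermediate target must be To\"en's \'etale $K$-theory and not the naive $K_0(\sI\mathscr X)\otimes\bQ$, which is too large, as $\sB G$ for a finite group $G$ already shows; citing the composite directly, as you propose, is exactly what the paper does.
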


Note that the Todd class is irrelevant in our situation, since we are not computing pushforwards. 

\subsection{\texorpdfstring{The polynomials $P_k$ and $Q_k$}{The polynomials Pk and Qk}} \label{subsec:polynomial-families}

We derive recurrence relations and generating functions for $P_k$ and $Q_k$, which express symmetric powers and irreducible $\PGL_2$-representations in terms of bundle classes.

\begin{lemma}\label{lem:family}
The polynomials $P_k(X,Y)$ and $Q_k(Z)$ satisfy the following identities in $\bZ[X,Y]$ and $\bZ[Z]$, where $X,Y,Z$ are formal variables.
\begin{enumerate}[leftmargin=2.4em]
\item $P_0 = 1$, $P_1 = X$, $Q_0 = 1$, $Q_1 = Z$, and for $k\ge1$,
\[
P_{k+1} \;=\; X\,P_k - Y\,P_{k-1},
\qquad
Q_{k+1} \;=\; (Z-1)\,Q_k - Q_{k-1}.
\]
\item The generating functions are
\[
\sum_{k\ge0}P_k(X, Y)\,t^k \;=\; \frac1{1 - Xt + Yt^2},
\qquad
\sum_{k\ge0}Q_k(Z)\,t^k \;=\; \frac{1+t}{1 - (Z-1)t + t^2}.
\]
\item Under the substitution $X = \alpha+\beta$, $Y = \alpha \beta$, which embeds $\bZ[X, Y]$ as the symmetric subring of $\bZ[\alpha, \beta]$, $P_k$ becomes the complete homogeneous symmetric polynomial of degree $k$,
\begin{equation}\label{eq:Pclosed}
P_k(\alpha+\beta,\ \alpha \beta) \;=\; \frac{\alpha^{\,k+1}-\beta^{\,k+1}}{\alpha-\beta}
\;=\; \sum_{i=0}^{k}\alpha^{\,i}\beta^{\,k-i}
\qquad (k\ge0),
\end{equation}
and under the substitution $Z = \rho + 1 + \rho^{-1}$, $Q_k$ becomes the symmetric Laurent polynomial
\begin{equation}\label{eq:Qclosed}
Q_k(\rho + 1 + \rho^{-1}) \;=\;
\rho^{\,k} + \rho^{\,k-1} + \dots + \rho^{-k}
\;=\; \frac{\rho^{\,k+1}-\rho^{-k}}{\rho-1}
\qquad (k\ge0, \rho \neq 1).
\end{equation}
\end{enumerate}
\end{lemma}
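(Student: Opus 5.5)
The plan is to prove the three parts in the order (1), (2), (3). Each later part follows from the earlier ones by elementary manipulations in $\bZ[X,Y]$, $\bZ[Z]$, or in $\bZ[\alpha,\beta]$ and the Laurent ring $\bZ[\rho^{\pm1}]$.

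For (1), the initial values come directly from the defining sums \eqref{eq:famdef}. For $P$, I would compare the coefficients of $Y^jX^{k+1-2j}$ on the two sides of $P_{k+1}=XP_k-YP_{k-1}$. This reduces the recurrence to Pascal's rule
\[
\tbinom{k+1-j}{j}=\tbinom{k-j}{j}+\tbinom{k-j}{j-1},
\]
with the usual care at the boundary index $j=\lfloor (k+1)/2\rfloor$, where the convention $\binom{n}{m}=0$ for $m>n$ handles the edge terms.

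For $Q$, a direct coefficient comparison in powers of $Z+1$ is awkward. Instead I would derive the generating function for $Q$ first and read off the recurrence from it:
\begin{itemize}
\item Set $W=Z+1$. Writing $m=k-j$, we get $\sum_k Q_k t^k=\sum_{m,j}(-1)^j\binom{m+j}{j}W^m t^{m+j}$.
\item The inner sum over $j$ is $(1+t)^{-m-1}$.
\item Hence $\sum_k Q_kt^k=\frac{1}{1+t}\sum_m \bigl(\tfrac{Wt}{1+t}\bigr)^m=\frac{1}{1+t-Wt}$.
\end{itemize}
Multiplying numerator and denominator by $1+t$ gives $\frac{1+t}{(1+t)^2-Wt(1+t)}$, which is not immediately the stated form. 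So I would double-check the index. With $\binom{2k-j}{j}$ we have $m=k-j$ and $2k-j=2m+j$, so the coefficient is $\binom{2m+j}{j}$ and the inner sum is $\sum_j(-1)^j\binom{2m+j}{j}t^j=(1+t)^{-2m-1}$. This gives
\[
\sum_k Q_k t^k=\frac{1}{1+t}\cdot\frac{1}{1-\frac{Wt}{(1+t)^2}}=\frac{1+t}{(1+t)^2-Wt}=\frac{1+t}{1-(Z-1)t+t^2}.
\]
The recurrence $Q_{k+1}=(Z-1)Q_k-Q_{k-1}$ then follows by clearing the denominator. The numerator $1+t$ only affects the coefficients of $t^0$ and $t^1$, so it is consistent with $Q_0=1$ and $Q_1=Z$.

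For (2), the $P$ generating function follows from the recurrence together with $P_0=1$ and $P_1=X$. Alternatively, one can expand $(1-t(X-Yt))^{-1}$ binomially, which reproduces the defining sum directly.

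For (3), substitute $X=\alpha+\beta$ and $Y=\alpha\beta$. Then $1-Xt+Yt^2=(1-\alpha t)(1-\beta t)$, and expanding $\frac{1}{(1-\alpha t)(1-\beta t)}$ gives $\sum_k h_k(\alpha,\beta)t^k$. Here $h_k(\alpha,\beta)=\sum_{i=0}^k\alpha^i\beta^{k-i}$, which equals $(\alpha^{k+1}-\beta^{k+1})/(\alpha-\beta)$ in the fraction field.

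Similarly, substitute $Z=\rho+1+\rho^{-1}$. Then $1-(Z-1)t+t^2=(1-\rho t)(1-\rho^{-1}t)$. It remains to check that
\[
\frac{1+t}{(1-\rho t)(1-\rho^{-1}t)}=\sum_k(\rho^k+\cdots+\rho^{-k})t^k.
\]
The cleanest route is to note that $R_k=\rho^k+\cdots+\rho^{-k}$ satisfies $R_0=1$, $R_1=\rho+1+\rho^{-1}$ and the same three-term recurrence as $Q_k$. The recurrence for $R_k$ holds because $R_k=h_{2k}(\rho^{1/2},\rho^{-1/2})$, or it can be checked directly by telescoping. Uniqueness of solutions to the recurrence then gives $Q_k=R_k$. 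The closed form $(\rho^{k+1}-\rho^{-k})/(\rho-1)$ is the geometric series sum for $R_k$.

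I expect no step to be a real obstacle. The only place needing care is the binomial bookkeeping in the $Q_k$ generating function, specifically getting the exponent $-(2m+1)$ from the coefficient $\binom{2m+j}{j}$ rather than mis-indexing the sum.
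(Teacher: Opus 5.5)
Your proof is correct, but it handles $Q_k$ by a different route than the paper.

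The paper derives every statement about $Q_k$ from the corresponding statement for $P_k$:
\begin{itemize}
\item directly from \eqref{eq:famdef}, $Q_k(Z)=P_{2k}(X,1)$ under $X^2=Z+1$;
\item the $P$-recurrence at $Y=1$ gives $P_{2k+2}=(X^2-2)P_{2k}-P_{2k-2}$, which is the $Q$-recurrence;
\item the generating function then follows from the recurrence and $Q_0=1$, $Q_1=Z$;
\item \eqref{eq:Qclosed} follows from $Q_k(\alpha^2+1+\alpha^{-2})=P_{2k}(\alpha+\alpha^{-1},1)$ and \eqref{eq:Pclosed} with $\rho=\alpha^2$.
\end{itemize}
For $P_k$, the paper also runs in the opposite direction from you. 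It computes the generating function straight from the defining sum, using $\sum_m\binom{m+j}{j}(Xt)^m=(1-Xt)^{-j-1}$, and reads off the recurrence, rather than proving the recurrence by Pascal's rule.

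Your argument treats $Q_k$ independently. One binomial-series computation gives its generating function from the definition, and the closed form comes from uniqueness for the three-term recurrence, with no square root of $\rho$ needed. The paper's reduction is shorter and more conceptual. It mirrors the fact that the $(2k+1)$-dimensional irreducible $\mathrm{PGL}_2$-representation is $\mathrm{Sym}^{2k}$ of the standard $\mathrm{GL}_2$-representation twisted by $\det^{-k}$, i.e.\ $P_{2k}(\xi,\psi)=\psi^kQ_k(\sigma)$, and the paper reuses this identity in the odd-genus computation. Your route is self-contained but requires more delicate index bookkeeping.

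When you write this up, delete your first bulleted computation. It uses $\binom{m+j}{j}$ where the coefficient is $\binom{2k-j}{j}=\binom{2m+j}{j}$, and it asserts the false identity $\sum_kQ_kt^k=1/(1+t-Wt)$. Keep only the corrected computation that follows it.
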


\begin{proof}
The values in (1) are the cases $k\le1$ of \Cref{eq:famdef}. For the generating function of $P$, sum \Cref{eq:famdef} against $t^k$, substitute $m = k - 2j$, and apply the binomial series:
\[
\sum_{k\ge0}P_k(X, Y)\,t^k
\;=\; \sum_{j\ge0}(-Yt^2)^j
\sum_{m\ge0}\binom{m+j}{j}(Xt)^m
\;=\; \sum_{j\ge0}\frac{(-Yt^2)^j}{(1-Xt)^{\,j+1}}
\;=\; \frac1{1 - Xt + Yt^2}.
\]
The recurrence for $P$ follows by multiplying by $1-Xt+Yt^2$, while the factorization $1-(\alpha+\beta)t+\alpha \beta t^2=(1-\alpha t)(1-\beta t)$ gives \Cref{eq:Pclosed}.

The definitions in \Cref{eq:famdef} also give
\[
Q_k(Z)=P_{2k}(X,1)
\qquad\text{under}\quad X^2=Z+1.
\]
At $Y=1$, the recurrence for $P$ implies $P_{2k+2}=(X^2-2)P_{2k}-P_{2k-2}$ for $k\ge1$, hence the recurrence for $Q$. Together with $Q_0=1$ and $Q_1=Z$, this gives its generating function. Finally, \Cref{eq:Pclosed} yields
\[
Q_k(\alpha^2+1+\alpha^{-2})=P_{2k}(\alpha+\alpha^{-1},1)=\sum_{j=-k}^k \alpha^{2j},
\]
which proves \Cref{eq:Qclosed} on substituting $\rho=\alpha^2$.
\end{proof}

\section{\texorpdfstring{The $K_0$-ring of the $\mmu{g+1}$-gerbe $\sX_g$}{The K0-ring of the mu(g+1)-gerbe Xg}}
\label{sec:equivariant}

In this section, we give a presentation of $K_0(\sX_g)$ by obtaining two relations from the double-root locus and proving that they generate the kernel of $R(\GL_2)\to K_0(\sX_g)$. We prove that the proposed quotient is finite free over $\bZ$ and compute its rank, then exhibit that many distinct points over $\overline{\bQ}$ to establish reducedness. Each point comes from an automorphism of a square-free form. Restriction to stabilizers shows that every element of $\ker(A_g\to K_0(\sX_g))$ vanishes at every point of $\Spec(A_g\otimes\overline{\bQ})$, and hence is zero.

We use the binary-form quotient presentation \Cref{eq:Hg-quotient-presentation}, with $N=2g+2$ and the standard action\footnote{For even $g$, \cite[Corollary~4.6(b)]{AV04} rewrote \eqref{eq:Hg-quotient-presentation} as a $\GL_2$-quotient with the twisted action $A\cdot f=\det(A)^g f(A^{-1}x)$. We keep the untwisted action and take the quotient by $\GL_2$ itself, which gives the gerbe $\sX_g$.}
\begin{equation}\label{eq:action}
A\cdot f=f(A^{-1}x).
\end{equation}
As explained in the introduction, the central $\mmu{g+1}$ acts trivially, and the quotient map $\sX_g\to\sH_g$ is a gerbe banded by $\mmu{g+1}$.\footnote{Quotienting by the full kernel $\mmu{2g+2}$ of the action would instead give the rigidification of $\sH_g$ by removing the $\mmu{2}$ coming from the hyperelliptic involution.} \Cref{thm:isotypic} identifies
\begin{equation}\label{eq:extraction}
K_0(\sH_g) \;\cong \; K_0(\sX_g)_0 ,
\end{equation}
the weight-zero subring for the $\mmu{g+1}$-character grading, so our strategy is to first compute the full ring $K_0(\sX_g)$. With $\xi$ and $\psi$ as in \eqref{eq:representationring}, so that $R(\GL_2)\cong \bZ[\xi,\psi^{\pm1}]$, let $\theta := [\cO_{\bP^N}(1)]$ with its linearization induced by the representation $\Sym^N\cU^\vee$.  The $\mmu{g+1}$-weights are
\begin{equation}\label{eq:weights}
\operatorname{wt}(\xi) = 1,
\qquad
\operatorname{wt}(\psi) = 2,
\qquad
\operatorname{wt}(\theta) = 0
\pmod{g+1}.
\end{equation}

\subsection{The relations $F_g,G_g$}
\label{subsec:devissage}
The localization sequence for $\Delta\subset\bA^{N+1}$ gives a surjection $R(\GL_2)\to K_0(\sX_g)$ whose kernel is the image of $\mathrm G_0(\Delta,\GL_2)\to R(\GL_2)$. We obtain two elements of this kernel from the proper map
\begin{equation}\label{eq:squaremaps}
\pi_1\colon\bP^1\times\bP^{N-2}\longrightarrow\bP^N,
\qquad (\ell,u)\longmapsto\ell^2u,
\end{equation}
whose image is the projective discriminant $\Delta_1\subset\bP^N$ parametrizing binary forms of degree $N$ with a repeated root. Let $\varphi_1=[\cO_{\bP^1\times\bP^{N-2}}(1,0)]$. We compute $\pi_{1*}(1)$ and $\pi_{1*}(\varphi_1)$ by equivariant localization and pull them back to the affine complement to obtain the relations.

The maps $\bP^r\times\bP^{N-2r}\to\Delta_1$, $(q,u)\mapsto q^2u$, for $1\le r\le g+1$ form an equivariant envelope of $\Delta_1$ \cites[Proposition~4.1]{EF09}[Proposition~2.4]{AP15}. For $N=6$, Edidin--Hu used this envelope and verified computationally that, after passage to the affine complement, all its pushforwards lie in the ideal generated by the two classes from $\pi_1$ \cite[Lemma~4.2 and Section~4.1]{EH24}. We instead prove directly that $F_g,G_g$ generate the entire kernel over $\bZ$ for every $g$.

We carry out the pushforward calculation using equivariant localization as in Edidin--Hu \cite[Sections~3.1--3.2]{EH24}. For $s\ge1$, let $\bP^s = \bP(\Sym^s\cU^\vee)$ be the projective space of binary forms of degree $s$, so that the coordinate of $x_0^{s-i}x_1^i$ carries the $T$-character $\alpha^{-(s-i)}\beta^{-i}$. The fixed points of $T$ are the coordinate points $z_0, \dots, z_s$, the class $[\cO_{\bP^s}(1)]$ restricts to $\alpha^{s-j}\beta^j$ at $z_j$, and, with $\theta = [\cO_{\bP^s}(1)]$,
\begin{equation}\label{eq:pointclass}
[\cO_{z_j}]_T \;=\; \prod_{i\neq j}\bigl(1 -
\alpha^{s-i}\beta^i\,\theta^{-1}\bigr),
\qquad
\lambda_{-1}\bigl(\mathcal{N}^\vee_{z_j}\bP^s\bigr) \;=\;
\prod_{i\neq j}\Bigl(1 - \frac{\alpha^{s-i}\beta^i}{\alpha^{s-j}\beta^j}\Bigr).
\end{equation}

\begin{theorem}\label{thm:closedforms}
In $K_0(\bP^N, T)$, for every $N\ge4$,
\begin{equation}\label{eq:closedforms}
\begin{aligned}
\pi_{1*}(1) \;&=\;
1 + \xi\psi\,P_{N-3}\,\theta^{-1} - \psi^2P_{2N-4}\,\theta^{-2},\\
\pi_{1*}(\varphi_1) \;&=\;
\xi + \xi\psi^2P_{N-4}\,\theta^{-1} - \psi^3P_{2N-5}\,\theta^{-2}.
\end{aligned}
\end{equation}
Both classes are symmetric in $\alpha, \beta$ and hence lie in $K_0(\bP^N, \GL_2)$.
\end{theorem}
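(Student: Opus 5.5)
The plan is to compute both pushforwards by the equivariant localization formula (\Cref{prop:toruslocalization}) and then identify the result with the closed forms by restricting to the $T$-fixed points of $\bP^N$. Since $K_0(\bP^N,T)\cong R(T)[\theta]/\prod_{i=0}^N(1-\alpha^{N-i}\beta^i\theta^{-1})$ is free over $R(T)$ by \Cref{thm:projectivebundle}, the last part of \Cref{prop:toruslocalization} makes restriction to $(\bP^N)^T=\{z_0,\dots,z_N\}$ injective. It therefore suffices to show that, for each $j$, the restriction of $\pi_{1*}(1)$ (resp.\ $\pi_{1*}(\varphi_1)$) to $z_j$ equals the right-hand side of \eqref{eq:closedforms} evaluated at $\theta=\alpha^{N-j}\beta^j$.

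\textbf{Fixed points and the pushforward at $z_j$.} The fixed points of $\bP^1\times\bP^{N-2}$ are the pairs $(z_a,z_b)$ with $a\in\{0,1\}$ and $0\le b\le N-2$. Under $(\ell,u)\mapsto\ell^2u$ the monomials multiply, so $(z_a,z_b)$ maps to $z_{2a+b}$. Hence the fixed points over $z_j$ are $(z_0,z_j)$ when $j\le N-2$ and $(z_1,z_{j-2})$ when $j\ge2$. Restricting the localization formula to $z_j$ and using $[\cO_{z_j}]_T|_{z_j}=\lambda_{-1}(\mathcal N^\vee_{z_j}\bP^N)$ gives
\[
\pi_{1*}(\beta')|_{z_j}=\sum_{x\mapsto z_j}\beta'|_x\,
\frac{\lambda_{-1}(\mathcal N^\vee_{z_j}\bP^N)}{\lambda_{-1}(\mathcal N^\vee_x(\bP^1\times\bP^{N-2}))}.
\]
All the ingredients are explicit products from \eqref{eq:pointclass}. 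The class $\varphi_1$ restricts to $\alpha$ at $z_0\in\bP^1$ and to $\beta$ at $z_1$. The tangent weights at $x$ are $N-1$ of the characters appearing among the $N$ tangent weights at $z_j$, up to inversion, so each quotient collapses to a short ratio. Writing $u=\beta/\alpha$, the weights at $z_j\in\bP^N$ are $u^{i-j}$ for $i\neq j$; those at $(z_0,z_j)$ are $u^{\pm1}$ together with $u^{i-j}$ for $i\neq j$, $0\le i\le N-2$. After cancellation, each summand becomes a ratio of at most two or three binomials $(1-u^m)$.

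\textbf{Matching with the closed form.} Next I would substitute $\theta=\alpha^{N-j}\beta^j$ into the claimed formula, using \eqref{eq:Pclosed} to write $P_k=(\alpha^{k+1}-\beta^{k+1})/(\alpha-\beta)$. This yields, for instance,
\[
\xi\psi P_{N-3}\theta^{-1}=\frac{(\alpha+\beta)\,\alpha\beta\,(\alpha^{N-2}-\beta^{N-2})}{(\alpha-\beta)\,\alpha^{N-j}\beta^j}.
\]
This is an explicit Laurent expression in $u$. The identity then reduces to an elementary equality of rational functions in $u$, treated in three cases: $2\le j\le N-2$ (two preimages), and the boundary cases $j\in\{0,1\}$ and $j\in\{N-1,N\}$ (one preimage). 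I would clear the common denominator $(1-u)(1-u^2)$ and compare.

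Finally, the symmetry claim is immediate: $P_k(\xi,\psi)$, $\xi$, $\psi$ and $\theta$ are all $\mathfrak S_2$-invariant. Hence both classes lie in $R(\GL_2)[\theta]$, and therefore in $K_0(\bP^N,\GL_2)$ via Merkurjev's isomorphism (no $2$-torsion, since that ring is free).

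I expect the main obstacle to be this bookkeeping step: verifying the rational-function identity uniformly in $j$, including the boundary cases where only one preimage exists. If the direct simplification becomes messy, the fallback is to sum the localization contributions for all $j$ at once as a generating function in $\theta^{-1}$. The denominators $1-\alpha^{N-i}\beta^i\theta^{-1}$ then telescope into geometric sums, and comparing coefficients reproduces the degree-$2$ polynomial in $\theta^{-1}$.
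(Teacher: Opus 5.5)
Your proposal is correct and follows the paper's proof. Both use freeness of $K_0(\bP^N,T)$ to reduce to the fixed points $z_m$, sum the localization contributions of $(z_0,z_m)$ and $(z_1,z_{m-2})$, and match the result against the right-hand side at $\theta|_{z_m}=\alpha^{N-m}\beta^m$. The paper avoids your three-way case split: the two local terms $(1-q^{N-1-m})(1-q^{N-m})/(1-q)$ and $(1-q^{-m})(1-q^{1-m})/(1-q^{-1})$ (with $q=\beta/\alpha$) vanish automatically when the corresponding preimage is absent, so a single rational-function identity covers every $m$.
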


\begin{proof}
Since $K_0(\bP^N,T)$ is free over $R(T)$ by \Cref{thm:projectivebundle}, we may verify the identities at the fixed points by \Cref{prop:toruslocalization}. The fixed points $z_{i,j}=(z_i,z_j)$ of $\bP^1\times\bP^{N-2}$ map to $z_{2i+j}\in\bP^N$, so the pushforward formula of Edidin--Hu gives
\[
(\pi_1)_*(\varphi_1^k)\big|_{z_m}
\;=\;
\sum_{2i+j = m}
\bigl(\alpha^{1-i}\beta^i\bigr)^k\,
\frac{\lambda_{-1}\bigl(\mathcal{N}^\vee_{z_m}\bP^N\bigr)}
{\lambda_{-1}\bigl(\mathcal{N}^\vee_{z_{i,j}}(\bP^1\times\bP^{N-2})\bigr)} ,
\]
where $i\in\{0,1\}$ and $0\le j\le N-2$. With $q=\beta/\alpha$, cancelling the common factors in \eqref{eq:pointclass} gives
\[
(\pi_1)_*(\varphi_1^k)\big|_{z_m}
\;=\;
\alpha^k\,\frac{(1-q^{\,N-1-m})(1-q^{\,N-m})}{1-q}
\;+\;
(\alpha q)^k\,\frac{(1-q^{-m})(1-q^{\,1-m})}{1-q^{-1}} ,
\]
where the first summand is nonzero for $m\le N-2$ and the second for $m\ge2$.  Substituting $\theta|_{z_m}=\alpha^Nq^m$, $\xi=\alpha(1+q)$, $\psi=\alpha^2q$ and $P_s=\alpha^s(1-q^{s+1})/(1-q)$ into the right-hand sides of \Cref{eq:closedforms} gives
\[
\frac{\alpha^k}{1-q}\Bigl[
(1-q^{N-1-m})(1-q^{N-m})
-q^{k+1}(1-q^{-m})(1-q^{1-m})\Bigr]
\qquad(k=0,1).
\]
The identity $(1-q)/(1-q^{-1})=-q$ shows that this expression equals $(\pi_1)_*(\varphi_1^k)\big|_{z_m}$ for every $0\le m\le N$. Since $\cO_{\bP^N}(1)$ is $\GL_2$-linearized and the coefficients in \Cref{eq:closedforms} lie in $R(\GL_2)$, both descend to classes in $K_0(\bP^N,\GL_2)$.
\end{proof}

We now pull back the two classes to $\bA^{N+1}\smallsetminus\Delta$, as in Edidin--Hu \cite[Section~4]{EH24}. For our standard $\GL_2$-action, pulling back sends $\theta$ to $1$, while Edidin--Hu obtained a factor of $\psi^2$ because the $\GL_2/\mmu{3} \cong \GL_2$-action is twisted by $(\det\cU)^{\otimes2}$.

\begin{lemma}\label{lem:torsor}
The projection $\bA^{N+1}\smallsetminus\{0\}\to\bP^N$ restricts to a $\bG_m$-torsor $\bA^{N+1}\smallsetminus\Delta\to\bP^N\smallsetminus\Delta_1$, associated to the line bundle $\cO_{\bP^N}(-1)$ with the linearization induced by $\Sym^N\cU^\vee$, and, for $G$ either $\GL_2$ or its diagonal torus $T$,
\[
K_0\bigl(\bA^{N+1}\smallsetminus\Delta, G\bigr) \;\cong \;
K_0\bigl(\bP^N\smallsetminus\Delta_1, G\bigr)\big/(1 - \theta) .
\]
\end{lemma}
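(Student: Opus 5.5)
The plan is to first identify the torsor geometrically and then compute its $K_0$ via the localization sequence for the total space of a line bundle with the zero section removed.

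\textbf{Step 1: the torsor.} The tautological line bundle $\cO_{\bP^N}(-1)$ has total space $\mathrm{Tot}(\cO_{\bP^N}(-1))$, the blowup of $\bA^{N+1}$ at the origin. Here $\bA^{N+1}=\Sym^N\cU^\vee$, with $\GL_2$ acting linearly, so the linearization is the one induced by $\Sym^N\cU^\vee$. Removing the zero section identifies $\bA^{N+1}\smallsetminus\{0\}$ with the complement of the zero section, which is a $\bG_m$-torsor over $\bP^N$. Since $\Delta\subset\bA^{N+1}$ is the affine cone over $\Delta_1$ (the discriminant is homogeneous and contains $0$), restricting over $\bP^N\smallsetminus\Delta_1$ yields the asserted torsor $\bA^{N+1}\smallsetminus\Delta\to\bP^N\smallsetminus\Delta_1$. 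All maps are $\GL_2$-equivariant, hence also $T$-equivariant.

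\textbf{Step 2: the quotient by $(1-\theta)$.} Write $U=\bP^N\smallsetminus\Delta_1$, let $L$ be the restriction of $\cO(-1)$ to $U$, and let $p\colon L\to U$ be the projection with zero section $s\colon U\to L$. I will apply the localization sequence (\Cref{thm:localizationsequence}) to the closed zero section $U\subset L$ with open complement $L\smallsetminus 0=\bA^{N+1}\smallsetminus\Delta$. This gives
\[
\mathrm G_0(U,G)\xrightarrow{s_*}K_0(L,G)\to K_0(L\smallsetminus0,G)\to0.
\]
Two inputs identify the terms. First, homotopy invariance identifies $K_0(L,G)$ with $K_0(U,G)$ via $p^*$. \Cref{thm:homotopyinvariance} is stated only for trivial bundles $X\times\bA^n$, so here I would either cite Thomason's general version for equivariant vector bundles, or reduce to the trivial case by writing $L$ as the restriction to $U$ of the tautological subbundle inside $\bP^N\times\bA^{N+1}$. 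Second, the Koszul resolution of the zero section of a line bundle gives $s_*s^*p^*x=p^*x\cdot\lambda_{-1}(p^*L^\vee)$. Since $\mathrm G_0(U,G)=K_0(U,G)$ and $s^*$ is surjective (indeed $s^*p^*=\mathrm{id}$), the image of $s_*$ is the ideal generated by $1-[L^\vee]=1-\theta$. This gives the isomorphism.

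\textbf{Main obstacle.} The delicate step is getting the sign of the line bundle and its linearization exactly right. Concretely, I must confirm that the Koszul class is $1-[\cO(1)]$ with the linearization induced by $\Sym^N\cU^\vee$, rather than $1-[\cO(-1)]$. This matters because $\theta$ must be the class used in \Cref{thm:closedforms}, so that later pullback sends $\theta\mapsto1$. I will check this by noting that the conormal bundle of the zero section of $L$ is $L^\vee=\cO(1)$, and that the ideal generated by $1-\theta$ equals the ideal generated by $1-\theta^{-1}$, since $\theta$ is a unit. The latter observation makes the conclusion insensitive to this sign, so the check is a consistency check rather than a genuine risk.
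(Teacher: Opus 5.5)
Your proposal is correct and follows the paper's proof. You identify $\bA^{N+1}\smallsetminus\Delta$ with the complement of the zero section of $\cO_{\bP^N}(-1)$ over $\bP^N\smallsetminus\Delta_1$, then combine the localization sequence, homotopy invariance and the Koszul computation $s_*(1)=1-[\cO_{\bP^N}(1)]=1-\theta$. You also note that this needs homotopy invariance for a nontrivial equivariant line bundle, which is Thomason's general version rather than the product case stated in \Cref{thm:homotopyinvariance}; the paper leaves this implicit, and of your two options, citing Thomason directly is the cleaner one.
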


\begin{proof}
The scheme $\bA^{N+1}\smallsetminus\Delta$ is the complement of the zero section of $\cO_{\bP^N}(-1)$ on $\bP^N\smallsetminus\Delta_1$. The localization sequence and homotopy invariance give the claimed quotient, since the zero-section pushforward is multiplication by $1-[\cO_{\bP^N}(1)]=1-\theta$.
\end{proof}

We denote the two relations obtained under $\theta\mapsto1$ by
\[
F_g=1+\xi\psi P_{N-3}-\psi^2P_{2N-4},\qquad
G_g=\xi+\xi\psi^2P_{N-4}-\psi^3P_{2N-5},
\]
and let
\[
A_g:=\bZ[\xi,\psi^{\pm1}]/(F_g,G_g),
\]
which is our proposed presentation for $K_0(\sX_g)$.

\begin{proposition}\label{prop:eqsurj}
The classes $F_g$ and $G_g$ are homogeneous for the $\mmu{g+1}$-character grading, of weights $0$ and $1$, and restriction of representations induces a surjection of $\mu_{g+1}^\vee$-graded rings
\[
A_g \twoheadrightarrow
K_0\bigl(\bA^{N+1}\smallsetminus\Delta, \GL_2\bigr)\cong K_0(\sX_g).
\]
The map sends $\xi$ to $[\cU]$ and $\psi$ to $[\det\cU]$.
\end{proposition}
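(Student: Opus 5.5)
Three things are claimed: the weights of $F_g,G_g$; that the natural map $R(\GL_2)\to K_0(\sX_g)$ factors through $A_g$ and is surjective; and that it respects the gradings. I will handle them in that order.

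\textbf{Weights.} By \eqref{eq:weights}, the monomial $\xi^a\psi^b$ has $\mmu{g+1}$-weight $a+2b \pmod{g+1}$. The polynomial $P_k(\xi,\psi)$ is a sum of terms $\psi^j\xi^{k-2j}$, so it is homogeneous of weight $k$. With $N=2g+2\equiv 0$, the terms of $F_g$ have weights
\[
0,\qquad 1+2+(N-3)=N\equiv0,\qquad 4+(2N-4)=2N\equiv0.
\]
The terms of $G_g$ have weights
\[
1,\qquad 1+4+(N-4)=N+1\equiv1,\qquad 6+(2N-5)=2N+1\equiv1.
\]
So $F_g$ has weight $0$ and $G_g$ has weight $1$, and the ideal $(F_g,G_g)$ is graded. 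Hence $A_g$ inherits a $\mmu{g+1}^\vee$-grading from $R(\GL_2)$.

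\textbf{Surjectivity.} Homotopy invariance (\Cref{thm:homotopyinvariance}) gives $R(\GL_2)\cong K_0(\bA^{N+1},\GL_2)$. The localization sequence (\Cref{thm:localizationsequence}) for $\Delta\subset\bA^{N+1}$ then shows that restriction $R(\GL_2)\to K_0(\bA^{N+1}\smallsetminus\Delta,\GL_2)$ is surjective. Under this map $\xi$ and $\psi$ go to the classes of the pulled-back bundles $\cU$ and $\det\cU$ on $\sX_g$.

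\textbf{$F_g$ and $G_g$ lie in the kernel.} This is the main point. By \Cref{thm:closedforms}, $\pi_{1*}(1)$ and $\pi_{1*}(\varphi_1)$ are classes in $K_0(\bP^N,\GL_2)$. They are pushforwards of coherent sheaves supported on $\Delta_1$, namely $\pi_{1*}$ of $\cO$ and of $\cO(1,0)$. By the localization sequence on $\bP^N$, both restrict to zero in $K_0(\bP^N\smallsetminus\Delta_1,\GL_2)$. \Cref{lem:torsor} then says that pulling back to $\bA^{N+1}\smallsetminus\Delta$ amounts to setting $\theta=1$, and this turns the two classes into $F_g$ and $G_g$.

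It remains to check that the $R(\GL_2)$-algebra structure is compatible along the whole chain. Pullback from $\sB\GL_2$ through $\bP^N\smallsetminus\Delta_1$ to $\bA^{N+1}\smallsetminus\Delta$ agrees with the direct restriction $R(\GL_2)\to K_0(\bA^{N+1}\smallsetminus\Delta,\GL_2)$, because all maps commute over $\sB\GL_2$. Therefore $F_g$ and $G_g$ map to $0$, and the surjection factors through $A_g$.

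\textbf{Grading.} Each $\GL_2$-representation splits into $\mmu{g+1}$-isotypic pieces, and pullback to $\sX_g$ carries a representation of character $\chi$ to an eigensheaf of character $\chi$ (\Cref{thm:isotypic}). So the map $R(\GL_2)\to K_0(\sX_g)$ preserves the gradings. Since the ideal $(F_g,G_g)$ is homogeneous, the induced map $A_g\to K_0(\sX_g)$ is a surjection of graded rings. The identification $K_0(\bA^{N+1}\smallsetminus\Delta,\GL_2)\cong K_0(\sX_g)$ holds by definition of $\sX_g$.

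The only delicate step is the compatibility of $\theta\mapsto 1$ with the linearization. This is already addressed in \Cref{lem:torsor}, where the torsor is associated to $\cO(-1)$ with the $\Sym^N\cU^\vee$-linearization, so $\theta$ pulls back to the trivial equivariant line bundle.
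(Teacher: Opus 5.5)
Your proposal is correct and follows the paper's own route. Surjectivity comes from homotopy invariance plus the localization sequence, the two pushforwards of \Cref{thm:closedforms} vanish off $\Delta_1$, pulling back via \Cref{lem:torsor} sets $\theta\mapsto1$, and the weights follow from $P_k(\xi,\psi)$ having weight $k$; your term-by-term weight check and your remark that the tautological section trivializes the pullback of $\cO(-1)$ equivariantly simply spell out what the paper states in one line each.
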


\begin{proof}
By homotopy invariance and the localization sequence \Cref{eq:localization} for $\Delta\subset\bA^{N+1}$, restriction $R(\GL_2)\to K_0(\sX_g)$ is surjective. The two pushforwards in \Cref{thm:closedforms} vanish on $\bP^N\smallsetminus\Delta_1$, since $\pi_1$ factors through $\Delta_1$.  Pullback to the affine complement sends $\theta$ to $1$ by \Cref{lem:torsor}, so $F_g$ and $G_g$ vanish in $K_0(\sX_g)$ and the map factors through $A_g$. Since $P_k(\xi,\psi)$ has weight $k$, the polynomials $F_g,G_g$ are homogeneous of weights $0,1$, respectively. Restriction of representations preserves the $\mmu{g+1}$-character grading.
\end{proof}

\subsection{The kernel of $R(\GL_2) \twoheadrightarrow K_0(\sX_g)$}
\label{subsec:masterspec}

To prove that $A_g\to K_0(\sX_g)$ is injective, we first study $A_g$ after extending scalars from $R(\GL_2)$ to $R(T)$. Change-of-groups on equivariant $K_0$ gives a restriction map $K_0(\sX_g)\to K_0(\bA^{N+1}\smallsetminus\Delta,T)$. On the level of representation rings, it gives the inclusion
\[
\begin{gathered}
R(\GL_2)\cong\bZ[\xi,\psi^{\pm1}]\cong R(T)^{\mathfrak S_2}
\hookrightarrow R(T),\\
R(T)\cong\bZ[\alpha^{\pm1},\beta^{\pm1}],
\qquad \xi=\alpha+\beta,\quad\psi=\alpha \beta,
\end{gathered}
\]
where $\mathfrak S_2$ acts by interchanging the character classes $\alpha$ and $\beta$. The relations $F_g,G_g$ vanish after restriction to $T$, giving a surjection $R(T)/(F_g,G_g)\twoheadrightarrow K_0(\bA^{N+1}\smallsetminus\Delta,T)$.

Since
\[
R(T)\cong R(\GL_2)[t]/(t^2-\xi t+\psi)\qquad \text{by } t\longmapsto \alpha,
\]
the ring $R(T)$ is free over $R(\GL_2)$ with basis $1,\alpha$. Hence, $R(T)/(F_g,G_g)\cong A_g\otimes_{R(\GL_2)}R(T)$ is a free $A_g$-module with the same basis, so finite freeness of $R(T)/(F_g,G_g)$ over $\bZ$ implies finite freeness of $A_g$, with half the rank. The two factorizations below reduce this computation to the indices of four sublattices of $\bZ^2$.

\begin{lemma}\label{lem:branch}
In $R(T) \cong \bZ[\alpha^{\pm1}, \beta^{\pm1}]$,
\begin{equation}\label{eq:branch}
G_g - \alpha\,F_g \;=\; \beta\,(1 - \alpha^N)(1 - \alpha^{N-1}\beta),
\qquad
G_g - \beta\,F_g \;=\; \alpha\,(1 - \beta^N)(1 - \alpha \beta^{N-1}).
\end{equation}
\end{lemma}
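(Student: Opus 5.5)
The identity is a computation in the Laurent ring $R(T)\cong\bZ[\alpha^{\pm1},\beta^{\pm1}]$, which is a domain. It therefore suffices to verify it after multiplying through by $\alpha-\beta$, so that every $P_k$ can be replaced by its closed form from \Cref{lem:family}(3): $(\alpha-\beta)P_k=\alpha^{k+1}-\beta^{k+1}$. With $\xi=\alpha+\beta$ and $\psi=\alpha\beta$, I will expand
\[
(\alpha-\beta)(G_g-\alpha F_g)
=(\alpha-\beta)(\xi-\alpha)
+\xi\psi\bigl(\psi P_{N-4}-\alpha P_{N-3}\bigr)(\alpha-\beta)
-\psi^2\bigl(\psi P_{2N-5}-\alpha P_{2N-4}\bigr)(\alpha-\beta).
\]
The first term is simply $\beta(\alpha-\beta)$, since $\xi-\alpha=\beta$.

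The two bracketed combinations telescope. Using the closed form, $(\alpha-\beta)\bigl(\alpha\beta P_{k-1}-\alpha P_k\bigr)=\alpha^{k+1}\beta-\alpha\beta^{k+1}-\alpha^{k+2}+\alpha\beta^{k+1}=\alpha^{k+1}(\beta-\alpha)$. Hence
\[
(\alpha-\beta)\bigl(\psi P_{k-1}-\alpha P_k\bigr)=-(\alpha-\beta)\alpha^{k+1},
\]
and more directly $\psi P_{k-1}-\alpha P_k=-\alpha^{k+1}$. This also follows from the recurrence $P_{k+1}=\xi P_k-\psi P_{k-1}$ together with the standard identity $P_{k+1}-\beta P_k=\alpha^{k+1}$. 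So we do not even need to divide by $\alpha-\beta$.

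Applying this identity with $k=N-3$ and with $k=2N-4$ gives
\[
G_g-\alpha F_g=\beta-\xi\psi\,\alpha^{N-2}+\psi^2\alpha^{2N-3}
=\beta-(\alpha+\beta)\alpha^{N-1}\beta+\alpha^{2N-1}\beta^2 .
\]
It remains to check that this equals $\beta(1-\alpha^N)(1-\alpha^{N-1}\beta)=\beta-\alpha^{N-1}\beta^2-\alpha^N\beta+\alpha^{2N-1}\beta^2$. The two expressions agree term by term. The second identity of \eqref{eq:branch} follows by the $\mathfrak S_2$-symmetry swapping $\alpha$ and $\beta$, because $F_g$ and $G_g$ are symmetric in $\alpha,\beta$ (\Cref{thm:closedforms}).

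The only step needing any care is the telescoping identity $\psi P_{k-1}-\alpha P_k=-\alpha^{k+1}$, including the index bookkeeping ($N-4$ versus $N-3$, and $2N-5$ versus $2N-4$). I would verify it once from \eqref{eq:Pclosed}, or by induction from the recurrence with base cases $P_0=1$ and $P_1=\xi$; everything else is direct expansion.
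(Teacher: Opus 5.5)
Your proof is correct and follows essentially the paper's route: substitute $\xi=\alpha+\beta$, $\psi=\alpha\beta$, use $(\alpha-\beta)P_k=\alpha^{k+1}-\beta^{k+1}$, expand, and get the second identity by swapping $\alpha$ and $\beta$. Isolating $\psi P_{k-1}-\alpha P_k=-\alpha^{k+1}$ is a tidy way to organize the cancellation, and it removes the need to divide by $\alpha-\beta$ at the end.
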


\begin{proof}
Substituting $\xi=\alpha+\beta$ and $\psi=\alpha \beta$ into $F_g$ and $G_g$ and expanding with $(\alpha-\beta)P_k=\alpha^{k+1}-\beta^{k+1}$ gives
\[
(\alpha-\beta)(G_g-\alpha F_g)=(\alpha-\beta)\beta(1-\alpha^N)(1-\alpha^{N-1}\beta).
\]
After cancelling $\alpha-\beta$, we obtain the first identity of \Cref{eq:branch}.  Exchanging $\alpha$ and $\beta$ gives the second identity.
\end{proof}

\Needspace{6\baselineskip}
\begin{proposition}\label{prop:masterflat}
The algebra $A_g=\bZ[\xi,\psi^{\pm1}]/(F_g,G_g)$ is a free abelian group of rank $N(2N-3)=(g+1)(8g+2)$.
\end{proposition}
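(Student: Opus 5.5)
The plan is to work in $R(T)\cong\bZ[\alpha^{\pm1},\beta^{\pm1}]$ and prove that $B_g:=R(T)/(F_g,G_g)$ is free over $\bZ$ of rank $2N(2N-3)$. As explained before \Cref{lem:branch}, $B_g\cong A_g\otimes_{R(\GL_2)}R(T)$ is free of rank two over $A_g$, so $A_g$ is then free of rank $N(2N-3)$. The proof has three steps: finiteness of $B_g$ over $\bZ$, freeness of $B_g$ over $\bZ$, and a count of $\dim_{\overline{\bQ}}(B_g\otimes\overline{\bQ})$ by points with multiplicity.

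\emph{Finiteness.} By \Cref{lem:branch}, the ideal $I=(F_g,G_g)$ contains
\[
J:=\bigl(\beta(1-\alpha^N)(1-\alpha^{N-1}\beta),\ \alpha(1-\beta^N)(1-\alpha\beta^{N-1})\bigr).
\]
Since $\alpha,\beta$ are units, $J=\bigl((1-\alpha^N)(1-\alpha^{N-1}\beta),\ (1-\beta^N)(1-\alpha\beta^{N-1})\bigr)$. Its zero locus in $\bG_m^2$ is the union of the four intersections of the subgroup schemes $\{\alpha^N=1\}$, $\{\alpha^{N-1}\beta=1\}$ with $\{\beta^N=1\}$, $\{\alpha\beta^{N-1}=1\}$. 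Each intersection is the diagonalizable group $\Spec\bZ[\bZ^2/L]$, where $L$ is one of the four lattices
\[
\langle(N,0),(0,N)\rangle,\quad\langle(N,0),(1,N-1)\rangle,\quad\langle(N-1,1),(0,N)\rangle,\quad\langle(N-1,1),(1,N-1)\rangle.
\]
These have indices $N^2$, $N(N-1)$, $N(N-1)$, $N(N-2)$, all finite because $N\ge6$. I will turn this into an explicit integrality statement. Multiplying the generators of $J$ by suitable units produces monic relations in $\alpha$ and in $\alpha^{-1}$ over $\bZ[\beta^{\pm1}]$, and symmetrically in $\beta$. Eliminating then shows that $\alpha^{\pm1}$ and $\beta^{\pm1}$ are integral over $\bZ$ modulo $J$. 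Hence $R(T)/J$, and with it its quotient $B_g$, is a finite $\bZ$-module.

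\emph{Freeness.} $R(T)$ is a regular ring of dimension $3$, and $B_g$ is finite over $\bZ$, so every component of $\Spec B_g$ has dimension at most $1$. Since $B_g$ is cut out by two equations, every component also has dimension at least $1$. So $F_g,G_g$ is a regular sequence, and $B_g$ is Cohen--Macaulay of dimension $1$ and equidimensional. Suppose $B_g$ had $p$-torsion for some prime $p$. Then some associated prime would lie over $(p)$; such a prime is maximal, hence of dimension $0$, which is impossible for a Cohen--Macaulay ring of dimension $1$. So $B_g$ is torsion-free, and since it is finitely generated over $\bZ$, it is free. Its rank equals $\dim_{\overline{\bQ}}B_g\otimes\overline{\bQ}$, which is the sum of the local intersection multiplicities of $F_g,G_g$ at the $\overline{\bQ}$-points.

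\emph{Counting multiplicities.} This is the step I expect to be the main obstacle. We have $(\alpha-\beta)F_g=(G_g-\beta F_g)-(G_g-\alpha F_g)\in J$, and $J\subseteq I$. Away from the diagonal $\alpha=\beta$ the factor $\alpha-\beta$ is a unit, so $I$ and $J$ agree there. Off the diagonal, I will show that the four finite subgroups are reduced over $\overline{\bQ}$ and compute their pairwise overlaps. Each overlap is again a lattice quotient, and there the two factors of a single branch equation vanish simultaneously, so I will compute local multiplicities of $J$ at those overlap points by a local computation. At diagonal points $\alpha=\beta=\zeta$, every generator of $J$ vanishes whenever $\zeta^N=1$, so $J$ overcounts there and I must compute the local length of $I$ directly. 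Write $F_g$ in terms of $\zeta$ and $h=\beta-\alpha$, and use the closed forms $P_k=(\alpha^{k+1}-\beta^{k+1})/(\alpha-\beta)$. I expect the local length of $I$ at each such diagonal point to be smaller than that of $J$, by an amount totalling $2N$ over all diagonal points.

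The naive sum of the four indices is $N^2+2N(N-1)+N(N-2)=4N^2-4N$, while the target is $2N(2N-3)=4N^2-6N$. So the overlap corrections and the diagonal corrections together must account for exactly $2N$. If the direct multiplicity bookkeeping becomes unwieldy, my fallback is to count only the reduced points of $B_g\otimes\overline{\bQ}$, show there are at least $2N(2N-3)$ of them, and bound $\dim_{\overline{\bQ}}B_g\otimes\overline{\bQ}$ above by $2N(2N-3)$ via a Gröbner-type monomial basis. Together these give the rank and, as a byproduct, reducedness.
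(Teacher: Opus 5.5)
\textbf{Steps 1--2.} Your freeness argument is a valid alternative to the paper's. Once $B_g$ is known to be finite over $\bZ$, the complete-intersection and Cohen--Macaulay reasoning does show that $B_g$ is $\bZ$-torsion-free.

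The integrality claim in Step 1, however, is only asserted. Integrality of $\alpha^{\pm1}$ over $\bZ[\beta^{\pm1}]$ and of $\beta^{\pm1}$ over $\bZ[\alpha^{\pm1}]$ does not give integrality over $\bZ$; consider $\alpha\beta=1$. Here is a quick repair. Take one generator from each of the four ideals
\[
(1-\alpha^N,1-\beta^N),\quad (1-\alpha^N,1-\alpha\beta^{N-1}),\quad (1-\alpha^{N-1}\beta,1-\beta^N),\quad (1-\alpha^{N-1}\beta,1-\alpha\beta^{N-1}).
\]
Every such product is divisible by $(1-\alpha^N)(1-\alpha^{N-1}\beta)$ or by $(1-\beta^N)(1-\alpha\beta^{N-1})$. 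Hence the product of the four ideals lies in your $J$, and $R(T)/J$ is filtered by finitely generated modules over the four finite lattice quotients.

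\textbf{The gap is Step 3.} This is where the content of the proposition lies, and the rank is never computed.

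The four subgroups meet pairwise only at the $N$ diagonal points $(\zeta,\zeta)$ with $\zeta^N=1$. At each of these, $J$ has length $4$. So the whole discrepancy $4N(N-1)-2N(2N-3)=2N$ amounts to showing that $(F_g,G_g)$ has length exactly $2$ at each diagonal point. You state this only as an expectation.

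Your fallback cannot work as stated, because $B_g\otimes\overline{\bQ}$ is \emph{not} reduced. Over a diagonal point $(\xi,\psi)=(2\zeta,\zeta^2)$ of $\Spec A_g$, the fiber of $R(T)=R(\GL_2)[t]/(t^2-\xi t+\psi)$ is $\overline{\bQ}[t]/((t-\zeta)^2)$. Thus $\Spec(B_g\otimes\overline{\bQ})$ has only $4N^2-7N<2N(2N-3)$ points, and no count of reduced points can reach the target. The Gr\"obner upper bound is also left unspecified.

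\textbf{The missing idea.} This is the paper's second exact sequence. Set $H=G_g-\alpha F_g$ and $H'=G_g-\beta F_g$. Then $J=(H,H')=(H,(\alpha-\beta)F_g)$ and $(F_g,G_g)=(H,F_g)$. The prime $\alpha-\beta$ does not divide $H$, so
\[
0\to R(T)/(F_g,G_g)\xrightarrow{\cdot(\alpha-\beta)}R(T)/J\to R(T)/(H,\alpha-\beta)\to 0
\]
is exact. The cokernel is $\bZ[\alpha^{\pm1}]/((1-\alpha^N)^2)$, which is free of rank $2N$.

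Combine this with the filtration of $R(T)/J$ by the four lattice quotients. That filtration comes from the exact sequences $0\to R/(f,h)\xrightarrow{\cdot u}R/(f,hu)\to R/(f,u)\to0$ for coprime $f,u$. Together they give finiteness, freeness and the rank at once, with no Cohen--Macaulay argument and no local multiplicities.

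\textbf{If you keep the local approach.} The computation is short. Write $\alpha=\zeta(1+a)$ and $\beta=\zeta(1+b)$. The two branches of $H$ have tangent lines $a=0$ and $(N-1)a+b=0$. Meanwhile $F_g=(H'-H)/(\alpha-\beta)$ has linear part $-N(N-1)(a+b)$. So $F_g$ is smooth and transverse to both branches, and the local length is $2$.
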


\begin{proof}
Let $H=G_g-\alpha F_g$ and $J=G_g-\beta F_g$. We first compute the rank of $R(T)/(H,J)$ using the factorizations in \Cref{eq:branch}, then recover the rank of $R(T)/(F_g,G_g)$ from the identity $J-H=(\alpha-\beta)F_g$.

For linearly independent vectors $(i,j),(r,s)\in\bZ^2$, the quotient
\[
R(T)/(1-\alpha^i\beta^j,\,1-\alpha^r\beta^s)
\]
has a $\bZ$-basis consisting of one monomial for each coset of $\bZ(i,j)+\bZ(r,s)$ in $\bZ^2$. Its rank is $[\bZ^2:\bZ(i,j)+\bZ(r,s)]=|is-jr|$, giving the following four ranks.
\[
\begin{array}{c|c}
I & \operatorname{rank}_{\bZ}(R(T)/I)\\ \hline
(1-\alpha^N,\,1-\beta^N) & N^2\\
(1-\alpha^N,\,1-\alpha \beta^{N-1}) & N(N-1)\\
(1-\alpha^{N-1}\beta,\,1-\beta^N) & N(N-1)\\
(1-\alpha^{N-1}\beta,\,1-\alpha \beta^{N-1}) & N(N-2).
\end{array}
\]
The two generators of $I$ in each row are coprime in $R(T)$. For $f,h,u\in R(T)$ with $f$ and $u$ having no common irreducible factor, multiplication by $u$ gives an exact sequence
\[
0\longrightarrow R(T)/(f,h)\xrightarrow{\ \cdot u\ }R(T)/(f,hu)
\longrightarrow R(T)/(f,u)\longrightarrow0.
\]
Applying this sequence successively to the factors of $H$ and $J$ gives a filtration of $R(T)/(H,J)$ with the four quotients $R(T)/I$ in the above table. Since all four are finite free over $\bZ$, we conclude that $R(T)/(H,J)$ is a free abelian group of rank $4N(N-1)$.

To recover $R(T)/(F_g,G_g)$, we use the fact that $J-H=(\alpha-\beta)F_g$. The element $\alpha-\beta$ is prime in $R(T)$ and does not divide $H$, since substituting $\beta=\alpha$ gives $H=\alpha(1-\alpha^N)^2\ne0$. Applying the preceding exact sequence with $f=H$, $h=F_g$, and $u=\alpha-\beta$ gives
\[
0\longrightarrow R(T)/(F_g,G_g)
\xrightarrow{\ \cdot(\alpha-\beta)\ }R(T)/(H,J)
\longrightarrow R(T)/(H,\alpha-\beta)\longrightarrow0.
\]
The last quotient is isomorphic to $\bZ[\alpha^{\pm1}]/((1-\alpha^N)^2)$, which is free of rank $2N$. Thus, $R(T)/(F_g,G_g)$ is finite free of rank $4N(N-1)-2N=2N(2N-3)$. As a $\bZ$-module, it is isomorphic to $A_g\oplus A_g$, so $A_g$ is finite free of rank $N(2N-3)$.
\end{proof}

We now describe the closed points of the finite scheme $\Spec(A_g\otimes\overline{\bQ})$. Since $R(\GL_2) \cong R(T)^{\mathfrak S_2}$, these points can be parametrized by unordered pairs $\{\alpha,\beta\}$, with $\alpha,\beta\in\overline{\bQ}^{\times}$, satisfying $F_g(\alpha+\beta,\alpha \beta)=G_g(\alpha+\beta,\alpha \beta)=0$. Exhibiting $N(2N-3)$ distinct points will prove reducedness, since this equals the dimension of $A_g\otimes\overline{\bQ}$ by \Cref{prop:masterflat}.

\begin{proposition}\label{prop:masterspec}
The scheme $\Spec(A_g\otimes\overline{\bQ})$ consists of $(g+1)(8g+2)$ reduced points, parametrized by the unordered pairs $\{\alpha,\beta\}$ satisfying one of the following conditions, where it is understood that $(\alpha+\beta,\alpha \beta) = (\xi,\psi)$:
\begin{enumerate}[leftmargin=2.4em]
\item $\alpha^N = \beta^N = 1$, giving $\binom{N}{2}$ points with $\alpha\neq \beta$ and $N$ points with $\alpha = \beta$.
\item $\alpha^N = 1$ and $\alpha \beta^{N-1} = 1$ with $\alpha\neq \beta$, or the same with $\alpha$ and $\beta$ exchanged, giving $N(N-2)$ points.
\item $\alpha^{N-1}\beta = \alpha \beta^{N-1} = 1$ with $\alpha\neq \beta$, giving $N(N-3)/2$ points.
\end{enumerate}
\end{proposition}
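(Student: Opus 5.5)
The plan is to count points and compare with the rank. By \Cref{prop:masterflat}, $A_g\otimes\overline{\bQ}$ has dimension $N(2N-3)$, so exhibiting that many distinct closed points forces every point to be reduced and the list to be complete. It therefore suffices to describe the common zeros of $F_g$ and $G_g$ in terms of $(\alpha,\beta)$ and count the resulting unordered pairs.

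\textbf{Step 1: reduce to the branch factorizations.} Let $(\alpha,\beta)\in(\overline{\bQ}^\times)^2$ with $\xi=\alpha+\beta$ and $\psi=\alpha\beta$. Then $F_g(\xi,\psi)=G_g(\xi,\psi)=0$ if and only if $H=J=0$ and, in addition, $F_g=0$ whenever $\alpha=\beta$. Indeed, \Cref{lem:branch} gives $H=G_g-\alpha F_g$ and $J=G_g-\beta F_g$, so $J-H=(\alpha-\beta)F_g$. If $\alpha\neq\beta$, then $H=J=0$ already implies $F_g=G_g=0$. By \eqref{eq:branch}, the condition $H=0$ means $\alpha^N=1$ or $\alpha^{N-1}\beta=1$. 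The condition $J=0$ means $\beta^N=1$ or $\alpha\beta^{N-1}=1$. Combining these gives the four cases matching the four ideals in the table of \Cref{prop:masterflat}.

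\textbf{Step 2: count ordered pairs with $\alpha\ne\beta$ in each case, using inclusion--exclusion between overlapping cases.}
\begin{enumerate}[leftmargin=2.4em]
\item Case $\alpha^N=\beta^N=1$: there are $N^2$ ordered pairs, of which $N(N-1)$ have $\alpha\ne\beta$.
\item Case $\alpha^N=1$ and $\alpha\beta^{N-1}=1$: then $\beta^{N-1}=\alpha^{-1}$, so $\beta^{N(N-1)}=1$, giving $N(N-1)$ ordered pairs. I will exclude those already in case (1). If $\beta^N=1$, then $\beta^{N-1}=\beta^{-1}$, so $\alpha=\beta$. Hence the new pairs with $\alpha\ne\beta$ are exactly those with $\beta^N\neq1$, numbering $N(N-1)-N=N(N-2)$. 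The symmetric case contributes the same count, and swapping $\alpha,\beta$ exchanges the two cases, so together they give $N(N-2)$ unordered pairs.
\item Case $\alpha^{N-1}\beta=\alpha\beta^{N-1}=1$: there are $N(N-2)$ ordered pairs. I will exclude overlaps with the earlier cases. If $\alpha^N=1$, then $\beta=\alpha$ as above, so every pair with $\alpha\ne\beta$ lies in none of the other cases. Pairs with $\alpha=\beta$ satisfy $\alpha^N=1$, so there are $N$ of them, leaving $N(N-3)$ ordered pairs, i.e.\ $N(N-3)/2$ unordered pairs.
\end{enumerate}

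\textbf{Step 3: handle the diagonal $\alpha=\beta$.} By Step 1, we need $H=0$ and $F_g=0$. If $\alpha^{N-1}\beta=1$ with $\beta=\alpha$, then $\alpha^N=1$. So $H=0$ forces $\alpha^N=1$, and it remains to check $F_g=0$ there. At $\beta=\alpha$ we have $P_k=(k+1)\alpha^k$, so
\[
F_g=1+2(N-2)\alpha^N-(2N-3)\alpha^{2N},
\]
which vanishes when $\alpha^N=1$. This yields exactly $N$ points.

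\textbf{Step 4: total and conclude.} The points are distinct because they are distinct unordered pairs. Summing,
\[
\tbinom N2+N+N(N-2)+\tfrac{N(N-3)}2=2N^2-3N=N(2N-3).
\]
Since this equals $\dim A_g\otimes\overline{\bQ}$, each point has length one and there are no others.

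The main obstacle is keeping the overlap bookkeeping in Step 2 correct, together with justifying the diagonal criterion: when $\alpha=\beta$, the equation $F_g=0$ must be imposed separately, because $J-H$ vanishes identically there.
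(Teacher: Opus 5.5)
Your proposal is correct and follows essentially the same route as the paper: you exhibit $N(2N-3)$ distinct unordered pairs as common zeros, using the factorizations of \Cref{lem:branch} off the diagonal and the substitution $P_k(2\alpha,\alpha^2)=(k+1)\alpha^k$ on it, and then invoke the rank from \Cref{prop:masterflat} to get both exhaustion and reducedness. The only real difference is that your Step 1 also shows directly that there are no other common zeros, which is harmless but unnecessary, since the dimension count already forces this.
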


\begin{proof}
We first verify that the listed points are distinct elements of $\Spec(A_g\otimes\overline{\bQ})$. For $\alpha\ne \beta$, the identities in \Cref{eq:branch} show that each pair is a common zero of $F_g$ and $G_g$. For $\alpha=\beta$ with $\alpha^N=1$, the equality $P_k(2\alpha,\alpha^2)=(k+1)\alpha^k$ gives the same conclusion by substitution. The coordinates $(\alpha+\beta,\alpha \beta)$ determine the unordered pair $\{\alpha,\beta\}$. No point can satisfy more than one condition: satisfying two of the listed conditions would impose both $\alpha^N=1$ and $\alpha^{N-1}\beta=1$, or the same equations with $\alpha$ and $\beta$ exchanged, forcing $\alpha=\beta$, which is excluded in (2) and (3).

The equations $\alpha^N=\beta^N=1$, $\alpha^N=\alpha \beta^{N-1}=1$, and $\alpha^{N-1}\beta=\alpha \beta^{N-1}=1$ have $N^2$, $N(N-1)$, and $N(N-2)$ ordered solutions, respectively. Each has $N$ solutions with $\alpha=\beta$. Exchanging $\alpha$ and $\beta$ preserves the first and third systems, whereas the second system and its exchanged version meet only where $\alpha=\beta$. The numbers of unordered pairs are therefore $N(N+1)/2$, $N(N-2)$ and $N(N-3)/2$, whose sum is $N(2N-3)$. By \Cref{prop:masterflat}, this is also the dimension of $A_g\otimes\overline{\bQ}$. Hence, these points exhaust this spectrum and are all reduced.
\end{proof}

Finite freeness over $\bZ$ and reducedness over $\overline{\bQ}$ imply that elements of $A_g$ are determined by their values at these points, as follows.

\begin{lemma}\label{lem:generic-evaluation}
Let $A$ be a commutative $\bZ$-algebra that is torsion-free as a $\bZ$-module. If $A\otimes_{\bZ}\overline{\bQ}$ is reduced and finite-dimensional over $\overline{\bQ}$, then evaluation induces an injective ring homomorphism
\[
A\longrightarrow
\prod_{x\in\Spec(A\otimes_{\bZ}\overline{\bQ})}\overline{\bQ},
\qquad f\longmapsto\bigl(f(x)\bigr)_x.
\]
\end{lemma}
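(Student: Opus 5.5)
The plan is to factor the evaluation map through the two base changes $A\to A\otimes_{\bZ}\overline{\bQ}$ and $A\otimes_{\bZ}\overline{\bQ}\to\prod_x\overline{\bQ}$, and to show that each is injective. The evaluation map itself is clearly a ring homomorphism: each closed point $x$ of $\Spec(A\otimes_{\bZ}\overline{\bQ})$ has residue field $\overline{\bQ}$, since the algebra is finite-dimensional and $\overline{\bQ}$ is algebraically closed. So the content of the lemma is injectivity.

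For the first map, note that $A$ is torsion-free, hence flat over $\bZ$, which is a PID. Then $A\to A\otimes_{\bZ}\bQ$ is injective, because the kernel of localization at $\bZ\smallsetminus\{0\}$ consists of torsion elements. Next, $\bQ\to\overline{\bQ}$ is an injective map of $\bQ$-vector spaces, and it splits, so tensoring with $A\otimes\bQ$ over $\bQ$ preserves injectivity. Composing, $A\hookrightarrow A\otimes_{\bZ}\overline{\bQ}$.

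For the second map, $B:=A\otimes_{\bZ}\overline{\bQ}$ is a finite-dimensional, hence Artinian, $\overline{\bQ}$-algebra. By the structure theorem for Artinian rings, $B\cong\prod_{\mathfrak m}B_{\mathfrak m}$, where the product runs over finitely many maximal ideals and each factor is local Artinian. Since $B$ is reduced, each local factor is reduced. A local Artinian ring has nilpotent maximal ideal, so a reduced one is a field. That field is a finite extension of $\overline{\bQ}$, hence equal to $\overline{\bQ}$. Therefore $B\cong\prod_x\overline{\bQ}$, and under this isomorphism the projection to the $x$-th factor is exactly evaluation at $x$. In particular, the map $B\to\prod_x\overline{\bQ}$ is an isomorphism. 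Equivalently, its kernel is the nilradical $\bigcap_x\mathfrak m_x$, which is zero.

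Neither step is a real obstacle. The only points needing care are that torsion-freeness is what gives injectivity into $A\otimes\bQ$, and that the residue fields are exactly $\overline{\bQ}$, which is what allows values to be taken in $\overline{\bQ}$.
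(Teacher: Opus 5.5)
Your proposal is correct and follows essentially the same route as the paper: injectivity of $A\to A\otimes_{\bZ}\overline{\bQ}$ from torsion-freeness, then the identification of a reduced finite-dimensional $\overline{\bQ}$-algebra with $\prod_x\overline{\bQ}$ via evaluation. You also supply the details the paper leaves implicit, namely the factorization through $A\otimes\bQ$ and the Artinian structure theorem argument.
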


\begin{proof}
Torsion-freeness implies that $A\to A\otimes_{\bZ}\overline{\bQ}$ is injective. Since a reduced finite-dimensional algebra over an algebraically closed field is a product of copies of that field, evaluation identifies $A\otimes_{\bZ}\overline{\bQ}$ with the desired product.
\end{proof}

To apply \Cref{lem:generic-evaluation} to $\ker(A_g\to K_0(\sX_g))$, we will show that each point of $\Spec(A_g\otimes\overline{\bQ})$ is represented by the eigenvalues of an automorphism fixing a square-free binary form. A virtual representation whose class vanishes in $K_0(\sX_g)$ then restricts to zero in the representation ring of the stabilizer of such a form, so its character vanishes at that automorphism.

\Needspace{6\baselineskip}
\begin{lemma}\label{lem:realize}
Every point of $\Spec(A_g\otimes\overline{\bQ})$ is parametrized by $(\xi,\psi)=(\alpha+\beta,\alpha \beta)$ for some $\alpha,\beta\in\overline{\bQ}^\times$ such that $\operatorname{diag}(\alpha,\beta)$ fixes a square-free binary form of degree $N$ over $\overline{\bQ}$.
\end{lemma}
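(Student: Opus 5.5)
The plan is to go through the three families of points in \Cref{prop:masterspec}. Since that proposition shows these families exhaust $\Spec(A_g\otimes\overline{\bQ})$, it suffices to write down, for each family, one explicit square-free binary form of degree $N$ fixed by $\operatorname{diag}(\alpha,\beta)$. Because the point only depends on the unordered pair $\{\alpha,\beta\}$, we may order the pair as convenient (or swap $x_0,x_1$).

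The key observation concerns the action \eqref{eq:action}. The element $A=\operatorname{diag}(\alpha,\beta)$ sends $x_0^ax_1^b$ (with $a+b=N$) to $\alpha^{-a}\beta^{-b}x_0^ax_1^b$. A form is therefore fixed by $A$ exactly when every monomial in its support satisfies $\alpha^a\beta^b=1$. So for each condition in \Cref{prop:masterspec} we pick two monomials whose exponent vectors are the ones appearing in that condition, and add them.

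Case (1): $\alpha^N=\beta^N=1$. This covers both $\alpha\neq\beta$ and $\alpha=\beta$, the latter being a scalar matrix. Take $f=x_0^N+x_1^N$. It is fixed by $A$, and it is square-free over $\overline{\bQ}$: its roots are the $N$ distinct $N$-th roots of $-1$, and these are distinct in characteristic zero.

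Case (2): $\alpha^N=1$ and $\alpha\beta^{N-1}=1$. Take
\[
f=x_0^N+x_0x_1^{N-1}=x_0\bigl(x_0^{N-1}+x_1^{N-1}\bigr).
\]
This is fixed because both monomials satisfy the required identity. Its roots are $x_0=0$ together with the $N-1$ distinct points where $(x_0/x_1)^{N-1}=-1$, none of which is $x_0=0$, so $f$ is square-free. The exchanged version is handled by the form obtained by swapping $x_0$ and $x_1$, or equivalently by reordering the pair.

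Case (3): $\alpha^{N-1}\beta=\alpha\beta^{N-1}=1$. Take
\[
f=x_0^{N-1}x_1+x_0x_1^{N-1}=x_0x_1\bigl(x_0^{N-2}+x_1^{N-2}\bigr).
\]
Since $N-2=2g\ge 4$, the last factor has $N-2$ distinct roots, none equal to $0$ or $\infty$, so $f$ is square-free.

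There is no real obstacle here. The only points needing care are the square-freeness checks, which use characteristic zero (the polynomial $t^m+1$ is separable), and confirming that the exponents in each chosen form match the defining equations. The main logical input is the exhaustiveness statement of \Cref{prop:masterspec}, which I would simply invoke.
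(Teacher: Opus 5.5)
Your proposal is correct and follows the paper's proof: it runs through the three families of \Cref{prop:masterspec} with the same forms $x_0^N+x_1^N$, $x_0^N+x_0x_1^{N-1}$ and $x_0^{N-1}x_1+x_0x_1^{N-1}$, and checks that each monomial is fixed by $\operatorname{diag}(\alpha,\beta)$. Your explicit root counts for square-freeness, and the swapped form for the exchanged version of case (2), spell out what the paper only asserts.
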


\begin{proof}
For the points of types (1), (2), and (3) of \Cref{prop:masterspec}, respectively, we take
\[
x_0^N+x_1^N,\qquad
x_0^N+x_0x_1^{N-1},\qquad
x_0^{N-1}x_1+x_0x_1^{N-1}.
\]
All three forms are square-free over $\overline{\bQ}$. The equations $\alpha^N=\beta^N=1$, $\alpha^N=\alpha \beta^{N-1}=1$, and $\alpha^{N-1}\beta=\alpha \beta^{N-1}=1$, respectively, ensure that both monomials are fixed by $\operatorname{diag}(\alpha,\beta)$. 
\end{proof}

\begin{lemma}\label{lem:orbitvanish}
Let $t_0 = \operatorname{diag}(\alpha_0, \beta_0)$ be a $\overline{\bQ}$-point of  $T$ fixing a square-free binary form $f_0$ of degree $N$ over $\overline{\bQ}$.  Then every element of the kernel of $R(T)\to K_0(\bA^{N+1}\smallsetminus\Delta, T)$ vanishes at $(\alpha_0, \beta_0)$.
\end{lemma}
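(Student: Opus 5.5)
The plan is to restrict along the orbit of $f_0$. Let $H\subseteq T$ be the stabilizer of $f_0$ in $T$; it is a closed subgroup scheme containing $t_0$. The orbit map $T\to\bA^{N+1}\smallsetminus\Delta$, $t\mapsto t\cdot f_0$, is well defined because $f_0$ is square-free, and it is $T$-equivariant. It factors through an equivariant locally closed immersion $T/H\hookrightarrow\bA^{N+1}\smallsetminus\Delta$ onto the orbit $O=T\cdot f_0$, and the orbit is a smooth $T$-invariant subscheme. Pulling back along this map gives a ring homomorphism
\[
K_0\bigl(\bA^{N+1}\smallsetminus\Delta,T\bigr)\longrightarrow K_0(O,T)\cong K_0(T/H,T)\cong K_0(\sB H)=R(H).
\]
The composite with $R(T)\to K_0(\bA^{N+1}\smallsetminus\Delta,T)$ is restriction of representations $R(T)\to R(H)$, because all these maps are pullbacks of classes from $\sB T$. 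The last identification is the standard Morita-type equivalence between $T$-equivariant sheaves on $T/H$ and $H$-representations, i.e.\ $[(T/H)/T]\cong\sB H$.

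It follows that any $x$ in the kernel of $R(T)\to K_0(\bA^{N+1}\smallsetminus\Delta,T)$ restricts to $0$ in $R(H)$. Now $t_0$ is a $\overline{\bQ}$-point of $H$, and evaluating characters at $t_0$ gives a ring homomorphism $R(H)\to\overline{\bQ}$, $[V]\mapsto\operatorname{tr}(t_0\mid V)$. Since $H$ is a diagonalizable group scheme over $\overline{\bQ}$, this is just evaluation of the character.

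On $R(T)=\bZ[\alpha^{\pm1},\beta^{\pm1}]$, the composite sends a Laurent polynomial to its value at $(\alpha_0,\beta_0)$. Hence $x(\alpha_0,\beta_0)=0$, which is the claim.

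The step that needs care is the identification of the pullback to the orbit with $R(H)$, and its compatibility with restriction. The orbit map $T/H\to\bA^{N+1}\smallsetminus\Delta$ need not be a closed immersion; it is only an immersion onto a locally closed orbit. However, pullback of vector bundles along any equivariant morphism from a smooth source is fine. We can avoid the orbit altogether by using the equivariant morphism $\sB H=[\Spec\overline{\bQ}/H]\to[(\bA^{N+1}\smallsetminus\Delta)/T]$ induced by the point $f_0$ and the inclusion $H\subseteq T$. This morphism is well defined since $H$ fixes $f_0$, and pullback of vector bundles gives a ring map to $K_0(\sB H)=R(H)$. The compatibility with $R(T)\to R(H)$ then holds because the composite $\sB H\to\sB T$ is induced by the inclusion $H\subseteq T$.

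A small point remains: $f_0$ is defined over $\overline{\bQ}$ while the stack lives over the base field $\Bbbk$ of characteristic zero. I would handle this by base changing to an algebraically closed extension containing $\overline{\bQ}$. Base change is a ring map on equivariant $K_0$ compatible with $R(T)$, so the kernel only grows, and the conclusion transfers back.
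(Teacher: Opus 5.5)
Your proposal is correct and follows the paper's proof. Like the paper, you base change to $\bar{\Bbbk}$, pull back to the orbit of $f_0$ to land in $R(\operatorname{Stab}_T(f_0))$, observe that the composite from $R(T)$ is restriction of characters, and evaluate at $t_0$. Your use of the map $\sB H\to[(\bA^{N+1}\smallsetminus\Delta)/T]$ in place of the orbit is just a slightly cleaner phrasing of the same step.
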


\begin{proof}
Since the base field $\Bbbk$ has characteristic zero, $\overline{\bQ}$ embeds in $\bar{\Bbbk}$, so $t_0$ and $f_0$ are defined over $\bar{\Bbbk}$. Equivariant vector bundles on the orbit of $f_0$ correspond to representations of the diagonalizable group $\operatorname{Stab}_T(f_0)$. Base change to $\bar{\Bbbk}$ followed by pullback to this orbit gives ring maps
\[
R(T)\longrightarrow
K_0(\bA^{N+1}\smallsetminus\Delta,T)
\longrightarrow R(\operatorname{Stab}_T(f_0)),
\]
whose composite is restriction of characters. Since $t_0\in\operatorname{Stab}_T(f_0)$, evaluation at $t_0$ factors through $R(\operatorname{Stab}_T(f_0))$ and therefore vanishes on $\ker\bigl(R(T)\to K_0(\bA^{N+1}\smallsetminus\Delta,T)\bigr)$.
\end{proof}

\begin{theorem}\label{thm:eqpres}
Restriction of representations induces an isomorphism of $\mmu{g+1}^\vee$-graded rings
\[
A_g=\bZ[\xi,\psi^{\pm1}]/(F_g,G_g)
\;\xrightarrow{\ \sim\ }\; K_0(\sX_g),
\qquad
\xi\longmapsto[\cU],\quad
\psi\longmapsto[\det\cU].
\]
\end{theorem}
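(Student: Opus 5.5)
Surjectivity and compatibility with the grading are already given by \Cref{prop:eqsurj}, so it remains to show that the kernel $\mathcal K$ of $A_g\twoheadrightarrow K_0(\sX_g)$ is zero. By \Cref{prop:masterflat}, $A_g$ is free of finite rank over $\bZ$, hence torsion-free. By \Cref{prop:masterspec}, $A_g\otimes\overline{\bQ}$ is reduced and finite-dimensional. \Cref{lem:generic-evaluation} therefore shows that an element $a\in A_g$ is zero as soon as $a(x)=0$ for every point $x\in\Spec(A_g\otimes\overline{\bQ})$. The argument thus reduces to showing that every element of $\mathcal K$ vanishes at every such point.

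Fix $a\in\mathcal K$ and lift it to $\tilde a\in R(\GL_2)=\bZ[\xi,\psi^{\pm1}]$. Then $\tilde a$ lies in the kernel of $R(\GL_2)\to K_0(\sX_g)$. Next I compose with restriction to the torus, $K_0(\sX_g)\to K_0(\bA^{N+1}\smallsetminus\Delta,T)$, which is compatible with the inclusion $R(\GL_2)\hookrightarrow R(T)$. This shows that the image of $\tilde a$ in $R(T)$ lies in $\ker\bigl(R(T)\to K_0(\bA^{N+1}\smallsetminus\Delta,T)\bigr)$. I use only this inclusion of kernels, not injectivity of the restriction map, so no torsion issues arise here.

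Now let $x$ be a point of $\Spec(A_g\otimes\overline{\bQ})$. By \Cref{lem:realize}, $x$ corresponds to $(\xi,\psi)=(\alpha_0+\beta_0,\alpha_0\beta_0)$, where $\operatorname{diag}(\alpha_0,\beta_0)$ fixes a square-free form $f_0$. \Cref{lem:orbitvanish} then gives $\tilde a(\alpha_0+\beta_0,\alpha_0\beta_0)=0$. The value of $\tilde a$ there equals $a(x)$, because evaluation at $x$ factors through $A_g$: the elements $F_g$ and $G_g$ vanish at $x$. Hence $a(x)=0$ for all $x$, so $a=0$ and the map is an isomorphism of graded rings.

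The only delicate point is bookkeeping. I need evaluation at the pair $(\alpha_0,\beta_0)$ on $R(T)$ to agree with evaluation at the point $(\xi,\psi)$ of $\Spec A_g$ under the symmetric substitution. I also need the base-field extension used in \Cref{lem:orbitvanish} to be harmless, which holds because $\overline{\bQ}\subset\bar\Bbbk$ in characteristic zero. The substantive inputs, namely the rank count and reducedness, are already established, so I expect no real obstacle.
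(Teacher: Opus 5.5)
Your proposal is correct and follows the paper's proof: surjectivity comes from \Cref{prop:eqsurj}, and injectivity follows by combining \Cref{lem:generic-evaluation}, using the rank count and reducedness, with \Cref{lem:realize,lem:orbitvanish}. Your explicit passage through the restriction to $T$ just spells out a step the paper leaves implicit, namely that an element of the $\GL_2$-kernel lands in the $R(T)$-kernel to which \Cref{lem:orbitvanish} applies.
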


\begin{proof}
By \Cref{prop:eqsurj}, it remains to prove injectivity. If $h\in R(\GL_2)$ maps to zero in $K_0(\sX_g)$, \Cref{lem:realize,lem:orbitvanish} show that $h$ vanishes at every point of $\Spec(A_g\otimes\overline{\bQ})$. \Cref{prop:masterflat,prop:masterspec} establish the hypotheses of \Cref{lem:generic-evaluation}, so $h\in(F_g,G_g)$.
\end{proof}

\begin{corollary}\label{prop:gerbefinite}
The ring $K_0(\sX_g)$ is a free abelian group of rank $(g+1)(8g+2)$.
\end{corollary}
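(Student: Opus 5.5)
This follows directly from the two preceding results. By \Cref{thm:eqpres}, restriction of representations gives a ring isomorphism $A_g=\bZ[\xi,\psi^{\pm1}]/(F_g,G_g)\xrightarrow{\sim}K_0(\sX_g)$. In particular, the two rings are isomorphic as abelian groups. So it suffices to know the additive structure of $A_g$.

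That is exactly the content of \Cref{prop:masterflat}: $A_g$ is a free abelian group of rank $N(2N-3)$. To match the stated form, substitute $N=2g+2$:
\[
N(2N-3)=(2g+2)(4g+1)=(g+1)(8g+2).
\]
Transporting this through the isomorphism of \Cref{thm:eqpres} shows that $K_0(\sX_g)$ is free abelian of rank $(g+1)(8g+2)$.

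There is no real obstacle left at this stage, since the substantive work has already been done. Two points carry it. First, $R(T)/(F_g,G_g)$ is free of rank $2N(2N-3)$, obtained by filtering $R(T)/(H,J)$ using the factorizations of \Cref{lem:branch} and then dividing out by $\alpha-\beta$. Second, $A_g\otimes\overline{\bQ}$ is reduced (\Cref{prop:masterspec}), and this is what makes the injectivity argument in \Cref{thm:eqpres} work. The only check needed here is the arithmetic identity above.

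As a sanity check, one can compare the rank with the count of points in \Cref{prop:masterspec}:
\[
\frac{N(N+1)}{2}+N(N-2)+\frac{N(N-3)}{2}=N(2N-3).
\]
This agrees with the rank $\dim_{\overline{\bQ}}A_g\otimes\overline{\bQ}$, as it must for a finite free $\bZ$-module.
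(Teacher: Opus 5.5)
Your proposal is correct and matches the paper's proof: combine the isomorphism $A_g\cong K_0(\sX_g)$ of \Cref{thm:eqpres} with the freeness of $A_g$ of rank $N(2N-3)$ from \Cref{prop:masterflat}. Your arithmetic $N(2N-3)=(2g+2)(4g+1)=(g+1)(8g+2)$ and the point-count sanity check are both correct.
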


\begin{proof}
By \Cref{thm:eqpres}, we have $K_0(\sX_g)\cong A_g$, which is finite free of the stated rank by \Cref{prop:masterflat}.
\end{proof}

The discriminant gives a further relation that will allow us to express inverses of determinant classes as polynomials in the Chern-class presentations.

\begin{proposition}\label{prop:discriminantclass}
The discriminant divisor has fundamental class $[\cO_\Delta]=1-\psi^{N(N-1)}$ in $K_0(\bA^{N+1},\GL_2)$, and $\psi^{N(N-1)}=1$ in $K_0(\sX_g)$.
\end{proposition}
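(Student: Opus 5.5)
The discriminant $\Delta\subset\bA^{N+1}$ is the zero locus of the discriminant polynomial $\operatorname{disc}\in\Bbbk[a_0,\dots,a_N]$. This polynomial is homogeneous of degree $2(N-1)$ in the coefficients of $f$. Hence $\Delta$ is an effective Cartier divisor, and its fundamental class should be computed from the Koszul resolution
\[
0\to\cO_{\bA^{N+1}}\otimes\chi\xrightarrow{\cdot\operatorname{disc}}\cO_{\bA^{N+1}}\to\cO_\Delta\to0 .
\]
Here $\chi$ is the character of $\GL_2$ by which $\operatorname{disc}$ transforms. This gives $[\cO_\Delta]=1-[\chi]$ in $K_0(\bA^{N+1},\GL_2)\cong R(\GL_2)$ by \Cref{thm:homotopyinvariance}, and the proof reduces to identifying $\chi=(\det\cU)^{\otimes N(N-1)}$.

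To identify $\chi$, I would use the classical transformation law of the discriminant of a binary form: $\operatorname{disc}(f\circ A)=\det(A)^{N(N-1)}\operatorname{disc}(f)$. I would verify it on the torus. Write $f=a_0\prod_i(x_0-r_ix_1)$ up to normalization, so that $\operatorname{disc}(f)=a_0^{2N-2}\prod_{i<j}(r_i-r_j)^2$. Under $\operatorname{diag}(s,t)$, each root $r_i$ scales by a fixed ratio and $a_0$ by a power of $s$. Tracking exponents gives total weight $\det^{N(N-1)}$ with respect to substitution, using that the weight is $(2N-2)\cdot N/2$ in each variable by symmetry. The same law for unipotent upper-triangular matrices (translations of roots, under which $\operatorname{disc}$ is invariant) and generation of $\GL_2$ by these then fix $\chi$ as a power of $\det$.

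The sign convention is the main obstacle. The action is $A\cdot f=f(A^{-1}x)$, and the coordinate functions on $\bA^{N+1}$ carry the dual action, so I must check that the net character of the ideal sheaf $\operatorname{disc}\cdot\cO$ is $\psi^{N(N-1)}$ rather than $\psi^{-N(N-1)}$. I would test this on a one-dimensional case, for instance $f=a_0x_0^N$ (the $T$-fixed coordinate), comparing with the paper's convention that the coordinate of $x_0^{s-i}x_1^i$ carries character $\alpha^{-(s-i)}\beta^{-i}$. With that convention, the function $a_0^{2N-2}\cdots$ contributes weight $\alpha^{(N-1)N}\beta^{(N-1)N}$ after combining with the symmetric $x_1$-part. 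If the sign instead came out as $\psi^{-N(N-1)}$, the second claim would be unaffected, since $\psi^{\pm N(N-1)}=1$ are equivalent.

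For the second statement, I would restrict to the open complement. The localization sequence \Cref{thm:localizationsequence} shows that the image of $\mathrm G_0(\Delta,\GL_2)$, in particular $[\cO_\Delta]$, maps to zero in $K_0(\sX_g)$. Therefore $1-\psi^{N(N-1)}=0$ there.
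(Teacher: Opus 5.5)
Your proposal is correct and matches the paper's proof: the paper also uses the transformation law $\operatorname{disc}(f\circ A^{-1})=\det(A)^{-N(N-1)}\operatorname{disc}(f)$ (cited from Arsie--Vistoli) to view $\operatorname{disc}$ as an equivariant section of $\det^{-N(N-1)}$. It then reads off $[\cO_\Delta]=1-\psi^{N(N-1)}$ from the divisor sequence and restricts to the square-free locus. One harmless slip in your optional verification is that the torus and the upper unipotents generate only the Borel subgroup, not $\GL_2$; this does not matter, since $\Delta$ is $\GL_2$-stable, so $\operatorname{disc}$ is already a semi-invariant whose character is a power of $\det$, and your torus computation fixes that power (with the correct sign).
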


\begin{proof}
For the action $A\cdot f=f(A^{-1}x)$, the discriminant satisfies
\[
\disc(f\circ A^{-1})=\det(A)^{-N(N-1)}\disc(f)
\]
by the discriminant calculation in \cite[proof of Theorem~5.1]{AV04}.  Thus, it is an equivariant section of $\det^{-N(N-1)}$ with zero divisor $\Delta$. The divisor sequence gives $[\cO_\Delta]=1-\psi^{N(N-1)}$, and restriction to the square-free locus gives $\psi^{N(N-1)}=1$.
\end{proof}

\section{\texorpdfstring{The $K_0$-ring of $\sH_g$}{The K0-ring of Hg}}\label{sec:khg}

Our calculation of $K_0(\sX_g)$ in \Cref{thm:eqpres} reduces the computation of $K_0(\sH_g)$ to identifying the weight-zero subring of $A_g$.  Since $F_g$ has $\mmu{g+1}$-weight zero and $G_g$ has weight one, \Cref{lem:gradedextraction} gives
\begin{equation}\label{eq:parityplan}
K_0(\sH_g)\cong(A_g)_0
\cong \frac{\bZ[\xi,\psi^{\pm1}]_0}
{F_g\,\bZ[\xi,\psi^{\pm1}]_0+
G_g\,\bZ[\xi,\psi^{\pm1}]_{-1}}.
\end{equation}
Before computing the generators and relations in \Cref{eq:parityplan}, we determine the ranks of the $\mmu{g+1}$-character summands.

\begin{proposition}\label{prop:characterranks}
For every $g\ge2$ and every $\mmu{g+1}$-weight $j\in\bZ/(g+1)$, the summand $K_0(\sX_g)_j$ is a free abelian group of rank
\[
\operatorname{rank}_{\bZ}K_0(\sX_g)_j=
\begin{cases}
8g+2,&g\text{ even},\\
8g+4,&g\text{ odd and }j\text{ even},\\
8g,&g\text{ odd and }j\text{ odd}.
\end{cases}
\]
In particular, $K_0(\sH_g)\cong K_0(\sX_g)_0$ is free of rank $8g+2$ for even $g$ and $8g+4$ for odd $g$. The generic fiber of $\operatorname{Spec}K_0(\sH_g)$ is reduced.
\end{proposition}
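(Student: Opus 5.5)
For even $g$ the argument is immediate. For odd $g$, the plan is a dévissage by multiplication by $\xi$ on $A_g\cong K_0(\sX_g)$ from \Cref{thm:eqpres}, combined with the total rank $(g+1)(8g+2)$ from \Cref{prop:gerbefinite}. Freeness of each summand $K_0(\sX_g)_j$ is automatic, since each is a direct summand of a finite free abelian group. Reducedness of the generic fiber of $\Spec K_0(\sH_g)$ follows from \Cref{prop:masterspec}: $K_0(\sH_g)\otimes\bQ$ is a direct factor (the weight-zero summand) of the reduced ring $A_g\otimes\bQ$, so it is a subring of a reduced ring and hence reduced.

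\emph{Even $g$.} Here $g+1$ is odd, so $2$ generates $\bZ/(g+1)$. Since $\psi$ is a unit of weight $2$, multiplication by $\psi$ is an isomorphism $K_0(\sX_g)_j\to K_0(\sX_g)_{j+2}$. Hence all $g+1$ summands have the same rank, namely $(g+1)(8g+2)/(g+1)=8g+2$.

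\emph{Odd $g$.} Multiplication by $\psi$ identifies all even-weight summands with one another and all odd-weight summands with one another. Write $r_e$ and $r_o$ for the two common ranks. Counting the $(g+1)/2$ summands of each parity against the total rank gives
\[
r_e+r_o=16g+4 .
\]
To get a second equation, compute $A_g/(\xi)$ by setting $\xi=0$, using $P_k(0,\psi)=0$ for odd $k$ and $P_{2m}(0,\psi)=(-\psi)^m$. This kills $G_g$, since $2N-5$ is odd, and turns $F_g$ into $1-\psi^N$. So
\[
A_g/(\xi)\cong\bZ[\psi]/(\psi^N-1),
\]
with $\bZ$-basis $1,\psi,\dots,\psi^{N-1}$ of weights $2k \bmod (g+1)$. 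Every such weight is even, so the odd-weight parts of $A_g/(\xi)$ vanish. Each even weight $j$ is hit by exactly $N/\tfrac{g+1}{2}=4$ exponents $k\in[0,N)$, so each even-weight part of $A_g/(\xi)$ is free of rank $4$.

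The key step, and the main obstacle, is showing that $\xi\colon (A_g)_{j-1}\to(A_g)_j$ is injective whenever $j-1$ is odd. By \Cref{lem:generic-evaluation} and \Cref{prop:masterspec}, an element $x$ with $\xi x=0$ vanishes at every point of $\Spec(A_g\otimes\overline{\bQ})$ except those with $\alpha+\beta=0$. The $\mmu{g+1}^\vee$-grading corresponds to the action of $\mmu{g+1}$ on points by $\{\alpha,\beta\}\mapsto\{\eta\alpha,\eta\beta\}$. A homogeneous element $x$ of weight $\chi$ then satisfies $x(\eta p)=\chi(\eta)\,x(p)$; I expect this to follow because $x$ is a polynomial in $\xi$ and $\psi$ whose monomials have weight $\chi$. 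Since $g$ is odd, $-1\in\mmu{g+1}$, and a point $\{\alpha,-\alpha\}$ is fixed by $\eta=-1$. For odd weight $\chi(-1)=-1$, so $x(p)=-x(p)=0$. Hence $x$ vanishes at all points, and $x=0$ by \Cref{lem:generic-evaluation}.

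The exact sequences
\[
0\to(A_g)_{j-1}\xrightarrow{\ \xi\ }(A_g)_j\to(A_g/\xi)_j\to0
\]
for even $j$ then give $r_e=r_o+4$. Combined with $r_e+r_o=16g+4$, this yields $r_e=8g+4$ and $r_o=8g$. Finally, $K_0(\sH_g)\cong K_0(\sX_g)_0$ by \Cref{thm:isotypic}, which gives the stated rank of $K_0(\sH_g)$ in both parities.
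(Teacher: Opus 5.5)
Your proposal is correct and follows the paper's proof essentially step for step. Both arguments use the $\psi$-periodicity of the ranks, the total rank $(g+1)(8g+2)$, and, for odd $g$, the exact sequences $0\to(A_g)_{j-1}\xrightarrow{\xi}(A_g)_j\to(A_g/(\xi))_j\to0$ with $A_g/(\xi)\cong\bZ[\psi]/(\psi^N-1)$. The only difference is how you prove injectivity of $\xi$ on odd weights: you evaluate at points and use that $-1$ fixes the points with $\alpha+\beta=0$, while the paper argues directly in the reduced ring $A_g$ (an odd-weight $h$ is divisible by $\xi$, say $h=\xi u$, so $\xi h=0$ forces $h^2=u\xi h=0$), and both rest on the same reducedness from \Cref{prop:masterspec}.
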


\begin{proof}
Each direct summand $(A_g)_j\cong K_0(\sX_g)_j$ is finite free over $\bZ$ by \Cref{prop:masterflat}, and we denote its rank by $r_j$. Multiplication by $\psi$ gives $(A_g)_j\cong(A_g)_{j+2}$, so $r_j=r_{j+2}$. If $g$ is even, addition by two cycles through all weights in $\bZ/(g+1)$, and the total rank $(g+1)(8g+2)$ gives $r_j=8g+2$ for every $j$.

For odd $g$, the ranks are constant within each parity, and we compare them using multiplication by $\xi$. The ring $A_g$ is reduced, since it embeds in the reduced ring $A_g\otimes\overline{\bQ}$ of \Cref{prop:masterspec}. Every odd-weight element $h$ is divisible by $\xi$, because $\psi$ has even weight. If $h=\xi u$ and $\xi h=0$, then $h^2=u\xi h=0$, so $h=0$. Thus, multiplication by $\xi$ is injective on each odd-weight summand, and for every even $j$ we have an exact sequence
\[
0\longrightarrow(A_g)_{j-1}\xrightarrow{\ \cdot\xi\ }(A_g)_j
\longrightarrow\bigl(A_g/(\xi)\bigr)_j\longrightarrow0.
\]
Since $A_g/(\xi)\cong\bZ[\psi]/(\psi^N-1)$, its basis $1,\psi,\ldots,\psi^{N-1}$ contains four elements of each even weight modulo $g+1$. Hence, $r_j-r_{j-1}=4$ for even $j$. Together with the total rank $(g+1)(8g+2)$, this gives rank $8g+4$ in even weights and $8g$ in odd weights.

Finally, $K_0(\sH_g)\cong(A_g)_0$, and $(A_g)_0\otimes\overline{\bQ}$ is a subring of the reduced ring $A_g\otimes\overline{\bQ}$ from \Cref{prop:masterspec}. This proves the assertions about $K_0(\sH_g)$.
\end{proof}

Di Lorenzo and Pirisi \cite[Section~8]{DLP21} showed that the universal conic $\sC/\iota\to\sH_g$ has nonzero Brauer class when $g$ is odd, so it cannot be a projective bundle. Our rank computations in \Cref{prop:characterranks} give another proof to this fact, since they imply that $K_0(\sC/\iota)$ has strictly smaller rank than the projective bundle formula would predict.

\begin{corollary}\label{cor:universalconic}
For odd $g\ge3$, the universal conic $\sC/\iota\to\sH_g$ is not isomorphic over $\sH_g$ to $\bP(\mathcal F)$ for any rank-two vector bundle $\mathcal F$ on $\sH_g$.
\end{corollary}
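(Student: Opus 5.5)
The plan is to compare ranks: if $\sC/\iota \cong \bP(\mathcal F)$ for a rank-two bundle $\mathcal F$ on $\sH_g$, then \Cref{thm:projectivebundle} would make $K_0(\sC/\iota)$ a free $K_0(\sH_g)$-module with basis $1,\ell$. By \Cref{prop:characterranks}, $K_0(\sC/\iota)$ would then be free abelian of rank $2(8g+4)=16g+8$. It therefore suffices to compute $K_0(\sC/\iota)$ independently and show that it has rank $16g+4$.

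To compute it, I would pass to the gerbe $\sX_g$. The pullback of $\sC/\iota$ to $\sX_g$ is the projective bundle $\bP(\cU)$ (or $\bP(\cU^\vee)$, depending on conventions). Indeed, over $[(\bA^{N+1}\smallsetminus\Delta)/\GL_2]$ the quotient of the universal curve by the involution is the $\GL_2$-equivariant $\bP^1$ on which the form $f$ lives. Thus $\sC/\iota \cong [(\bA^{N+1}\smallsetminus\Delta)\times\bP^1/(\GL_2/\mmu{g+1})]$. Its $\mmu{g+1}$-gerbe is
\[
\sY := [(\bA^{N+1}\smallsetminus\Delta)\times\bP^1/\GL_2] \cong \bP(\cU)\to\sX_g,
\]
and the central $\mmu{g+1}$ acts trivially on the space. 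By \Cref{thm:isotypic}, $K_0(\sC/\iota)\cong K_0(\sY)_0$. By \Cref{thm:projectivebundle}, $K_0(\sY)=K_0(\sX_g)\oplus K_0(\sX_g)\,\ell$ with $\ell=[\cO(-1)]$.

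The key step is determining the $\mmu{g+1}$-weight of $\ell$. Since $\cO_{\bP(\cU)}(-1)\subset\cU$ and the center acts on $\cU$ with weight one, $\ell$ is homogeneous of weight $1$. Then
\[
K_0(\sY)_0=K_0(\sX_g)_0\oplus K_0(\sX_g)_{-1}\,\ell,
\]
so its rank is $r_0+r_{-1}$. For odd $g$, the weight $-1$ is odd in $\bZ/(g+1)$ because $g+1$ is even. Hence \Cref{prop:characterranks} gives rank $(8g+4)+8g=16g+4\neq16g+8$, which is the desired contradiction.

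The step needing the most care is identifying $\sC/\iota$ with the rigidified quotient of $\bP(\cU)$, and checking that the linearization on $\cO(-1)$ has weight exactly one. If the identification used $\cO(1)$ instead, the weight would be $-1$, and the parity argument is unchanged because $\pm1$ are both odd. I would also make sure the isotypic decomposition applies: the central $\mmu{g+1}$ acts trivially on $\bP^1$ since scalars act trivially on projective space. As a consistency check, for even $g$ the same argument gives $2(8g+2)$, which agrees with $\sC/\iota\cong\bP(\cV)$.
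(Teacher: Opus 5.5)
Your proposal is correct and follows essentially the same argument as the paper. The paper also pulls back to $\bP(\cU)\to\sX_g$, notes that $\cO_{\bP(\cU)}(-1)$ has $\mmu{g+1}$-weight one, and takes the weight-zero part of the projective bundle formula to get rank $(8g+4)+8g=16g+4$, compared with the $16g+8$ that $\bP(\mathcal F)$ would have.
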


\begin{proof}
The pullback of $\sC/\iota$ to $\sX_g$ is $\bP(\cU)$, and the tautological line bundle $\cO_{\bP(\cU)}(-1)$ has $\mmu{g+1}$-weight one. Taking the weight-zero part of the projective bundle formula and descending along the $\mmu{g+1}$-gerbe gives an isomorphism of abelian groups
\[
K_0(\sC/\iota)\cong K_0(\sX_g)_0\oplus K_0(\sX_g)_{-1}
\]
by \Cref{thm:projectivebundle,thm:isotypic}. Thus, $K_0(\sC/\iota)$ is free of rank $(8g+4)+8g=16g+4$ by \Cref{prop:characterranks}, whereas $K_0(\bP(\mathcal F))$ would have rank $2(8g+4)=16g+8$ by the projective bundle formula.
\end{proof}

The bundles $\cV$ for even $g$ and $\cL,\cE$ for odd $g$ arise from representations of $\GL_2/\mmu{g+1}$, or equivalently from representations of $\GL_2$ on which $\mmu{g+1}$ acts trivially. Taking the weight-zero subring therefore gives generators $[\cV]$ and $[\det\cV]^{\pm1}$ in even genus, and $[\cL]^{\pm1}$ and $[\cE]$ in odd genus. The Chern classes $c_1(\cV),c_2(\cV)$ are the generators in the Chow presentation of Edidin--Fulghesu \cite[Section~5.1]{EF09}, while $c_1(\cL),c_2(\cE),c_3(\cE)$ are those in the presentation of Di Lorenzo \cite[Section~6]{DL21}.

\subsection{Even genus}\label{sec:even}

Throughout this subsection, let $g$ be even, and recall the rank-two bundle $\cV$ on $\sH_g$ induced by the standard representation under $\GL_2/\mmu{g+1}\cong\GL_2$. Let $\nu=[\cV]$ and $\lambda=[\det\cV]$. Since $\cV$ pulls back to $\cU\otimes(\det\cU)^{g/2}$ on $\sX_g$ \cite[Proposition~4.4(b)]{AV04}, we use the same notation for the corresponding elements of $R(\GL_2)$:
\[
\nu := \xi\psi^{g/2},\qquad \lambda := \psi^{g+1}.
\]
We express $F_g$ and $\psi^{g/2}G_g$, both of weight zero, as polynomials in $\nu$ and $\lambda^{\pm1}$ using $P_k(\nu,\lambda)=[\Sym^k\cV]$.

\begin{proposition}\label{prop:evenlattice}
The weight-zero subring of $\bZ[\xi, \psi^{\pm1}]$ is $\bZ[\nu, \lambda^{\pm1}]$, and the weight-$(-1)$ module is $\psi^{g/2}\,\bZ[\nu, \lambda^{\pm1}]$. Furthermore, $F_g$ and $\psi^{g/2}G_g$ satisfy
\[
\begin{aligned}
F_g=F^{\mathrm{even}}_g
&:=1+\nu\lambda^{1-g}P_{2g-1}(\nu,\lambda)-\lambda^{2-2g}P_{4g}(\nu,\lambda),\\
\psi^{g/2}G_g=G^{\mathrm{even}}_g
&:=\nu+\nu\lambda^{2-g}P_{2g-2}(\nu,\lambda)-\lambda^{3-2g}P_{4g-1}(\nu,\lambda).
\end{aligned}
\]
\end{proposition}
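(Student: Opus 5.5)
The plan has two independent parts: a lattice computation for the weight-graded pieces, and a direct substitution for the two relations.

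\textbf{The weight-zero subring and the weight-$(-1)$ module.} Write $g+1=m$, which is odd. The monomials $\xi^a\psi^b$ ($a\ge0$, $b\in\bZ$) form a $\bZ$-basis of $\bZ[\xi,\psi^{\pm1}]$, and $\xi^a\psi^b$ has weight $a+2b \bmod m$. Since $2$ is invertible mod $m$, there is an inverse: $2\cdot(g/2+1)=g+2\equiv1$. Substitute $\xi=\nu\psi^{-g/2}$, so that
\[
\bZ[\xi,\psi^{\pm1}]=\bZ[\nu,\psi^{\pm1}],
\]
with $\nu$ of weight $1+g\equiv0$ and $\psi$ of weight $2$. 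The basis becomes $\nu^a\psi^b$, of weight $2b \bmod m$. This weight is zero iff $m\mid b$, i.e.\ $\psi^b$ is a power of $\lambda=\psi^{m}$. Hence the weight-zero part is $\bZ[\nu,\lambda^{\pm1}]$. It is a polynomial ring, since $\nu,\psi$ are algebraically independent. Similarly, weight $-1$ means $2b\equiv-1\equiv g\pmod m$, i.e.\ $b\equiv g/2\pmod m$. This gives $\psi^{g/2}\bZ[\nu,\lambda^{\pm1}]$.

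\textbf{Rewriting $F_g$.} The key input is the homogeneity of $P_k$: with $\deg X=1$ and $\deg Y=2$, $P_k$ is weighted-homogeneous of degree $k$. Therefore
\[
P_k(\xi,\psi)=P_k(\nu\psi^{-g/2},\psi)=\psi^{-kg/2}P_k(\nu,\psi^{g+1})=\psi^{-kg/2}P_k(\nu,\lambda).
\]
To see this, factor the weight: a monomial $\nu^{k-2j}\psi^{-(k-2j)g/2}\psi^j$ equals $\psi^{-kg/2}\,\nu^{k-2j}\psi^{j(g+1)}$. With $N=2g+2$, apply this to $F_g=1+\xi\psi P_{2g-1}-\psi^2P_{4g}$. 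The middle term becomes
\[
\nu\psi^{-g/2}\,\psi\,\psi^{-(2g-1)g/2}P_{2g-1}=\nu\,\psi^{1-g^2}P_{2g-1}.
\]
Since $1-g^2=(1-g)(1+g)$, this is $\nu\lambda^{1-g}P_{2g-1}(\nu,\lambda)$. The last term is $\psi^{2-2g^2}=\lambda^{2-2g}$ times $P_{4g}$. This matches $F_g^{\mathrm{even}}$.

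\textbf{Rewriting $\psi^{g/2}G_g$.} Here $G_g=\xi+\xi\psi^2P_{2g-2}-\psi^3P_{4g-1}$, and the same exponent bookkeeping applies.
\begin{itemize}
\item First term: $\psi^{g/2}\xi=\nu$.
\item Middle term: the exponent of $\psi$ is $g/2-g/2+2-(2g-2)g/2=2+g-g^2=(2-g)(1+g)$, giving $\nu\lambda^{2-g}P_{2g-2}$.
\item Last term: the exponent is $g/2+3-(4g-1)g/2=3+g-2g^2=(3-2g)(1+g)$, giving $\lambda^{3-2g}P_{4g-1}$.
\end{itemize}

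The only delicate point is the change of variables $\xi\leftrightarrow\nu$ in the first part, which relies on $\psi$ being a unit; everything else is exponent arithmetic.
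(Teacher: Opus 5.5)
Your proof is correct and follows essentially the same route as the paper. The paper also counts weights of monomials to identify $\bZ[\nu,\lambda^{\pm1}]$ and its $\psi^{g/2}$-translate, and then substitutes the identity $P_k(\xi,\psi)=\psi^{-kg/2}P_k(\nu,\lambda)$ into $F_g$ and $\psi^{g/2}G_g$. The differences are cosmetic: you first pass to the coordinates $(\nu,\psi)$, so the weight depends only on the $\psi$-exponent, and you obtain the identity from the weighted homogeneity of $P_k$ rather than from symmetric powers of $\cU\otimes(\det\cU)^{g/2}$; you also write out the exponent bookkeeping that the paper leaves implicit.
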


\begin{proof}
A monomial $\xi^i\psi^j$ has weight $i+2j$ modulo $g+1$. Since $g+1$ is odd, weight zero is equivalent to $j-ig/2$ being divisible by $g+1$.  Thus $\xi^i\psi^j=\nu^i\lambda^{(j-ig/2)/(g+1)}$, which proves that the weight-zero subring is $\bZ[\nu,\lambda^{\pm1}]$. Multiplication by the unit $\psi^{g/2}$, of weight $-1$, identifies this subring with the weight-$(-1)$ module.

Taking symmetric powers of $\cU\otimes(\det\cU)^{g/2}$ gives
\[
P_k(\nu,\lambda)=\psi^{kg/2}P_k(\xi,\psi).
\]
Substituting this identity together with $\nu=\xi\psi^{g/2}$ and $\lambda=\psi^{g+1}$ into the formulas for $F_g^{\mathrm{even}}$ and $G_g^{\mathrm{even}}$ gives $F_g^{\mathrm{even}}=F_g$ and $G_g^{\mathrm{even}}=\psi^{g/2}G_g$.
\end{proof}

\begin{theorem}\label{thm:even}
Let $g\ge2$ be even.  Over a field of characteristic zero,
\[
K_0(\sH_g) \;\cong \; \bZ[\nu, \lambda^{\pm1}]\big/(F^{\mathrm{even}}_g, G^{\mathrm{even}}_g),
\]
a free abelian group of rank $8g+2$, with $\nu = [\cV]$ and $\lambda = [\det\cV]$.
\end{theorem}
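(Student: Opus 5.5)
The plan is to combine the results already in place. By \Cref{thm:isotypic} applied to the central $\mmu{g+1}\subset\GL_2$ acting trivially on $\bA^{N+1}\smallsetminus\Delta$, we have $K_0(\sH_g)\cong K_0(\sX_g)_0$. \Cref{thm:eqpres} identifies $K_0(\sX_g)$, as a $\mmu{g+1}^\vee$-graded ring, with $A_g=\bZ[\xi,\psi^{\pm1}]/(F_g,G_g)$.

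The ideal $(F_g,G_g)$ is generated by homogeneous elements of weights $0$ and $1$ (\Cref{prop:eqsurj}). So \Cref{lem:gradedextraction} gives the description \eqref{eq:parityplan}: the weight-zero part is the quotient of $\bZ[\xi,\psi^{\pm1}]_0$ by
\[
F_g\,\bZ[\xi,\psi^{\pm1}]_0+G_g\,\bZ[\xi,\psi^{\pm1}]_{-1}.
\]

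Next I will use \Cref{prop:evenlattice}. It identifies $\bZ[\xi,\psi^{\pm1}]_0=\bZ[\nu,\lambda^{\pm1}]$ and $\bZ[\xi,\psi^{\pm1}]_{-1}=\psi^{g/2}\bZ[\nu,\lambda^{\pm1}]$. Hence
\[
G_g\,\bZ[\xi,\psi^{\pm1}]_{-1}=(\psi^{g/2}G_g)\,\bZ[\nu,\lambda^{\pm1}]=G_g^{\mathrm{even}}\,\bZ[\nu,\lambda^{\pm1}],
\qquad
F_g\,\bZ[\xi,\psi^{\pm1}]_0=F_g^{\mathrm{even}}\,\bZ[\nu,\lambda^{\pm1}].
\]
Therefore the weight-zero ideal is exactly the ideal $(F_g^{\mathrm{even}},G_g^{\mathrm{even}})$ of $\bZ[\nu,\lambda^{\pm1}]$, which yields the presentation. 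Freeness of rank $8g+2$ is then \Cref{prop:characterranks}.

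It remains to confirm that the abstract generators $\nu=\xi\psi^{g/2}$ and $\lambda=\psi^{g+1}$ correspond to $[\cV]$ and $[\det\cV]$. Here I will use the identification $\cV|_{\sX_g}\cong\cU\otimes(\det\cU)^{g/2}$ from \cite[Proposition~4.4(b)]{AV04}. Taking determinants gives $\det\cV$ pulling back to $\psi\cdot\psi^{g}=\psi^{g+1}$. Compatibility of the descent isomorphism in \Cref{thm:isotypic}(3) with pullback then matches these classes under $K_0(\sH_g)\cong K_0(\sX_g)_0$.

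The only delicate point is checking that the descent isomorphism is a ring isomorphism carrying the classes of bundles pulled back from $\sH_g$ to themselves. This is part (3) of \Cref{thm:isotypic}, so no real obstacle remains; the proof is essentially bookkeeping.
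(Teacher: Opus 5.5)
Your proposal is correct and follows essentially the same route as the paper. The paper likewise obtains the presentation from \eqref{eq:parityplan}, which combines the isotypic decomposition, \Cref{thm:eqpres}, and \Cref{lem:gradedextraction}, together with \Cref{prop:evenlattice}; it takes freeness and the rank from \Cref{prop:characterranks}, and it identifies $\nu,\lambda$ with $[\cV],[\det\cV]$ via \cite[Proposition~4.4(b)]{AV04} just before \Cref{prop:evenlattice}.
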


\begin{proof}
The presentation follows from \eqref{eq:parityplan} and \Cref{prop:evenlattice}, while \Cref{prop:characterranks} gives finite freeness and the rank.
\end{proof}

The inclusion $(A_g)_0\subset A_g$ also determines the points of $\Spec(K_0(\sH_g)\otimes\overline{\bQ})$. Indeed, $(A_g)_0\otimes\overline{\bQ}$ is the invariant subring for the scalar $\mmu{g+1}$-action on the finite reduced algebra $A_g\otimes\overline{\bQ}$, so its spectrum is the set of orbits of this action. Restricting the characters $\nu=\xi\psi^{g/2}$ and $\lambda=\psi^{g+1}$ sends the pair $\{\alpha,\beta\}$ as in \Cref{prop:masterspec} to
\[
\{\alpha(\alpha \beta)^{g/2},\ \beta(\alpha \beta)^{g/2}\},
\]
whose sum and product are $\nu$ and $\lambda$. In the statement below, $\alpha,\beta$ denote these eigenvalues on $\cV$.

\begin{proposition}\label{prop:evenspectrum}
For even $g$, the scheme $\Spec(K_0(\sH_g)\otimes\overline{\bQ})$ consists of $8g+2$ reduced points. These points have coordinates $(\nu,\lambda)=(\alpha+\beta,\alpha \beta)$ for the following unordered pairs $\{\alpha,\beta\}$:
\begin{enumerate}[leftmargin=2.2em]
\item $\{\zeta, \zeta^{-1}\}$ with $\zeta\in\mmu{2g}\cup\mmu{2g+2}$ taken up to replacing $\zeta$ by $\zeta^{-1}$, giving $2g+1$ points with $\lambda = 1$, among them $(2, 1)$ and $(-2, 1)$.
\item $\{\zeta, -\zeta^{-1}\}$ with $\zeta^{2g+2}=1$ or $\zeta^{2g} = -1$, taken up to the substitution $\zeta\mapsto-\zeta^{-1}$, giving $2g+1$ points with $\lambda = -1$.
\item $\{\zeta \beta, \beta\}$ with $\zeta\in\mmu{2g+1}\smallsetminus\{1\}$ and $\beta^2\in\{\zeta^{g-1},\zeta^g\}$, modulo exchange of the two entries, giving $4g$ points with $\lambda\in\{\zeta^{\,g}, \zeta^{\,g+1}\}\subset \mmu{2g+1}\smallsetminus\{1\}$.
\end{enumerate}
The elimination ideal $((F_g^{\mathrm{even}},G_g^{\mathrm{even}})\otimes\bQ)\cap\bQ[\lambda^{\pm1}]$ is generated by $(\lambda^2-1)(1+\lambda+\dots+\lambda^{2g})$. Moreover, $(\lambda^2-1)(1+\lambda+\dots+\lambda^{2g})\in (F_g^{\mathrm{even}},G_g^{\mathrm{even}})\subset\bZ[\nu,\lambda^{\pm1}]$.
\end{proposition}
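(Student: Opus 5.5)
The plan is to read off $\Spec(K_0(\sH_g)\otimes\overline{\bQ})$ from \Cref{prop:masterspec} as an orbit space, then identify the orbits with the listed pairs by a direct check and a count. By \Cref{thm:eqpres} and \eqref{eq:extraction}, $K_0(\sH_g)\otimes\overline{\bQ}\cong(A_g)_0\otimes\overline{\bQ}$. This is the invariant subring of the finite reduced algebra $A_g\otimes\overline{\bQ}\cong\prod_x\overline{\bQ}$ under the scalar action of $\mmu{g+1}$. On points, $\eta\in\mmu{g+1}$ sends $\{\alpha,\beta\}\mapsto\{\eta\alpha,\eta\beta\}$.

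This action is free on the points of \Cref{prop:masterspec}. Indeed, $\{\eta\alpha,\eta\beta\}=\{\alpha,\beta\}$ forces either $\eta=1$, or $\eta\alpha=\beta$ and $\eta\beta=\alpha$. The latter gives $\eta^2=1$, hence $\eta=1$ because $g+1$ is odd. The invariant subring of $\prod_x\overline{\bQ}$ is therefore $\prod_{\text{orbits}}\overline{\bQ}$, so $\Spec$ consists of exactly $(g+1)(8g+2)/(g+1)=8g+2$ reduced points. Each orbit is recorded by the $\mmu{g+1}$-invariant coordinates $(\nu,\lambda)$, that is, by the unordered pair $\{a,b\}=\{\alpha(\alpha\beta)^{g/2},\beta(\alpha\beta)^{g/2}\}$ of eigenvalues on $\cV$. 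Distinct orbits give distinct $(\nu,\lambda)$, since these coordinates generate the invariant ring.

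To identify the $8g+2$ pairs, I will push each type of \Cref{prop:masterspec} through this map, using $\lambda=ab=(\alpha\beta)^{g+1}$. Type (1) has $\lambda=(\alpha\beta)^{g+1}=\pm1$ because $(\alpha\beta)^{2g+2}=1$. Types (2) and (3) give $\beta=\alpha\zeta$-type relations with $\zeta^{2g+1}=1$. For instance, type (3) gives $(\beta/\alpha)^{N-2}=1$ and $(\alpha\beta)^{N}=1$, which leads to the $\mmu{2g+1}$ cases after taking $(g+1)$-st powers.

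I would not fully invert these correspondences. Instead, for each listed pair $\{a,b\}$ I will exhibit a lift $(\alpha,\beta)$, by choosing $\psi$ with $\psi^{g+1}=\lambda$ and setting $\alpha=a\psi^{-g/2}$, and check that it satisfies one of the conditions of \Cref{prop:masterspec}. Then I will check that the listed pairs are pairwise distinct (they are separated by $\lambda$ and then by $\nu$) and count $(2g+1)+(2g+1)+4g=8g+2$. Since the total number of points is $8g+2$, the list is exhaustive. For case (3), $\lambda=\zeta\beta^2\in\{\zeta^g,\zeta^{g+1}\}$; because $\gcd(g,2g+1)=1$, these values run over all of $\mmu{2g+1}\smallsetminus\{1\}$.

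The elimination statement then follows formally. Over $\bQ$, the ideal $(F^{\mathrm{even}}_g,G^{\mathrm{even}}_g)\otimes\overline{\bQ}$ is radical and zero-dimensional, so its intersection with $\overline{\bQ}[\lambda^{\pm1}]$ is the vanishing ideal of the set of $\lambda$-coordinates. This set is $\{1,-1\}\cup(\mmu{2g+1}\smallsetminus\{1\})$, which gives the generator $(\lambda^2-1)(1+\lambda+\dots+\lambda^{2g})$. Descent from $\overline{\bQ}$ to $\bQ$ is automatic, since this polynomial is rational and elimination commutes with field extension. For the integral membership, the element $h=(\lambda^2-1)(1+\dots+\lambda^{2g})$ maps to zero in $K_0(\sH_g)\otimes\bQ$. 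By \Cref{thm:even}, $K_0(\sH_g)=\bZ[\nu,\lambda^{\pm1}]/(F^{\mathrm{even}}_g,G^{\mathrm{even}}_g)$ is torsion-free, so $h$ is already zero there, i.e.\ $h\in(F^{\mathrm{even}}_g,G^{\mathrm{even}}_g)$.

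The main obstacle is the bookkeeping in the middle step. One must verify that each listed family (including the two sign conditions $\zeta^{2g+2}=1$ or $\zeta^{2g}=-1$ in case (2), and the two choices of $\beta^2$ in case (3)) has the stated size after the identifications $\zeta\mapsto\zeta^{-1}$, $\zeta\mapsto-\zeta^{-1}$, and exchange of entries. One must also produce the lifts without double counting. If the lifting check proves messy, the fallback is to verify $F^{\mathrm{even}}_g=G^{\mathrm{even}}_g=0$ directly at each pair. For this I would use $P_k(a+b,ab)=(a^{k+1}-b^{k+1})/(a-b)$ from \eqref{eq:Pclosed}, exactly as in the proof of \Cref{prop:masterspec}, together with the analogue of the factorization \eqref{eq:branch} for the weight-zero relations.
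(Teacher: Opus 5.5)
Your argument is correct, but it runs in the opposite direction from the paper's.

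\textbf{The paper's route.} The paper pushes the points of \Cref{prop:masterspec} forward. Writing $\rho=\alpha/\beta$, it turns the three types into conditions on $(\rho,\beta)$ and computes $\lambda=\rho^{g+1}\beta^N$ and $\bigl(\beta(\alpha\beta)^{g/2}\bigr)^2=\rho^g\beta^N$. It reads the three cases off directly, and the total $8g+2$ comes from \Cref{prop:characterranks}.

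\textbf{Your route.} You obtain $8g+2$ from freeness of the scalar $\mu_{g+1}$-action. This is the same fact as the paper's ``multiplication by the weight-two unit $\psi$'' argument, since both rest on $g+1$ being odd. You then show the list lies in the image by lifting, and finish with distinctness and the count. The lifting step, which you leave as a plan, does go through. With $\psi^{g+1}=\lambda$, every identity you need involves only $\psi^{-gN/2}=\lambda^{-g}$, and the lifts land as follows:
\begin{itemize}
\item Case (1) with $\zeta\in\mu_{2g+2}$, and case (2) with $\zeta^{2g+2}=1$, lift to type (1).
\item Case (1) with $\zeta\in\mu_{2g}\smallsetminus\{\pm1\}$, and case (2) with $\zeta^{2g}=-1$, lift to type (3). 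Here $\alpha^{N-1}\beta$ and $\alpha\beta^{N-1}$ equal $\zeta^{\pm2g}\lambda^{-g}$, up to the sign coming from the entry $-\zeta^{-1}$, and this is $1$.
\item In case (3) the lift has $\alpha/\beta=\zeta$. When $\beta^2=\zeta^{g-1}$ one gets $\beta^N=\zeta^{-1}$, which is type (2). When $\beta^2=\zeta^g$ one gets $\beta^N=1$ and $\alpha^N=\zeta$, which is type (2) with the entries exchanged.
\end{itemize}

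\textbf{Comparison.} The paper's direction produces this dictionary between types and cases as a byproduct. Yours is a mechanical verification that never has to invert the map. The elimination and integral-membership steps are the same as in the paper.

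\textbf{A correction.} Your heuristic sentence saying that types (2) and (3) give $\zeta^{2g+1}=1$, and that type (3) ``leads to the $\mu_{2g+1}$ cases,'' is wrong. Type (3) has $\rho^{N-2}=\rho^{2g}=1$. As you yourself write, it also has $(\alpha\beta)^N=1$, so $\lambda=(\alpha\beta)^{g+1}=\pm1$. It therefore supplies the part of case (1) with $\zeta\in\mu_{2g}\smallsetminus\{\pm1\}$ and the part of case (2) with $\zeta^{2g}=-1$. Case (3) comes from type (2): there $\rho^{2g+1}=1$ and $\beta^N=\rho^{-1}$, hence $\lambda=\rho^g$. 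Your actual proof does not use that sentence, so nothing breaks, but you should correct or remove it.
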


\begin{proof}
For a pair $\{\alpha,\beta\}$ in \Cref{prop:masterspec}, let $\rho=\alpha/\beta$. After choosing the ordering in its second case, the three conditions become
\begin{equation}\label{eq:restriction-conditions}
\begin{array}{c|c|c}
&\text{condition on }\rho&\text{condition on }\beta\\ \hline
(1)&\rho^N=1&\beta^N=1\\
(2)&\rho^{N-1}=1,\ \rho\ne1&\beta^N=\rho^{-1}\\
(3)&\rho^{N-2}=1,\ \rho\ne1&\beta^N=\rho^{-1}.
\end{array}
\end{equation}
The eigenvalues $\alpha(\alpha \beta)^{g/2}$ and $\beta(\alpha \beta)^{g/2}$ have ratio $\rho$. Moreover,
\[
\lambda=\rho^{g+1}\beta^N,
\qquad
\bigl(\beta(\alpha \beta)^{g/2}\bigr)^2=\rho^g \beta^N.
\]
The first and third rows give $\lambda=\pm1$. In the first row, both eigenvalues on $\cV$ are $(2g+2)$-th roots of unity, whereas in the third, each eigenvalue $\zeta$ satisfies $\zeta^{2g}=\lambda$. Together, these give cases (1) and (2) of the statement. In the second row, $\bigl(\beta(\alpha \beta)^{g/2}\bigr)^2=\rho^{g-1}$, and exchanging the eigenvalues gives the alternative equation $\bigl(\beta(\alpha\beta)^{g/2}\bigr)^2=\rho^g$, after replacing $\rho$ by $\rho^{-1}$. For fixed $\rho$, varying $\beta$ among the solutions of its equation gives both square roots, so all the listed pairs occur.

For $\lambda=1$, the union $\mmu{2g}\cup\mmu{2g+2}$ has $4g$ elements, of which only $1$ and $-1$ equal their own inverses, and hence gives $2g+1$ pairs. For $\lambda=-1$, the sets defined by $\zeta^{2g+2}=1$ and $\zeta^{2g}=-1$ are disjoint and have total size $4g+2$. The involution $\zeta\mapsto-\zeta^{-1}$ acts freely on their union, again giving $2g+1$ pairs. Finally, each of the $g$ unordered pairs $\{\zeta,\zeta^{-1}\}$ in $\mmu{2g+1}\smallsetminus\{1\}$ gives four points, since $\zeta^{g-1}$ and $\zeta^g$ are distinct and nonzero, and each has two square roots. Their values of $\lambda$ differ from $\pm1$, so the three cases are disjoint.

The values of $\lambda$ are $\pm1$ and $\mmu{2g+1}\smallsetminus\{1\}$, because raising to the $g$-th power permutes $\mmu{2g+1}$. Since $K_0(\sH_g)\otimes\bQ$ is reduced, a Laurent polynomial in $\bQ[\lambda^{\pm1}]$ vanishes in this ring if and only if it is divisible by $(\lambda^2-1)(1+\cdots+\lambda^{2g})$. Torsion-freeness of $K_0(\sH_g)$ then gives $(\lambda^2-1)(1+\cdots+\lambda^{2g})\in(F_g^{\mathrm{even}},G_g^{\mathrm{even}})$ over $\bZ$.
\end{proof}

At $g=2$, we recover Edidin--Hu's presentation of $K_0(\sM_2)$ \cite[Theorem~1.1]{EH24}. We also work out the presentations of $K_0(\sM_2)$ and $K_0(\overline{\sM}_2)$ in terms of $K$-theoretic Chern classes in \Cref{app:edidinhu}.

\subsection{Odd genus}\label{sec:odd}

Throughout this subsection, let $g$ be odd, and recall the rank-three bundle $\cE:=\pi_*\omega_{\sC/\sH_g}^{-1}(\sW)$ and the line bundle $\cL:=\pi_*\omega_{\sC/\sH_g}^{\otimes(g+1)/2}(\tfrac{1-g}{2}\sW)$ on $\sH_g$, where $\pi\colon\sC\to\sH_g$ is the universal curve and $\sW$ is its divisor of Weierstrass points \cite[Section~6]{DL21}. Let $\sigma=[\cE]$ and $\lambda=[\cL]$. Under the isomorphism $\GL_2/\mmu{g+1}\cong\bG_m\times\PGL_2$, $[A]\mapsto(\det(A)^{(g+1)/2},[A])$, of \cite[Proposition~4.4(a)]{AV04}, these bundles correspond to the three-dimensional irreducible representation of $\PGL_2$ and the weight-one character of $\bG_m$, respectively. Their pullbacks to $\sX_g$ are therefore $\Sym^2\cU\otimes(\det\cU)^{-1}$ and $(\det\cU)^{(g+1)/2}$, so we use the same notation for the corresponding elements of $R(\GL_2)$:
\[
\sigma := \xi^2\psi^{-1}-1,\qquad \lambda := \psi^{(g+1)/2}.
\]

\begin{proposition}\label{prop:oddlattice}\raggedright
For odd $g$, the $\mmu{g+1}$-weight-zero subring of $\bZ[\xi, \psi^{\pm1}]$ is $\bZ[\sigma, \lambda^{\pm1}]$, and the weight-$(-1)$ module is $\xi\psi^{(g-1)/2}\,\bZ[\sigma, \lambda^{\pm1}]$.
\end{proposition}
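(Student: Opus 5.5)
The plan is to mirror the proof of \Cref{prop:evenlattice}, with one extra step to replace $\xi^2$ by $\sigma$. First I would identify the weight-zero subring of $\bZ[\xi,\psi^{\pm1}]$ as a monomial lattice. A monomial $\xi^i\psi^j$ has weight $i+2j$ modulo $g+1$, and $g+1$ is even, so weight zero forces $i$ to be even. Writing $i=2a$, the condition becomes $a+j\equiv0\pmod{(g+1)/2}$.

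The weight-zero part is therefore spanned by the elements
\[
\xi^{2a}\psi^{j}=(\xi^2\psi^{-1})^{a}\,\psi^{a+j}.
\]
Here $\psi^{a+j}$ is an integral power of $\psi^{(g+1)/2}=\lambda$. So the weight-zero subring equals $\bZ[\xi^2\psi^{-1},\lambda^{\pm1}]$. Since $\sigma=\xi^2\psi^{-1}-1$, this ring is also $\bZ[\sigma,\lambda^{\pm1}]$. The reverse inclusion is immediate, because $\sigma$ and $\lambda^{\pm1}$ have weight zero by \eqref{eq:weights}.

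Next I would treat the weight-$(-1)$ module. A monomial $\xi^i\psi^j$ of weight $-1$ has $i+2j\equiv-1$, so $i$ is odd. Write $\xi^i\psi^j=\xi\psi^{(g-1)/2}\cdot\xi^{i-1}\psi^{j-(g-1)/2}$. The factor $\xi\psi^{(g-1)/2}$ has weight $1+(g-1)=g\equiv-1$. Hence the cofactor $\xi^{i-1}\psi^{j-(g-1)/2}$ has weight zero, and it lies in $\bZ[\sigma,\lambda^{\pm1}]$ by the first step. This gives one inclusion. The other is clear, since weight is additive and $\xi\psi^{(g-1)/2}$ has weight $-1$.

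Since the grading on $\bZ[\xi,\psi^{\pm1}]$ is by monomials, every homogeneous component is spanned by the monomials of that weight. This justifies reducing both statements to monomials. No step is a real obstacle. The only point needing care is the congruence bookkeeping: one must use that $i$ is even in weight zero (because $g+1$ is even) and check that the exponent $a+j$ is a multiple of $(g+1)/2$.
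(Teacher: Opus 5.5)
Your proposal is correct and follows the paper's proof essentially verbatim. Both reduce to monomials, use that $g+1$ is even to force $i=2m$ with $(g+1)/2\mid m+j$, rewrite $\xi^{2m}\psi^j=(\xi^2\psi^{-1})^m\psi^{m+j}$ as a polynomial in $\sigma$ and $\lambda^{\pm1}$, and handle weight $-1$ by dividing a monomial with odd $\xi$-exponent by $\xi\psi^{(g-1)/2}$. Your explicit checks of the reverse inclusions and of the weight $g\equiv-1$ of $\xi\psi^{(g-1)/2}$ are details the paper leaves implicit.
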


\begin{proof}
A weight-zero monomial $\xi^i\psi^j$ satisfies $i+2j\equiv0\pmod{g+1}$.  Since $g+1$ is even, we have $i=2m$ and $(g+1)/2$ divides $m+j$.  Thus,
\[
\xi^{2m}\psi^j
=(\xi^2\psi^{-1})^m\psi^{m+j}
=(\sigma+1)^m\lambda^{2(m+j)/(g+1)}.
\]
Since $\sigma$ and $\lambda^{\pm1}$ have weight zero, the above equality identifies the weight-zero subring with $\bZ[\sigma,\lambda^{\pm1}]$. Every monomial of weight $-1$ has an odd exponent of $\xi$, so division by $\xi\psi^{(g-1)/2}$ gives a monomial of weight zero. Hence, the weight-$(-1)$ module is $\xi\psi^{(g-1)/2}\bZ[\sigma,\lambda^{\pm1}]$.
\end{proof}

To express $F_g$ and $\xi\psi^{(g-1)/2}G_g$ in $\sigma$ and $\lambda^{\pm1}$, we use the irreducible representations of $\PGL_2$.  The torus characters of $\Sym^2\cU\otimes(\det\cU)^{-1}$ are $\alpha/\beta, 1, \beta/\alpha$, so $\sigma = \alpha/\beta + 1 + \beta/\alpha$.  By \Cref{eq:Qclosed}, the character $\sum_{j=-k}^k(\alpha/\beta)^j$ of the $(2k+1)$-dimensional irreducible representation of $\PGL_2$ equals $Q_k(\sigma)$.

\begin{theorem}\label{thm:oddforms}\label{thm:odd}
Let $g\ge3$ be odd. We have the relations
\[
F_g \;=\; F^{\mathrm{odd}}_g \;:=\;
1 + \lambda^2(Q_g(\sigma) + Q_{g-1}(\sigma)) - \lambda^4Q_{2g}(\sigma),
\qquad
\xi\psi^{(g-1)/2}\,G_g \;=\; \lambda\,G^{\mathrm{odd}}_g,
\]
where $G^{\mathrm{odd}}_g := (\sigma+1)\bigl(1 + \lambda^2Q_{g-1}(\sigma)\bigr) - \lambda^4(Q_{2g}(\sigma) + Q_{2g-1}(\sigma))$. Consequently,
\[
K_0(\sH_g) \;\cong\;
\bZ[\sigma, \lambda^{\pm1}]\big/(F^{\mathrm{odd}}_g, G^{\mathrm{odd}}_g).
\]
This is a free abelian group of rank $8g+4$, with $\sigma=[\cE]$ and $\lambda=[\cL]$.
\end{theorem}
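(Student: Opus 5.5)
The plan mirrors the even-genus argument. The two polynomial identities come from restricting to the torus and writing everything in $\rho=\alpha/\beta$. The presentation then follows from \eqref{eq:parityplan} together with \Cref{prop:oddlattice}.

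Since $R(\GL_2)\hookrightarrow R(T)$ is injective, I verify both identities in $\bZ[\alpha^{\pm1},\beta^{\pm1}]$. Write $\psi=\alpha\beta$, $\lambda^2=\psi^{g+1}$, $\lambda^4=\psi^{2g+2}$ and $N=2g+2$. By \eqref{eq:Pclosed}, $\psi^{-k/2}P_k(\xi,\psi)$ is the symmetric function $\sum_{i=0}^k\alpha^i\beta^{k-i}(\alpha\beta)^{-k/2}$. For even $k=2m$, with $\rho=\alpha/\beta$, this equals $\sum_{j=-m}^m\rho^{j}$, which is $Q_m(\sigma)$ by \eqref{eq:Qclosed}.

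\textbf{First identity.} Rewrite each term of $F_g$:
\begin{itemize}
\item $\psi^2P_{2N-4}=\psi^{2+(N-2)}\cdot\psi^{-(N-2)}P_{2N-4}=\lambda^4Q_{2g}(\sigma)$, since $2+(N-2)=N=2g+2$.
\item $\xi\psi P_{N-3}$: here $N-3=2g-1$ is odd. Use $\xi P_{2g-1}=P_{2g}+\psi P_{2g-2}$, which is the recurrence of \Cref{lem:family}(1). This gives $\xi\psi P_{2g-1}=\psi^{g+1}\bigl(Q_g(\sigma)+Q_{g-1}(\sigma)\bigr)=\lambda^2\bigl(Q_g+Q_{g-1}\bigr)$.
\end{itemize}
Together these give $F_g=F^{\mathrm{odd}}_g$.

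\textbf{Second identity.} Multiply $G_g$ by $\xi\psi^{(g-1)/2}$ and use $\xi^2=\psi(\sigma+1)$.
\begin{itemize}
\item The term $\xi$ becomes $\xi^2\psi^{(g-1)/2}=(\sigma+1)\psi^{(g+1)/2}=\lambda(\sigma+1)$.
\item The term $\xi\psi^2P_{N-4}$ becomes $\xi^2\psi^{(g+3)/2}P_{2g-2}=(\sigma+1)\psi^{(g+5)/2}\psi^{g-1}Q_{g-1}=\lambda^3(\sigma+1)Q_{g-1}$, since $(g+5)/2+g-1=3(g+1)/2$.
\item The term $\psi^3P_{2N-5}$ becomes $\psi^{(g+5)/2}\xi P_{4g-1}=\psi^{(g+5)/2}\bigl(P_{4g}+\psi P_{4g-2}\bigr)=\psi^{(g+5)/2+2g}\bigl(Q_{2g}+Q_{2g-1}\bigr)=\lambda^5\bigl(Q_{2g}+Q_{2g-1}\bigr)$, since $(g+5)/2+2g=5(g+1)/2$.
\end{itemize}
Factoring out $\lambda$ gives $\xi\psi^{(g-1)/2}G_g=\lambda G^{\mathrm{odd}}_g$.

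The main obstacle I expect is bookkeeping. The exponents of $\psi$ must combine to exactly $(g+1)/2$ times an integer, and odd-index $P$'s have to be converted via the recurrence rather than directly. I would double-check each exponent at $g=3$.

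\textbf{Presentation.} By \eqref{eq:parityplan} and \Cref{lem:gradedextraction}, $K_0(\sH_g)\cong\bZ[\xi,\psi^{\pm1}]_0/\bigl(F_g\,\bZ[\xi,\psi^{\pm1}]_0+G_g\,\bZ[\xi,\psi^{\pm1}]_{-1}\bigr)$. \Cref{prop:oddlattice} identifies the weight-zero subring with $\bZ[\sigma,\lambda^{\pm1}]$ and the weight-$(-1)$ module with $\xi\psi^{(g-1)/2}\bZ[\sigma,\lambda^{\pm1}]$. Hence $G_g\,\bZ[\xi,\psi^{\pm1}]_{-1}=\lambda G^{\mathrm{odd}}_g\,\bZ[\sigma,\lambda^{\pm1}]$, and this equals $G^{\mathrm{odd}}_g\,\bZ[\sigma,\lambda^{\pm1}]$ because $\lambda$ is a unit. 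That gives the presentation. Freeness of rank $8g+4$ is \Cref{prop:characterranks}.

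Finally, $\sigma=[\cE]$ and $\lambda=[\cL]$ by the identification of pullbacks recalled before \Cref{prop:oddlattice}.
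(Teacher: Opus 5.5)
Your proposal is correct and follows the paper's proof essentially step for step. It uses $P_{2k}(\xi,\psi)=\psi^kQ_k(\sigma)$, the recurrence $\xi P_{2k+1}=P_{2k+2}+\psi P_{2k}$, and $\xi^2=\psi(\sigma+1)$ for both identities (your exponent bookkeeping checks out), then \eqref{eq:parityplan}, \Cref{prop:oddlattice}, and invertibility of $\lambda$ for the presentation, with the rank from \Cref{prop:characterranks}.
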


\begin{proof}
The polynomial identities \eqref{eq:Pclosed} and \eqref{eq:Qclosed} give $P_{2k}(\xi,\psi)=\psi^kQ_k(\sigma)$ for $k\ge0$, and the recurrence for $P$ then gives
\[
\xi P_{2k+1}(\xi,\psi)=P_{2k+2}(\xi,\psi)+\psi P_{2k}(\xi,\psi)
=\psi^{k+1}\bigl(Q_{k+1}(\sigma)+Q_k(\sigma)\bigr).
\]
Substituting these identities into $F_g$ and $G_g$ with $\xi^2=\psi(\sigma+1)$ and $\lambda=\psi^{(g+1)/2}$ yields $F_g=F_g^{\mathrm{odd}}$ and $\xi\psi^{(g-1)/2}G_g=\lambda G_g^{\mathrm{odd}}$. By \eqref{eq:parityplan} and \Cref{prop:oddlattice}, the kernel of $\bZ[\sigma,\lambda^{\pm1}]\to K_0(\sH_g)$ is $(F_g^{\mathrm{odd}},\lambda G_g^{\mathrm{odd}})=(F_g^{\mathrm{odd}},G_g^{\mathrm{odd}})$, since $\lambda$ is a unit, proving the desired presentation. Finite freeness and the rank then follow from \Cref{prop:characterranks}.
\end{proof}

As in even genus, we obtain the following description of $\Spec(K_0(\sH_g)\otimes\overline{\bQ})$ by restricting the points of $\Spec(A_g\otimes\overline{\bQ})$ to the weight-zero subring. The characters $\sigma=\xi^2\psi^{-1}-1$ and $\lambda=\psi^{(g+1)/2}$ send $\{\alpha,\beta\}$ to
\[
(\sigma,\lambda)=\bigl(\alpha/\beta+1+\beta/\alpha,\ (\alpha \beta)^{(g+1)/2}\bigr).
\]
Thus, $\sigma$ determines the ratio $\rho=\alpha/\beta$ up to taking its inverse, and the equations in \Cref{prop:masterspec} give the following possibilities for $\rho$ and $\lambda$.

\begin{proposition}\label{prop:oddspectrum}
Let $g$ be odd.  The scheme $\Spec(K_0(\sH_g)\otimes\overline{\bQ})$ consists of $8g+4$ reduced points $(\sigma,\lambda)=(\rho+1+\rho^{-1},\lambda)$, with $(\rho,\lambda)$ satisfying one of the following conditions:
\begin{enumerate}[leftmargin=2.4em]
\item $\lambda = \pm1$ and $\rho\in\mmu{g}\cup\mmu{g+1}$ with $\rho$ and $\rho^{-1}$ identified, giving $2(g+1)$ points.
\item $\lambda = \pm i$ and $\rho^{\,g} = -1$ or $\rho^{\,g+1} = -1$, with $\rho$ and $\rho^{-1}$ identified, giving $2(g+1)$ points.
\item $\lambda^2\in\{\rho^{\,g}, \rho^{\,g+1}\}$ with $\rho\in\mmu{2g+1}\smallsetminus\{1\}$, identifying $\rho$ with $\rho^{-1}$, giving $4g$ points with $\lambda^4\neq1$, on which $\sigma + 1 = (1+\lambda^4)^2\lambda^{-4}$.
\end{enumerate}
The elimination ideal $((F_g^{\mathrm{odd}},G_g^{\mathrm{odd}})\otimes\bQ)\cap\bQ[\lambda^{\pm1}]$ is generated by $(\lambda^2+1)(\lambda^{4g+2}-1)$. Moreover, $(\lambda^2+1)(\lambda^{4g+2}-1)\in (F_g^{\mathrm{odd}},G_g^{\mathrm{odd}})\subset\bZ[\sigma,\lambda^{\pm1}]$.
\end{proposition}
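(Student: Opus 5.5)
The plan mirrors the proof of \Cref{prop:evenspectrum}. Since $(A_g)_0\otimes\overline{\bQ}$ is the $\mmu{g+1}$-invariant subring of the finite reduced algebra $A_g\otimes\overline{\bQ}$, its points are the $\mmu{g+1}$-orbits of the points in \Cref{prop:masterspec}. Each orbit is determined by the image $(\sigma,\lambda)=(\rho+1+\rho^{-1},(\alpha\beta)^{(g+1)/2})$, where $\rho=\alpha/\beta$. The number of points is already known to be $8g+4$ by \Cref{prop:characterranks}, and they are reduced because $(A_g)_0\otimes\overline{\bQ}$ is a subring of a reduced ring. So it suffices to compute the images of the three families and check that the listed sets have the right sizes.

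We order each pair as in \eqref{eq:restriction-conditions} and write $\lambda=\rho^{(g+1)/2}\beta^{g+1}$, using $\alpha\beta=\rho\beta^2$. Then $\lambda^2=\rho^{g+1}\beta^N$, and each row is handled as follows.
\begin{enumerate}[leftmargin=2.4em]
\item Row (1): $\rho^N=1$ and $\beta^N=1$. As $\beta$ varies, $\lambda^2$ takes the values $\rho^{g+1}$, which is $\pm1$. Splitting by whether $\rho^{g+1}=1$ or $\rho^{g+1}=-1$ yields $\lambda=\pm1$ with $\rho\in\mmu{g+1}$, or $\lambda=\pm i$ with $\rho^{g+1}=-1$.
\item Row (3): $\rho^{N-2}=\rho^{2g}=1$ and $\beta^N=\rho^{-1}$, so $\lambda^2=\rho^{g}=\pm1$. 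This gives the $\mmu{g}$ part of case (1) and the $\rho^g=-1$ part of case (2).
\item Row (2): $\rho\in\mmu{2g+1}\smallsetminus\{1\}$ and $\lambda^2=\rho^g$. Exchanging the roles of $\alpha$ and $\beta$ gives the alternative $\lambda^2=\rho^{g+1}$ after $\rho\mapsto\rho^{-1}$. This produces case (3), with $\lambda^4\ne1$ because $\rho^{2g}\neq1$ there.
\end{enumerate}
The identity $\sigma+1=(1+\lambda^4)^2\lambda^{-4}$ in case (3) I will check directly: $\lambda^4$ is $\rho^{2g}=\rho^{-1}$ or $\rho^{2g+2}=\rho$, so in either case $\sigma+1=\rho+2+\rho^{-1}=(1+\lambda^4)^2\lambda^{-4}$.

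Next I count. In case (1), $\mmu g\cup\mmu{g+1}$ has $2g$ elements, since $\gcd(g,g+1)=1$. Modulo inversion this leaves $g+1$ classes, because $\pm1$ are self-inverse and $g$ is odd, so $-1\in\mmu{g+1}$. With two values of $\lambda$ this gives $2(g+1)$ points. Case (2) is similar: the solutions of $\rho^g=-1$ and $\rho^{g+1}=-1$ form disjoint sets of total size $2g+1$. Here $\rho=-1$ is the only self-inverse element, arising from $\rho^g=-1$. Pairing up the rest gives $g+1$ classes, and two values of $\lambda$ give $2(g+1)$ points. In case (3) there are $g$ classes $\{\rho,\rho^{-1}\}$, each with four values of $\lambda$, for $4g$ points in all. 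The cases are separated by their values of $\lambda^4$. The total is $8g+4$, matching \Cref{prop:characterranks}. This confirms that distinct listed data give distinct points, together with the orbit argument that the images are exactly these sets.

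For the elimination ideal, the set of $\lambda$-values is $\{\pm1,\pm i\}$ together with all $\lambda$ satisfying $\lambda^2\in\mmu{2g+1}\smallsetminus\{1\}$. Every element of $\mmu{2g+1}\smallsetminus\{1\}$ arises as $\rho^g$ or $\rho^{g+1}$, since these powers permute $\mmu{2g+1}$. Hence the set of $\lambda$-values is exactly the root set of the squarefree polynomial $(\lambda^2+1)(\lambda^{4g+2}-1)$. By reducedness of $K_0(\sH_g)\otimes\bQ$, a Laurent polynomial in $\lambda$ vanishes in this ring if and only if it vanishes at these values, so the elimination ideal is generated by this polynomial. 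Torsion-freeness of $K_0(\sH_g)$ then gives membership of the polynomial in the integral ideal.

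I expect the main obstacle to be the bookkeeping of orderings and inversions in row (2), which I will handle using the swap $\rho\mapsto\rho^{-1}$ exactly as in the even case.
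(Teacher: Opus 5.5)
Your proposal is correct and follows essentially the same route as the paper: order the pairs as in \eqref{eq:restriction-conditions}, compute $\lambda^2=\rho^{g+1}\beta^N$ row by row, count classes modulo $\rho\mapsto\rho^{-1}$, and obtain the elimination ideal from reducedness and torsion-freeness. The differences are minor. You use the known total $8g+4$ to force every listed point to occur, where the paper notes directly that varying $\beta$ gives both signs of $\lambda$. Also, cases (1) and (2) are distinguished by $\lambda$ (or $\lambda^2$), not by $\lambda^4$, since $\lambda^4=1$ in both.
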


\begin{proof}
With $\rho=\alpha/\beta$, the three conditions on $\{\alpha,\beta\}$ take the form \Cref{eq:restriction-conditions}, and
\[
\lambda^2=\rho^{g+1}\beta^N.
\]
When $\rho^N=\beta^N=1$, we have $\lambda^2=\rho^{g+1}\in\{\pm1\}$, whereas when $\rho^{N-2}=1$, $\rho\ne1$, and $\beta^N=\rho^{-1}$, we have $\lambda^2=\rho^g\in\{\pm1\}$. Together, these give the first two cases of the statement. When $\rho^{N-1}=1$, $\rho\ne1$, and $\beta^N=\rho^{-1}$, we have $\lambda^2=\rho^g$, with $\rho\in\mmu{2g+1}\smallsetminus\{1\}$. Replacing $\rho$ by $\rho^{-1}$ gives the alternative $\lambda^2=\rho^{g+1}$. For fixed $\rho$, the choices of $\beta$ give both signs of $\lambda$, so these are the points of the spectrum.

The union $\mmu g\cup\mmu{g+1}$ has $2g$ elements, of which only $1$ and $-1$ equal their own inverses. It therefore gives $g+1$ values of $\sigma$ for each of $\lambda=\pm1$. The sets defined by $\rho^g=-1$ and $\rho^{g+1}=-1$ are disjoint and have total size $2g+1$, with only $-1$ equal to its own inverse, giving $g+1$ values of $\sigma$ for each of $\lambda=\pm i$. In the third case, each of the $g$ unordered pairs $\{\rho,\rho^{-1}\}$ gives four distinct choices of $\lambda$, all with $\lambda^4\ne1$. At these points, $\lambda^4\in\{\rho,\rho^{-1}\}$, so
\[
\sigma+1=\rho+2+\rho^{-1}=(1+\lambda^4)^2\lambda^{-4}.
\]

Since raising to the $g$-th power permutes $\mmu{2g+1}$, the values of $\lambda$ are $\mmu{4g+2}\cup\{\pm i\}$. Since $K_0(\sH_g)\otimes\bQ$ is reduced, a Laurent polynomial in $\bQ[\lambda^{\pm1}]$ vanishes in this ring if and only if it vanishes at all these values, or equivalently is divisible by $(\lambda^2+1)(\lambda^{4g+2}-1)$. Torsion-freeness of $K_0(\sH_g)$ then gives $(\lambda^2+1)(\lambda^{4g+2}-1)\in(F_g^{\mathrm{odd}},G_g^{\mathrm{odd}})$ over $\bZ$.
\end{proof}

\subsection{Connection with the Picard group}

We can now relate our $K_0$ computation to known facts about the Picard group of $\sH_g$.

\begin{proposition}\label{prop:odddisc}
The least positive integer $n$ such that $\lambda^n=1$ in $K_0(\sH_g)$ is $4g+2$ for even $g$, with $\lambda=[\det\cV]$, and $8g+4$ for odd $g$, with $\lambda=[\cL]$.
\end{proposition}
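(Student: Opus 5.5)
The order of $\lambda$ in $K_0(\sH_g)$ can be read off from the spectral description already obtained. Since $K_0(\sH_g)$ is torsion-free and $K_0(\sH_g)\otimes\overline{\bQ}$ is reduced (\Cref{prop:characterranks}), evaluation at the points of $\Spec(K_0(\sH_g)\otimes\overline{\bQ})$ is injective by \Cref{lem:generic-evaluation}. Hence $\lambda^n=1$ holds in $K_0(\sH_g)$ if and only if $\lambda(x)^n=1$ for every such point $x$. So I would determine the set of values of $\lambda$ at these points and compute the least common multiple of their multiplicative orders.

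For even $g$, \Cref{prop:evenspectrum} shows that the values of $\lambda$ are exactly $\pm1$ together with $\mmu{2g+1}\smallsetminus\{1\}$. All three cases occur: case (2) gives $\lambda=-1$, and case (3) gives all of $\mmu{2g+1}\smallsetminus\{1\}$, because raising to the $g$-th power permutes $\mmu{2g+1}$. In particular a primitive $(2g+1)$-st root of unity occurs, so the least common multiple of the orders is $\operatorname{lcm}(2,2g+1)=4g+2$.

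For odd $g$, \Cref{prop:oddspectrum} shows that the values of $\lambda$ form the set $\mmu{4g+2}\cup\{\pm i\}$. A primitive $(4g+2)$-nd root of unity and $i$, of order $4$, both occur. Since $4g+2=2(2g+1)$ with $2g+1$ odd, the least common multiple is $\operatorname{lcm}(4g+2,4)=4(2g+1)=8g+4$.

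An equivalent route is through the elimination ideals in \Cref{prop:evenspectrum,prop:oddspectrum}. They are generated by $(\lambda^2-1)(1+\cdots+\lambda^{2g})$ and $(\lambda^2+1)(\lambda^{4g+2}-1)$, respectively. Then $\lambda^n-1$ lies in the ideal over $\bQ$ exactly when the generator divides $\lambda^n-1$. This means every root of the generator is an $n$-th root of unity, which gives the same least common multiple.

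The only delicate point is checking that the roots of unity of the maximal required order genuinely appear among the points. This is settled by the surjectivity of $\zeta\mapsto\zeta^g$ on $\mmu{2g+1}$, together with both signs of $\lambda$ occurring in case (3) of the odd-genus description. It is also consistent with $\Pic(\sH_g)$ being cyclic of order $4g+2$ or $8g+4$.
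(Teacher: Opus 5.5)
Your proof is correct. It reaches the upper bound by a different route from the paper.

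\textbf{Paper's route.} The paper gets $\lambda^{4g+2}=1$ (resp.\ $\lambda^{8g+4}=1$) directly from the discriminant relation $\psi^{N(N-1)}=1$ of \Cref{prop:discriminantclass}. It uses $N(N-1)=2(g+1)(2g+1)$ together with $\lambda=\psi^{g+1}$ or $\lambda=\psi^{(g+1)/2}$. It then uses the spectral descriptions in \Cref{prop:evenspectrum,prop:oddspectrum} only for minimality. That step needs only that evaluation at a point is a ring homomorphism, not that evaluation is injective.

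\textbf{Your route.} You get both bounds at once from injectivity of evaluation, which rests on torsion-freeness and reducedness of $K_0(\sH_g)\otimes\overline{\bQ}$. The exact order is then the lcm of the orders of the values of $\lambda$. This is the same mechanism the paper uses to show that the elimination-ideal generators lie in the integral ideal. Your elimination-ideal variant is therefore the same argument in different form.

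\textbf{Comparison.} Given the spectrum, your approach is more uniform and self-contained: one principle determines the order exactly. The paper's upper bound has a concrete geometric source: the discriminant is an equivariant section of $\det^{-N(N-1)}$ with zero divisor $\Delta$. That relation already holds in $K_0(\sX_g)$ and does not use reducedness, so it would remain valid where the evaluation argument is unavailable.
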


\begin{proof}
The identity $\psi^{N(N-1)}=1$ gives $\lambda^{4g+2}=1$ for even $g$, since $\lambda=\psi^{g+1}$, and $\lambda^{8g+4}=1$ for odd $g$, since $\lambda=\psi^{(g+1)/2}$.

For minimality, \Cref{prop:evenspectrum,prop:oddspectrum} show that the values of $\lambda$ include a primitive $(2g+1)$-th root of unity in both parities, as well as $-1$ for even $g$ and $i$ for odd $g$. Hence, any exponent $n$ with $\lambda^n=1$ is divisible by $\operatorname{lcm}(2,2g+1)=4g+2$ for even $g$ and by $\operatorname{lcm}(4,2g+1)=8g+4$ for odd $g$.
\end{proof}

\begin{remark}\label{rem:pic}
Taking determinants of the bundle classes in \Cref{thm:intromain} shows that $\Pic(\sH_g)$ is generated by $\det\cV$ for even $g$ and by $\cL$ for odd $g$, since $\det\cE\cong\cO$. Sending a line bundle to its $K_0$-class is injective, as taking the determinant recovers the line bundle. Thus, the order of $\lambda$ in \Cref{prop:odddisc} recovers the Picard group \cites[Section~5]{AV04}[Section~6]{DL21}:
\[
\Pic(\sH_g)\cong
\begin{cases}
\bZ/(4g+2),&g\text{ even},\\
\bZ/(8g+4),&g\text{ odd}.
\end{cases}
\]
\end{remark}

\begin{remark}\label{rem:basefield}
Although the binary-form presentation of $\sH_g$ holds in characteristic different from $2$ \cite[Example~3.5(a)]{AV04}, our presentation of $K_0(\sH_g)$ fails in characteristic $3$, already for $g=2$.

Over a field of characteristic $3$, the $\GL_2$-invariant linear subspace
\[
W=\Bbbk\cdot\{x_0^6,x_0^3x_1^3,x_1^6\}\subset\bA^7
\]
consists of cubes of quadratic forms, so $W\subset\Delta$. The dual of its normal representation has torus weights $\alpha \beta^5,\alpha^2\beta^4,\alpha^4\beta^2,\alpha^5\beta$, and the Koszul resolution gives the relation $\prod_{i\in\{1,2,4,5\}}(1-\alpha^i\beta^{6-i})=0$ on the discriminant complement. This class has $\mmu3$-weight zero and therefore gives a relation on $\sH_2$. In the generators $\nu=(\alpha+\beta)\alpha \beta$ and $\lambda=(\alpha \beta)^3$, it is
\begin{equation}\label{eq:charthreeextra}
\bigl(1-\nu^2+2\lambda+\lambda^2\bigr)
\bigl(1-\lambda^{-1}\nu^4+4\nu^2-2\lambda+\lambda^2\bigr)=0.
\end{equation}
Both defining relations in \Cref{thm:even} vanish at $(\nu,\lambda)=(-1,1)$, whereas the left-hand side of \Cref{eq:charthreeextra} evaluates to $9$ in $\bQ$. Thus, this additional relation is nonzero in the characteristic-zero presentation, which therefore fails in characteristic $3$.
\end{remark}

\section{Inertia, Riemann--Roch, and {$\operatorname{Spec}$} of $K_0$ and Chow Rings}
\label{sec:inertia}

Throughout this section, we work over an algebraically closed field of characteristic zero. By Riemann--Roch, the rank of $K_0(\sH_g)$ is the sum of the dimensions of the rational Chow rings of the inertia components (\Cref{thm:introlengths}). Edidin--Hu used this comparison to determine the rank of $K_0(\overline{\sM}_2)$ \cite[Section~6.1]{EH24}. For $\sH_g$, Pagani showed that $CH^\bullet(\sZ)\otimes\bQ\cong\bQ$ for every inertia component $\sZ\subset\sI\sH_g$ \cite[Theorem~4.2 and Remark~5.2]{Pag12}, so $\operatorname{rank}_{\bZ}K_0(\sH_g)=\#\pi_0(\sI\sH_g)$.

We also compare $K_0(\sH_g)$ with $CH^\bullet(\sH_g)$ through the gamma-filtration. For every prime $p$, we construct a surjection from $CH^\bullet(\sH_g)\otimes\bF_p$ to $\operatorname{gr}_F(K_0(\sH_g)\otimes\bF_p)$ and give a presentation of the associated graded ring, where $F$ is the image of the gamma-filtration.

\subsection{The rank of \texorpdfstring{$K_0$}{K0} and the inertia stack}\label{subsec:lengths}

\begin{proof}[Proof of \Cref{thm:introlengths}]
By \Cref{thm:inertiachern}, $K_0(\mathscr X)\otimes\overline{\bQ}$ is the product of the rings $CH^\bullet(\mathscr Z)\otimes\overline{\bQ}$ over the connected components $\mathscr Z$ of $\sI\mathscr X$. Each ring $CH^\bullet(\mathscr Z)\otimes\overline{\bQ}$ is finite-dimensional, with unique maximal ideal consisting of the positive-codimension classes and residue field $\overline{\bQ}$. Summing the lengths of these factors gives
\[
r=\sum_{\mathscr Z\in\pi_0(\sI\mathscr X)}\dim_{\bQ}(CH^\bullet(\mathscr Z)\otimes\bQ).
\]
Hence $r=\#\pi_0(\sI\mathscr X)$ precisely when $CH^\bullet(\mathscr Z)\otimes\overline{\bQ}\cong\overline{\bQ}$ for every $\mathscr Z$, or equivalently when $K_0(\mathscr X)\otimes\bQ$ is reduced. Applying this to the component $\mathscr X\subset\sI\mathscr X$ consisting of identity automorphisms gives $CH^\bullet(\mathscr X)\otimes\bQ\cong\bQ$. Finally, freeness of $K_0(\mathscr X)$ over $\bZ$ gives $\dim_{\bar{\Bbbk}}(K_0(\mathscr X)\otimes\bar{\Bbbk})=r$ for every algebraically closed coefficient field $\bar{\Bbbk}$, so every geometric fiber of $\Spec K_0(\mathscr X)\to\Spec\bZ$ has length $r$.
\end{proof}

\subsection{\texorpdfstring{On $\operatorname{Spec} K_0(\sH_g)$}{On Spec K0(Hg)}}
\label{subsec:spectra}

The inertia stack $\sI\sH_g$ parametrizes pairs $(C,\tau)$, where $C$ is a smooth hyperelliptic curve of genus $g$ and $\tau\in\operatorname{Aut}(C)$. Pagani's description of its connected components gives a geometric interpretation of the rank of $K_0(\sH_g)$.

\begin{proposition}\label{prop:sectorpoint}
Let the base field be algebraically closed of characteristic zero.  Every component $\sZ$ of $\sI\sH_g$ has $CH^\bullet(\sZ)\otimes\bQ\cong \bQ$, concentrated in degree zero.  Moreover,
\[
\dim_\bQ\bigl(K_0(\sH_g)\otimes\bQ\bigr)
=\#\pi_0(\sI\sH_g)
=\begin{cases}8g+2,&g\text{ even},\\8g+4,&g\text{ odd}.\end{cases}
\]
\end{proposition}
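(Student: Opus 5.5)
The plan is to combine \Cref{thm:introlengths} with the computations already done, rather than reprove Pagani's classification from scratch. First I check that $\sH_g$ satisfies the hypotheses of \Cref{thm:introlengths}. It is a smooth connected separated Deligne--Mumford stack of finite type. It is a quotient of a smooth affine variety by $\GL_2/\mmu{g+1}$ via \eqref{eq:Hg-quotient-presentation}, so it has the resolution property. Its coarse space is affine, being a GIT quotient of the affine variety $\bA^{N+1}\smallsetminus\Delta$ by a reductive group. Finally, $K_0(\sH_g)$ is free abelian of finite rank $r_g$ by \Cref{prop:characterranks}, with $r_g=8g+2$ for even $g$ and $8g+4$ for odd $g$. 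These ranks give the last equality in the display.

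Next, \Cref{prop:characterranks} also shows that $K_0(\sH_g)\otimes\bQ$ is reduced, since $(A_g)_0\otimes\overline{\bQ}$ sits inside the reduced ring $A_g\otimes\overline{\bQ}$. Extending from a characteristic-zero base to its algebraic closure does not change $K_0(\sH_g)$. The presentation in \Cref{thm:intromain} is valid over any field of characteristic zero, so I may work over the algebraically closed base. The equality clause of \Cref{thm:introlengths} then gives $r_g=\#\pi_0(\sI\sH_g)$. It also gives $CH^\bullet(\sZ)\otimes\bQ\cong\bQ$ for every inertia component $\sZ$. Since $\sZ$ is connected, $CH^0(\sZ)\otimes\bQ\cong\bQ$, so this copy of $\bQ$ is concentrated in degree zero and all positive-degree rational Chow groups vanish. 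The equality $\dim_\bQ(K_0(\sH_g)\otimes\bQ)=r_g$ is immediate from freeness.

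As a consistency check, independent of the argument above, I would match the count with Pagani's classification \cite[Theorem~4.2]{Pag12}. Through the Riemann--Roch isomorphism, the points of $\Spec(K_0(\sH_g)\otimes\overline{\bQ})$ listed in \Cref{prop:evenspectrum,prop:oddspectrum} correspond to the eigenvalues of $\tau$ on $\cV$, respectively on $\cE$ and $\cL$. Under this correspondence the three families match Pagani's three types of automorphisms. The first type consists of lifts of rotations of $\bP^1$ of order dividing $g$ or $g+1$ compatible with the branch divisor, together with the hyperelliptic involution. The remaining two types are automorphisms acting on the branch points with one fixed branch point or two fixed branch points.

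The main obstacle I expect is verifying the hypotheses cleanly, especially that the coarse space is quasi-projective. I would argue via the GIT quotient: every point of $\bA^{N+1}\smallsetminus\Delta$ is stable for $\SL_2$ by the classical stability of binary forms with distinct roots. If that route is awkward, I would fall back on Pagani's explicit statement that each inertia component is a quotient of an open subset of an affine space by a finite group, which directly gives $CH^\bullet(\sZ)\otimes\bQ\cong\bQ$. With that, the rank equality follows from \Cref{thm:introlengths} alone, without needing the reducedness of $K_0(\sH_g)\otimes\bQ$.
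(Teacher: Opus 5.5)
Your argument is correct, but it runs in the opposite direction from the paper's.

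\textbf{The paper's route.} It takes Pagani's work as input. His description of the coarse spaces of the inertia components as finite quotients of $M_{0,n}$ gives $CH^\bullet(\sZ)\otimes\bQ\cong\bQ$, and his classification gives the number of components. \Cref{thm:introlengths}, which here amounts to the Riemann--Roch isomorphism, then turns this into $\dim_\bQ(K_0(\sH_g)\otimes\bQ)=\#\pi_0(\sI\sH_g)$.

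\textbf{Your route.} You feed in the output of \Cref{prop:characterranks}, namely freeness of rank $r_g$ and reducedness of $K_0(\sH_g)\otimes\bQ$. The equality clause of \Cref{thm:introlengths} then yields both of Pagani's facts. This is not circular, since neither \Cref{prop:characterranks} nor \Cref{thm:introlengths} uses Pagani. Your check of the hypotheses (resolution property from the quotient presentation, affine coarse space via GIT with all points stable) is fine.

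\textbf{Comparison.} Your route makes the component count and the triviality of the rational Chow rings of the sectors corollaries of the $K_0$ computation, with no appeal to the classification. The paper's route computes the rank from the inertia side alone. Its agreement with \Cref{prop:characterranks} is therefore a genuine cross-check, which is how the introduction uses this proposition (as evidence that the presentations are correct). On your route, an error in \Cref{sec:equivariant} would instead propagate silently into the count. Your fallback in the last paragraph is essentially the paper's proof. It still applies \Cref{thm:introlengths}, so it does not avoid the hypothesis check you were worried about.

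\textbf{A correction to your consistency-check paragraph.} This paragraph is not needed for the proof, but it mismatches the families. In \Cref{prop:evenspectrum,prop:oddspectrum}, the first two families are distinguished by the value of $\lambda$, not by Pagani's types. Each of them mixes rotations with no fixed branch point ($d\mid 2g+2$) and rotations whose two fixed points are both branch points ($d\mid 2g$). Only the third family is a single Pagani type: one fixed branch point, $d\mid 2g+1$. For $g=3$, for instance, $(\sigma,\lambda)=(0,1)$ comes from $x_0x_1u(x_0^3,x_1^3)$, while $(1\pm\sqrt2,\pm i)$ comes from $u(x_0^8,x_1^8)$.
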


\begin{proof}
Pagani obtained $CH^\bullet(\sZ)\otimes\bQ\cong\bQ$ by describing the coarse moduli spaces of the components as finite quotients of $M_{0,n}$ \cite[Theorem~4.2 and Remark~5.2]{Pag12}. Since $M_{0,n}\subset\bA^{n-3}$ is open, the same conclusion holds over our base field \cite[Properties~7.1.36(2)--(5)]{Alp26}. Counting the components in his classification and applying \Cref{thm:introlengths} gives the two equalities.
\end{proof}

By \eqref{eq:inertiachern}, we have
\[
K_0(\sH_g)\otimes\overline{\bQ}\cong\prod_{\sZ\in\pi_0(\sI\sH_g)}\overline{\bQ},
\]
so each connected component of $\sI\sH_g$ corresponds to a point of $\Spec(K_0(\sH_g)\otimes\overline{\bQ})$. The identity and the hyperelliptic involution $\iota$ give two components isomorphic to $\sH_g$, and projection to the component of identity automorphisms is the rank homomorphism.

For a pair $(C,\tau)$ on any other component, the induced automorphism of $C/\langle\iota\rangle\cong\bP^1$ is nontrivial. Choose coordinates so that this automorphism is $x\mapsto\rho x$, where $\rho$ has order $d\ge2$. The branch points away from $(0:1)$ and $(1:0)$ occur in orbits of size $d$, so the binary form defining $C$ has the form \cite[proof of Theorem~4.2]{Pag12}
\begin{equation}\label{eq:fixedbinaryform}
f=x_0^{n_1}x_1^{n_2}u(x_0^d,x_1^d),
\qquad
N=md+n_1+n_2.
\end{equation}
Here $n_1,n_2\in\{0,1\}$ are the orders of vanishing of $f$ at $(0:1)$ and $(1:0)$, respectively, while $u$ is a square-free binary form of degree $m\ge1$ with $u(1,0)u(0,1)\ne0$.

After interchanging $x_0,x_1$ if necessary, the possibilities are $(n_1,n_2)=(0,0),(1,0),(1,1)$. On $y^2=f(x,1)$, the action $x\mapsto\rho x$ determines $\tau$ up to composition with $\iota$, which changes the sign of $y$. With $\zeta$ a primitive $2d$-th root of unity satisfying $\zeta^2=\rho$, Pagani's formulas give
\begin{equation}\label{eq:binarystabilizers}
\begin{array}{c|c|c|c}
f & \text{condition} & \tau(x,y) & \langle\iota,\tau\rangle\\[2pt]
\hline
u(x_0^d,x_1^d) & d\mid 2g+2 & (\rho x,\pm y)
& \mmu d\times\mmu2\\[2pt]
x_0u(x_0^d,x_1^d) & d\mid 2g+1 & (\rho x,\pm\zeta y)
& \mmu{2d}\\[2pt]
x_0x_1u(x_0^d,x_1^d) & d\mid 2g & (\rho x,\pm\zeta y)
& \mmu{2d}
\end{array}
\end{equation}
The subgroup $\langle\iota,\tau\rangle\subset\operatorname{Stab}_{\GL_2/\mmu{g+1}}(f)$ consists of the automorphisms inducing powers of $x\mapsto\rho x$ on $\bP^1$.

The difference between the two ranks in \Cref{prop:sectorpoint} comes from the components with $d=2$. Pagani's classification gives two such components when $g$ is even and four when $g$ is odd, with $8g$ other components in either parity \cite[Theorem~4.2]{Pag12}.

\subsection{The Chow ring and the gamma-filtration}
\label{subsec:chernpresentations}\label{subsec:lowest}\label{subsec:reduction}

Although every fiber of $\Spec K_0(\sH_g)\to\Spec\bZ$ has finite length, the corresponding fibers for $CH^\bullet(\sH_g)$ can have positive dimension. We first describe these fibers, then compute the quotients of the Chow rings given by the gamma-filtration. For the Chow rings, we use the presentations \eqref{eq:chow} with $c_i=c_i(\cV)$ for even $g$, and $t=c_1(\cL)$ and $c_i=c_i(\cE)$ for odd $g$.

\begin{proposition}\label{prop:chowfibers}
For $\Bbbk=\bQ$ or $\Bbbk=\bF_p$ with $p$ prime, the fiber of $\Spec CH^\bullet(\sH_g)\to\Spec\bZ$ is
\[
\begin{array}{c|c|c}
\text{coefficient field }\Bbbk & g\text{ even} & g\text{ odd}\\ \hline
\bF_2 & \bA^2 & \bA^3\\
\bF_p,\ p\text{ odd},\ p\mid 2g+1
& \{g(g-1)c_1^2=4g(g+1)c_2\}
& \{8t^2=2g(g+1)c_2,\ c_3=0\}\\
\bF_p,\ p\text{ odd},\ p\mid g(g+1)
& \{c_1=0\}\cong\bA^1
& \{t=c_3=0\}\cong\bA^1\\
\bQ\text{ or any other }\bF_p
& \Spec \Bbbk & \Spec \Bbbk
\end{array}
\]
where each scheme is over the field in its row. In the second row, the fiber for even $g$ is a parabola for $p\neq3$ and the line $c_2=0$ for $p=3$.  The fiber for odd $g$ is a parabola. In particular, a fiber has positive dimension exactly at the primes dividing $2g(g+1)(2g+1)$.
\end{proposition}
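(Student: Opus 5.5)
The plan is a direct computation from the presentations \eqref{eq:chow}. Tensoring with $\Bbbk$ and taking $\Spec$ gives the fiber as the closed subscheme of $\bA^2_\Bbbk$ (coordinates $c_1,c_2$) or $\bA^3_\Bbbk$ (coordinates $t,c_2,c_3$) cut out by the reductions of the generators. Since each generator reduces to a polynomial with coefficients in $\Bbbk$, we simply read off which integer coefficients vanish modulo $p$. We record the relevant integers: for even $g$ they are $2(2g+1)$, $g(g-1)$ and $4g(g+1)$; for odd $g$ they are $4(2g+1)$, $8$, $2g(g+1)$ and $2$.

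The case analysis proceeds as follows.

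\emph{Case $p=2$.} Every coefficient above is even: $g(g-1)$ is always even, and the rest carry visible factors of $2$. Hence all relations vanish and the fiber is $\bA^2$, respectively $\bA^3$.

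\emph{Case $\Bbbk=\bQ$, or $p$ odd with $p\nmid g(g+1)(2g+1)$.} The first relation makes $c_1$ (resp. $t$) zero, since $2(2g+1)$, resp. $4(2g+1)$, is invertible. In the odd case, $c_3=0$ because $2$ is invertible. The second relation then reads $4g(g+1)c_2=0$, resp. $2g(g+1)c_2=0$, with invertible coefficient, so $c_2=0$. The ring is therefore $\Bbbk$ itself, and the fiber is the reduced point $\Spec\Bbbk$. Here we must remember that the fiber is taken as a scheme, so nilpotents must be ruled out. Because the relations force $c_1$ and $c_2$ to vanish, and not merely some power of them, the fiber is genuinely $\Spec\Bbbk$.

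\emph{Case $p$ odd with $p\mid g(g+1)$.} Then $p\nmid 2g+1$, since $2g+1$ is coprime to both $g$ and $g+1$. So $c_1=0$, resp. $t=0$, and $c_3=0$ as before. The second relation becomes $g(g-1)c_1^2$, resp. $8t^2$, which is already killed by $c_1=0$, resp. $t=0$. The fiber is the line with coordinate $c_2$, which is $\cong\bA^1$.

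\emph{Case $p$ odd with $p\mid 2g+1$.} Then $p\nmid g(g+1)$, and the first relation vanishes. For odd $g$ we still get $c_3=0$, so the fiber is $\{8t^2=2g(g+1)c_2\}$ with $2g(g+1)$ a unit. Solving for $c_2$ gives a parabola. For even $g$ the fiber is $\{g(g-1)c_1^2=4g(g+1)c_2\}$, a parabola in $(c_1,c_2)$ exactly when $g(g-1)\not\equiv0$ mod $p$. Since $p\nmid g$, this reduces to the question of whether $p\mid g-1$. From $2g+1\equiv0$ and $g\equiv1$ we get $3\equiv0$, so $p\mid g-1$ happens exactly when $p=3$. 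In that case the equation is $c_2=0$, a line.

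The final sentence of the statement follows by collecting the rows: positive dimension occurs exactly for $p=2$ and for odd $p$ dividing $g(g+1)(2g+1)$, that is, for $p\mid 2g(g+1)(2g+1)$.

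I expect no real obstacle. The only delicate points are the coprimality facts, which guarantee that the second and third rows are disjoint and that the reduced-point case is stated correctly, together with the $p=3$ degeneration in the even case.
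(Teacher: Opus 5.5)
Your proposal is correct and follows the paper's proof: both read the fibers off from \eqref{eq:chow} after extension of scalars, and the only substantive step is the row $p\mid 2g+1$, where the paper uses the congruences $4g(g+1)\equiv-1$ and $g(g-1)\equiv 3/4 \pmod p$ in place of your coprimality argument. For the converse in the $p=3$ degeneration, the identity $2g+1=2(g-1)+3$ gives $3\mid g-1$ whenever $3\mid 2g+1$; your phrase ``exactly when'' leaves this direction implicit.
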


\begin{proof}
The table follows from \eqref{eq:chow} by extension of scalars to $\Bbbk$. For odd $p\mid2g+1$, the congruences $4g(g+1)\equiv-1$ and $g(g-1)\equiv3/4\pmod p$ show that the coefficient of $c_2$ is nonzero, whereas that of $c_1^2$ vanishes only for $p=3$.
\end{proof}

To determine the gamma-filtration, we use $c_1^K(\cV)=\delta_1+\delta_2$ and $c_2^K(\cV)=\delta_2$ from \eqref{eq:kchern-ranktwo} in even genus, and the following formulas in odd genus.

\begin{lemma}\label{lem:cEchern}Let $g$ be odd.  The bundle $\cE$ is self-dual with trivial determinant, and its $K$-theoretic Chern classes are
\[
c_1^K(\cE) \;=\; c_2^K(\cE) \;=\; 3 - \sigma,
\qquad
c_3^K(\cE) \;=\; 0 .
\]
\end{lemma}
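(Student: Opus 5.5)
We reduce everything to the representation ring via the description of $\cE$ as a $\PGL_2$-bundle. By the discussion preceding \Cref{prop:oddlattice}, the pullback of $\cE$ to $\sX_g$ is $\Sym^2\cU\otimes(\det\cU)^{-1}$, which comes from the three-dimensional irreducible representation of $\PGL_2$. Under the isomorphism $\GL_2/\mmu{g+1}\cong\bG_m\times\PGL_2$, $\cE$ is therefore induced from a representation $W$ of $\PGL_2$ (trivial on the $\bG_m$ factor). Hence both statements about self-duality and the determinant can be checked on $W$, and then transported to $\sH_g$.

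For self-duality, $W$ is the adjoint representation of $\PGL_2$ on $\mathfrak{sl}_2$. The trace form $(X,Y)\mapsto\operatorname{tr}(XY)$ is $\PGL_2$-invariant and nondegenerate in characteristic zero, so $W\cong W^\vee$. For the determinant, $\det W$ is a character of $\PGL_2$, and $\PGL_2$ has no nontrivial characters. One can also check this directly on the torus: the weights are $\alpha/\beta,1,\beta/\alpha$, whose product is $1$. Thus $\det\cE\cong\cO$ and $[\cE^\vee]=[\cE]=\sigma$.

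For the Chern classes, we apply \Cref{lem:kchern-exterior} with $r=3$, using the exterior powers of $\cE^\vee\cong\cE$. We have $[\wedge^0]=1$, $[\wedge^1]=\sigma$, $[\wedge^3]=[\det\cE]=1$, and $\wedge^2\cE\cong\cE^\vee\otimes\det\cE\cong\cE$, so $[\wedge^2]=\sigma$. The coefficient formulas then give
\[
c_1^K(\cE)=3-\sigma,\qquad c_2^K(\cE)=3-2\sigma+\sigma=3-\sigma,\qquad c_3^K(\cE)=\lambda_{-1}([\cE^\vee])=1-\sigma+\sigma-1=0.
\]

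As a check, in Chern roots the $K$-theoretic Chern roots are $1-\rho^{-1}$, $0$, $1-\rho$, where $\rho=\alpha/\beta$. Because one root is $0$, the top class $c_3^K$ vanishes, and $c_1^K$ and $c_2^K$ agree since both equal $2-\rho-\rho^{-1}=3-\sigma$. The expected main obstacle is only justifying that these representation-level identities descend to $\sH_g$. This follows because the relevant isomorphisms are $\GL_2$-equivariant with trivial $\mmu{g+1}$-action, so \Cref{thm:isotypic} transports them to $K_0(\sH_g)$. Alternatively, the equalities hold in $R(\GL_2)$ and hence in its image $K_0(\sX_g)_0\cong K_0(\sH_g)$.
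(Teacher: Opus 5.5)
Your proposal is correct and follows essentially the same route as the paper: $\cE$ descends from the self-dual, trivial-determinant representation $\Sym^2\cU\otimes\det\cU^\vee$, so $\wedge^2\cE^\vee\cong\cE$, and the classes $c_1^K(\cE)=c_2^K(\cE)=3-\sigma$ and $c_3^K(\cE)=0$ are read off from the exterior-power formula of \Cref{lem:kchern-exterior}. Your Chern-root check with roots $1-\rho^{-1},\,0,\,1-\rho$, and your remark that the identities already hold in the representation ring and map to $K_0(\sH_g)$ under the $\lambda$-ring homomorphism, are valid extra justifications.
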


\begin{proof}
The bundle $\cE$ descends from $\Sym^2\cU\otimes\det\cU^\vee$, which is self-dual with trivial determinant. Thus $\wedge^2\cE^\vee\cong\cE$, and the exterior-power formula gives
\[
c_t^K(\cE)
=(1+t)^3-\sigma t(1+t)^2+\sigma t^2(1+t)-t^3
=1+(3-\sigma)t+(3-\sigma)t^2.
\qedhere
\]
\end{proof}

We obtain the gamma-filtration on $K_0(\sH_g)$ from the filtrations on $R(\GL_2)$ for even $g$ and $R(\bG_m\times\PGL_2)$ for odd $g$.

\begin{proposition}\label{prop:gamma}
For every $g\ge2$ and every $n\ge0$, we have
\[
F_\gamma^nK_0(\sH_g)=
\begin{cases}
(\delta_1^i\delta_2^j: i,j\ge0,\ i+2j\ge n),
&g\text{ even},\\
(\tau_1^i\kappa_2^j: i,j\ge0,\ i+2j\ge n),
&g\text{ odd}.
\end{cases}
\]
\end{proposition}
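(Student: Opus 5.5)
The plan is to compute the gamma-filtration on the representation ring of the structure group and push it forward along the surjection to $K_0(\sH_g)$.

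\emph{Step 1: surjectivity.} For even $g$ the structure group is $\GL_2$, and for odd $g$ it is $\bG_m\times\PGL_2$. The map $R(G)\twoheadrightarrow K_0(\sH_g)$ is surjective by \Cref{prop:eqsurj} and \Cref{thm:isotypic}, since $K_0(\sH_g)\cong(A_g)_0$ is a quotient of $R(\GL_2/\mmu{g+1})$. It is also a surjective map of $\lambda$-rings. The gamma-filtration is functorial, so it maps into the gamma-filtration. Surjectivity upgrades this to equality: $F_\gamma^1$ is the kernel of the rank map on both sides, and $F^n_\gamma$ is generated by $\gamma$-products of elements of $F^1_\gamma$, all of which lift. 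Hence it suffices to prove the claimed description of $F^n_\gamma$ in $R(G)$ and take images.

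\emph{Step 2: even $g$, $G=\GL_2$.} Here $R(\GL_2)=\bZ[\nu,\lambda^{\pm1}]$. Using \eqref{eq:kchern-ranktwo}, I rewrite this as a ring generated by $\delta_1=c_1^K(\det\cV)$ and $\delta_2=c_2^K(\cV)$, with $\lambda^{-1}=1-\delta_1$ and $\lambda$ expressed through the inverse of $1-\delta_1$ in the completion.

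To avoid the completion, I argue directly with the ideal $I_n=(\delta_1^i\delta_2^j : i+2j\ge n)$.
\begin{itemize}
\item[$\supseteq$] Since $c_j^K\in F^j_\gamma$ by \eqref{eq:kchern-gamma} and the filtration is multiplicative, $I_n\subseteq F^n_\gamma$.
\item[$\subseteq$] I show that $I_\bullet$ is a filtration with $I_1=\ker\operatorname{rk}$, and that $\gamma^k(z)\in I_k$ for every $z\in I_1$. By the splitting principle on the torus, $\gamma$-operations of line-bundle classes $x-1$ satisfy $\gamma^1(x-1)=x-1$ and $\gamma^k(x-1)=0$ for $k\ge2$. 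Then $\gamma_t$ is a multiplicative function of $z$, and a general element of $I_1$ is a polynomial combination of $\delta$'s. For $\gamma^k$ of a sum or product, I use the standard fact that the filtration generated by the Chern classes of generators of a $\lambda$-ring is stable under $\gamma$. Concretely: $R(T)$ has gamma-filtration equal to the powers of the augmentation ideal, generated by $1-\alpha^{-1}$ and $1-\beta^{-1}$. I intersect with the $\mathfrak S_2$-invariants and check that $I_n=R(\GL_2)\cap J^n$, where $J=(1-\alpha^{-1},1-\beta^{-1})$ and the elementary symmetric functions of the $K$-Chern roots are $\delta_1+\delta_2$ and $\delta_2$.
\end{itemize}
This last identity is the main obstacle. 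Symmetric polynomials in the roots $b_1,b_2$ of degree $\ge n$ are polynomials in $e_1,e_2$ of weighted degree $\ge n$. The difficulty is that the ring is a Laurent localization rather than a polynomial ring. I would handle this by noting that the units $\alpha^{-1}=1-b_1$ only add higher-degree terms, so comparing lowest-degree forms settles it.

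\emph{Step 3: odd $g$.} Here $R(\bG_m\times\PGL_2)=\bZ[\lambda^{\pm1}]\otimes R(\PGL_2)$ with $R(\PGL_2)=\bZ[\sigma]$. The gamma-filtration of a tensor product of such rings is the product filtration. On $\bZ[\lambda^{\pm1}]$ it is $(\tau_1^i)$. On $R(\PGL_2)\subset R(T')$, with $T'$ one-dimensional, I compute via $\sigma=\rho+1+\rho^{-1}$. Writing $b=1-\rho^{-1}$, the class $\kappa_2=3-\sigma$ lies in $J^2$ (it equals $-(\rho-1)^2/\rho$), and the invariants of $J^n$ are generated by $\kappa_2^{\lceil n/2\rceil}$. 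By \Cref{lem:cEchern}, $c_1^K(\cE)=\kappa_2$ as well, so $\kappa_2$ lies in $F^2_\gamma$ rather than just $F^1_\gamma$. Combining the two factors gives $(\tau_1^i\kappa_2^j:i+2j\ge n)$, and the image under Step 1 finishes the proof.
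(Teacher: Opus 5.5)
Your proposal is correct. It obtains the filtration on the representation ring by a different route from the paper; your Step 1 (surjectivity) coincides with the paper's final sentence.

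The paper computes nothing on the torus. For even $g$ it quotes Karpenko--Merkurjev's description of $F_\gamma^nR(\GL_2)$ by weighted monomials in $c_1^K(\cV),c_2^K(\cV)$. For odd $g$ it transports the known filtration on $R(\mathrm{SL}_2)$ to $R(\PGL_2)$ through the augmented $\lambda$-ring isomorphism sending $\sigma-1$ to the standard representation, then combines this with $F_\gamma^nR(\bG_m)=(\tau_1^n)$.

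You instead prove $F_\gamma^nR(G)=R(G)\cap F_\gamma^nR(T)$ directly. The inclusion $I_n\subseteq F_\gamma^n$ comes from $c_j^K\in F_\gamma^j$. The reverse uses $F_\gamma^nR(T)=J^n$, which holds because $\ker\operatorname{rk}$ on $R(T)$ is spanned by the $x-1$ for characters $x$ and $\gamma_t(x-1)=1+(x-1)t$, together with a computation of $R(G)\cap J^n$. That computation goes through, and it is worth stating precisely rather than as a remark about lowest-degree terms. Put $e_1=b_1+b_2$, $e_2=b_1b_2$ and $w=1-e_1+e_2=\lambda^{-1}$. Then $R(T)=\bZ[b_1,b_2][w^{-1}]$ and $R(\GL_2)=\bZ[e_1,e_2][w^{-1}]$. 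Since $w$ acts invertibly on $\bZ[b_1,b_2]/(b_1,b_2)^n$, an element $hw^{-m}$ with $h\in\bZ[e_1,e_2]$ lies in $J^n$ exactly when every monomial of $h$ has degree at least $n$, i.e.\ when $h\in(e_1^ie_2^j:i+2j\ge n)$.

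Your route is self-contained and shows in addition that the filtration is detected on the maximal torus. The paper's route is shorter but rests on the cited general results and on the $\PGL_2$--$\mathrm{SL}_2$ comparison.

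Three points need tightening.
\begin{enumerate}
\item Drop the appeal to a ``standard fact'' that the filtration generated by Chern classes of generators is $\gamma$-stable. The torus argument alone already gives the inclusion $\subseteq$.
\item In Step 3, the claim that the filtration on $R(\bG_m)\otimes R(\PGL_2)$ is the product filtration is formal only in the direction $\supseteq$; the paper also passes over this. The reverse inclusion follows by your method in $R(\bG_m\times T')=\bZ[\lambda^{\pm1},\rho^{\pm1}]$. With $b=1-\rho^{-1}$ one has $\kappa_2=-b^2(1-b)^{-1}$. Take $f=\sum_jF_j\kappa_2^j$ with $F_j\in\bZ[\lambda^{\pm1}]$, and let $c_j\tau_1^{a_j}$ be the leading term of $F_j$. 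The lowest-degree form of $f$ in $\bZ[[\tau_1,b]]$ is then $\sum_j(-1)^jc_j\tau_1^{a_j}b^{2j}$ over those $j$ minimizing $a_j+2j$. These monomials are distinct, so $f\in(\tau_1,b)^n$ forces $F_j\in\tau_1^{\max(n-2j,0)}\bZ[\lambda^{\pm1}]$. Membership in powers of $(\tau_1,b)$ may be tested in the completion because the ring is Noetherian.
\item $\kappa_2\in F_\gamma^2$ simply because $\kappa_2=c_2^K(\cE)$; that it also equals $c_1^K(\cE)$ is irrelevant.
\end{enumerate}
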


\begin{proof}
For even $g$, the ideal $F_\gamma^nR(\GL_2)$ is generated by monomials in $c_1^K(\cV),c_2^K(\cV)$ of weighted degree at least $n$, with weights $1,2$ \cite[Proposition~4.14, Example~5.3, and Appendix~G]{KM21}. Since $c_1^K(\cV)=\delta_1+\delta_2$ and $c_2^K(\cV)=\delta_2$, this ideal is also $(\delta_1^i\delta_2^j:i,j\ge0,\ i+2j\ge n)$.

For odd $g$, we use the isomorphism of augmented $\lambda$-rings $R(\PGL_2)\cong R(\mathrm{SL}_2)$ that sends $\sigma-1$ to the class of the standard $\mathrm{SL}_2$-representation. Under this isomorphism, $\kappa_2=3-\sigma$ corresponds to the second $K$-theoretic Chern class of that representation, so $F_\gamma^nR(\PGL_2)=(\kappa_2^{\lceil n/2\rceil})$ by \cite[Section~5 and Appendix~S]{KM21}. Together with $F_\gamma^nR(\bG_m)=(\tau_1^n)$ for $\tau_1=1-\lambda^{-1}$, this gives
\[
F_\gamma^nR(\bG_m\times\PGL_2)
=(\tau_1^i\kappa_2^j:i,j\ge0,\ i+2j\ge n).
\]

The maps $R(\GL_2)\twoheadrightarrow K_0(\sH_g)$ for even $g$ and $R(\bG_m\times\PGL_2)\twoheadrightarrow K_0(\sH_g)$ for odd $g$ in \Cref{thm:even,thm:odd} are surjections of augmented $\lambda$-rings, so they induce surjections on every term of the gamma-filtration.
\end{proof}

\begin{proof}[Proof of \Cref{thm:introleading}]
By \Cref{prop:gamma}, the graded classes of $\delta_1,\delta_2$ for even $g$, and $\tau_1,\kappa_2$ for odd $g$, generate $\operatorname{gr}_\gamma K_0(\sH_g)$. The lowest-degree terms in \Cref{thm:introfc} give the relations among $c_1,c_2$ or $t,c_2$ in \eqref{eq:chow}, so sending the Chow generators to these graded classes defines the surjections. For odd $g$, the map factors through $CH^\bullet(\sH_g)/(c_3)$ because $c_3^K(\cE)=0$ by \Cref{lem:cEchern}.
\end{proof}

After extension of scalars to $\bQ$ or $\bF_p$, we can compute the associated graded ring by localizing at the kernel of the rank homomorphism.

\begin{proposition}\label{prop:augmentation}
Let $\Bbbk=\bQ$ or $\Bbbk=\bF_p$ with $p$ prime, and let $\mathfrak m=\ker(\operatorname{rank}\colon K_0(\sH_g)\otimes \Bbbk\to \Bbbk)$. Give $K_0(\sH_g)\otimes \Bbbk$ the filtration
\[
F^n\bigl(K_0(\sH_g)\otimes \Bbbk\bigr)
=\operatorname{im}\bigl(F_\gamma^nK_0(\sH_g)\otimes \Bbbk\longrightarrow K_0(\sH_g)\otimes \Bbbk\bigr).
\]
There are natural surjections
\begin{equation}\label{eq:chowreduction}
\begin{aligned}
CH^\bullet(\sH_g)\otimes \Bbbk
&\twoheadrightarrow\operatorname{gr}_F\bigl(K_0(\sH_g)\otimes \Bbbk\bigr)
&& (g\text{ even}),\\
\bigl(CH^\bullet(\sH_g)/(c_3)\bigr)\otimes \Bbbk
&\twoheadrightarrow\operatorname{gr}_F\bigl(K_0(\sH_g)\otimes \Bbbk\bigr)
&& (g\text{ odd}),
\end{aligned}
\end{equation}
sending $c_1,c_2$ to the classes of $\delta_1,\delta_2$ in even genus and $t,c_2$ to those of $\tau_1,\kappa_2$ in odd genus. Moreover, localization gives
\[
\begin{aligned}
\operatorname{gr}_F\bigl(K_0(\sH_g)\otimes \Bbbk\bigr)
&\cong\operatorname{gr}_F\bigl((K_0(\sH_g)\otimes \Bbbk)_{\mathfrak m}\bigr),\\
\dim_\Bbbk\operatorname{gr}_F\bigl(K_0(\sH_g)\otimes \Bbbk\bigr)
&=\dim_\Bbbk\bigl(K_0(\sH_g)\otimes \Bbbk\bigr)_{\mathfrak m}.
\end{aligned}
\]
\end{proposition}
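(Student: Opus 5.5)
The proof has three steps: build the surjections from the integral statement, identify the filtration with $\mathfrak m$-adic powers, and then localize at $\mathfrak m$.

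\emph{Surjections.} By \Cref{prop:gamma}, the filtration $F$ on $R:=K_0(\sH_g)\otimes\Bbbk$ is the image of the monomial filtration. For even $g$ it is
\[
F^n=(\delta_1^i\delta_2^j:\ i+2j\ge n),
\]
and for odd $g$ it is the same with $\tau_1,\kappa_2$ in place of $\delta_1,\delta_2$. The surjection $\operatorname{gr}_\gamma K_0(\sH_g)\otimes\Bbbk\to\operatorname{gr}_F R$ is degreewise onto. Composing it with the surjections of \Cref{thm:introleading} tensored with $\Bbbk$ gives \eqref{eq:chowreduction}. This uses right exactness of $\otimes\Bbbk$ and the fact that the images of $F^n_\gamma$ compute $F^n$.

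\emph{Filtration versus $\mathfrak m$-powers.} The generators $\delta_1,\delta_2$ (or $\tau_1,\kappa_2$) lie in $\mathfrak m$, since they have rank zero. Because $R$ is generated as a $\Bbbk$-algebra by $\delta_1,\delta_2$ (using the discriminant relation, which makes $\lambda^{-1}$ a polynomial), $\mathfrak m$ is generated by them. This gives the inclusions
\[
\mathfrak m^n\subseteq F^n\subseteq\mathfrak m^{\lceil n/2\rceil}.
\]
Since $R$ is finite-dimensional, it is Artinian and splits as a product $R\cong R_{\mathfrak m}\times R'$. Here $R'$ is the product of the localizations at the other maximal ideals, on which $\mathfrak m$ generates the unit ideal. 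Hence $\mathfrak m^n$ stabilizes to the idempotent ideal $0\times R'$ for large $n$, and $F^n$ stabilizes to the same ideal, squeezed between $\mathfrak m^n$ and $\mathfrak m^{\lceil n/2\rceil}$.

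\emph{Localization and dimension count.} Since $F^n\supseteq 0\times R'$ for every $n$, each $F^n$ is the preimage of its image $F^nR_{\mathfrak m}$ in $R_{\mathfrak m}$. This gives $F^n/F^{n+1}\cong F^nR_{\mathfrak m}/F^{n+1}R_{\mathfrak m}$, and so $\operatorname{gr}_F R\cong\operatorname{gr}_F R_{\mathfrak m}$. In $R_{\mathfrak m}$ the maximal ideal is nilpotent, so $F^nR_{\mathfrak m}=0$ for large $n$ by the upper inclusion above. The filtration is then finite and exhaustive, and the dimension of the associated graded equals $\dim_\Bbbk R_{\mathfrak m}$.

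The main subtlety is the claim that $\mathfrak m$ is generated by the Chern-class generators, that is, that $\lambda^{\pm1}$ lies in $\Bbbk[\delta_1,\delta_2]$. I would handle this with the relation $\lambda^{4g+2}=1$ from \Cref{prop:odddisc} ($\lambda^{8g+4}=1$ for odd $g$), which writes $\lambda^{-1}$ as a power of $\lambda=(1-\delta_1)^{-1}$ and hence as a polynomial in $\delta_1$. This is the same way \Cref{thm:introfc} has no inverted variables.
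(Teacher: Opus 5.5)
Your proposal is correct and follows essentially the same route as the paper. You compose the base-changed surjection of \Cref{thm:introleading} with $\operatorname{gr}_\gamma K_0(\sH_g)\otimes\Bbbk\twoheadrightarrow\operatorname{gr}_F$, squeeze $F^n$ between $\mathfrak m^n$ and $\mathfrak m^{\lceil n/2\rceil}$, and use the Artinian product decomposition to pass to $(K_0(\sH_g)\otimes\Bbbk)_{\mathfrak m}$, where the filtration terminates.

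There is one small slip in your last paragraph. The class $\lambda^{-1}=1-\delta_1$ is already a polynomial in $\delta_1$; it is $\lambda$ that needs the relation, via $\lambda=\lambda^{-(4g+1)}=(1-\delta_1)^{4g+1}$, and $\lambda=(1-\tau_1)^{8g+3}$ for odd $g$. The paper avoids this step by reading off $\mathfrak m=(\delta_1,\delta_2)$, respectively $(\tau_1,\kappa_2)$, directly from \Cref{prop:gamma} with $n=1$.
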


\begin{proof}
Tensoring the surjection of \Cref{thm:introleading} with $\Bbbk$ and composing with the natural surjection
\begin{equation}\label{eq:gradedbasechange}
\bigl(\operatorname{gr}_\gamma K_0(\sH_g)\bigr)\otimes \Bbbk
\twoheadrightarrow\operatorname{gr}_F\bigl(K_0(\sH_g)\otimes \Bbbk\bigr)
\end{equation}
gives \eqref{eq:chowreduction}.

By \Cref{prop:gamma}, the ideal $\mathfrak m$ is $(\delta_1,\delta_2)$ for even $g$ and $(\tau_1,\kappa_2)$ for odd $g$, and
\[
\mathfrak m^n\subseteq F^n\bigl(K_0(\sH_g)\otimes \Bbbk\bigr)
\subseteq\mathfrak m^{\lceil n/2\rceil}.
\]
At every maximal ideal other than $\mathfrak m$, the filtration is constant and has zero associated graded ring. The Artinian product decomposition \cite[Tag~00JB]{Stacks} therefore shows that localization at $\mathfrak m$ preserves the associated graded ring. There the filtration terminates in zero, so its associated graded ring has dimension $\dim_\Bbbk(K_0(\sH_g)\otimes\Bbbk)_{\mathfrak m}$.
\end{proof}

The surjection in \Cref{eq:gradedbasechange} need not be injective. For example, $R(\mmu2)\cong\bZ[z]/(z^2-2z)$, where $z=1-[\mathrm{sgn}]$ for the nontrivial character $\mathrm{sgn}$, and the gamma-filtration is given by powers of $(z)$. Thus,
\[
\bigl(\operatorname{gr}_\gamma R(\mmu2)\bigr)\otimes\bF_2\cong\bF_2[t],
\qquad
\operatorname{gr}_F\bigl(R(\mmu2)\otimes\bF_2\bigr)\cong\bF_2[t]/(t^2).
\]

After reducing the defining polynomials in \Cref{thm:introfc} modulo $p$, we take the lowest-degree terms of their combinations to obtain relations in $\operatorname{gr}_F(K_0(\sH_g)\otimes\bF_p)$. We prove that these relations suffice by comparing dimensions using \Cref{prop:augmentation}. We compute this dimension by counting the points of $\Spec(K_0(\sH_g)\otimes\overline{\bQ})$ specializing to $\mathfrak m$. For a positive integer $n$, let $v_p(n)$ denote the exponent of $p$ in $n$.

\begin{proposition}\label{prop:local-lengths}
Let $g\ge2$, let $p$ be a prime, and let $\mathfrak m=\ker(\operatorname{rank}\colon K_0(\sH_g)\otimes\bF_p\to\bF_p)$. Then
\[
\dim_{\bF_p}\bigl(K_0(\sH_g)\otimes\bF_p\bigr)_{\mathfrak m}=
\begin{cases}
p^{v_p(2g+1)}+\dfrac{p^{v_p(g(g+1))}-1}{2},&p\ne2,\\[5pt]
2^{v_2(g)+1}+2,&p=2,\ g\text{ even},\\[2pt]
2^{v_2(g+1)+1}+4,&p=2,\ g\text{ odd}.
\end{cases}
\]
For $\mathfrak m=\ker(\operatorname{rank}\colon K_0(\sH_g)\otimes\bQ\to\bQ)$, we have $(K_0(\sH_g)\otimes\bQ)_{\mathfrak m}\cong\bQ$.
\end{proposition}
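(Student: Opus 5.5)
Since $K_0(\sH_g)$ is finite free over $\bZ$ (\Cref{prop:characterranks}), $B:=K_0(\sH_g)\otimes W(\overline{\bF}_p)$ is finite flat over the Witt vectors, or over the ring of integers $\cO$ of a suitable finite extension of $\bQ_p$ containing all relevant roots of unity. The generic fiber $B\otimes\overline{\bQ}_p$ is reduced with $r_g$ points, listed in \Cref{prop:evenspectrum,prop:oddspectrum}. For a finite flat $\cO$-algebra, the length of the localization of the special fiber at a maximal ideal $\mathfrak m$ equals the number of geometric generic points specializing to $\mathfrak m$, counted with multiplicity. Here all multiplicities are one because the generic fiber is reduced. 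Thus the plan is to count the points of $\Spec(K_0(\sH_g)\otimes\overline{\bQ})$ whose coordinates reduce to the augmentation point, i.e.\ to $(\nu,\lambda)\equiv(2,1)$ for even $g$ and $(\sigma,\lambda)\equiv(3,1)$ for odd $g$, modulo the maximal ideal of $\cO$. Since $\mathfrak m$ is $\bF_p$-rational, base change from $\bF_p$ to $\overline{\bF}_p$ does not change the length.

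A point reduces to the augmentation point iff its coordinates $\lambda$ and $\rho$ (respectively the eigenvalues $\alpha,\beta$ on $\cV$) are roots of unity reducing to $1$. A root of unity reduces to $1$ modulo $p$ iff its order is a power of $p$. So I would go through each family in \Cref{prop:evenspectrum,prop:oddspectrum} and count the members with $p$-power-order data.

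In the even case:
\begin{itemize}
\item Family (1) needs $\zeta$ of $p$-power order in $\mmu{2g}\cup\mmu{2g+2}$, up to inversion. For odd $p$ this gives $(p^{v_p(g)}-1)/2+(p^{v_p(g+1)}-1)/2+1$, and since $p$ divides at most one of $g,g+1$ this equals $(p^{v_p(g(g+1))}-1)/2+1$. For $p=2$ it gives the $2$-power roots in $\mmu{2g}\cup\mmu{2g+2}$ up to inversion, roughly $2^{v_2(g)}+1$ plus $\zeta=-1$ from $\mmu{2g+2}$.
\item Family (2) has $\lambda=-1$, which reduces to $1$ only when $p=2$. It contributes the pairs $\{\zeta,-\zeta^{-1}\}$ with $\zeta$ of $2$-power order.
\item Family (3) needs $\zeta\in\mmu{2g+1}$ of $p$-power order with $\beta$ of $p$-power order. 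This gives $p^{v_p(2g+1)}-1$ points for odd $p$ (four per inverse pair, halved by the $\beta$-constraint) and $0$ for $p=2$.
\end{itemize}
Adding family (1) and family (3) for odd $p$ gives $p^{v_p(2g+1)}+(p^{v_p(g(g+1))}-1)/2$. For $p=2$ the terms should assemble to $2^{v_2(g)+1}+2$.

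The odd case is analogous, with $\lambda\in\{\pm1,\pm i\}$ and $\rho$. For $p=2$, the square roots $\pm i$ and the extra $\lambda=-1$ points account for the $+4$.

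The rational statement is the same specialization argument over $\bQ$. The point $(2,1)$, respectively $(3,1)$, is the only point with all coordinates trivial, since it is the identity inertia component, and the point is reduced. So $(K_0\otimes\bQ)_{\mathfrak m}\cong\bQ$.

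The main obstacle is the careful bookkeeping in the $p=2$ and $p\mid 2g+1$ cases. One has to track which choices of square roots $\beta$ (with $\beta^2\in\{\zeta^{g-1},\zeta^g\}$), or of $\lambda$ with $\lambda^2\in\{\rho^g,\rho^{g+1}\}$, have $p$-power order, and handle the identifications $\zeta\sim\zeta^{-1}$ and fixed points such as $\pm1$ and $\pm i$ without double counting. I would organize this by first fixing $\rho$ of $p$-power order and then counting the admissible $\lambda$.
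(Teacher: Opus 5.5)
Your framework is the paper's. Finite freeness over $\bZ$ and reducedness of the generic fiber, over a henselian base, identify the length with the number of points of $\Spec(K_0(\sH_g)\otimes\overline{\bQ})$ reducing to the augmentation point. Your $W(\overline{\bF}_p)$ or $\cO$ serves as that base; the paper uses that $\bZ_p$ is henselian to split off the summand at $\mathfrak m$. A root of unity then reduces to $1$ exactly when its order is a power of $p$. Your odd-$p$ counts are correct and agree with the paper, including the halving in the $\lambda^4\ne1$ family: exactly one square root of a root of unity of odd $p$-power order again has $p$-power order. Your treatment of $\bQ$ is also correct.

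The gap is that the two $p=2$ formulas are never derived. You only assert that the terms ``should assemble,'' and the one concrete count you give overcounts.

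For even $g$, the $2$-power-order elements of $\mmu{2g}\cup\mmu{2g+2}$ are exactly $\mmu{2^{v_2(g)+1}}$. This already contains $-1$, because the $2$-part of $\mmu{2g+2}$ is only $\{\pm1\}$ when $g+1$ is odd. So family (1) gives exactly $2^{v_2(g)}+1$ points, and adding $\zeta=-1$ again would lead to a total of $2^{v_2(g)+1}+3$. The count you are missing is family (2). The admissible $\zeta$ there are $\pm1$ (from $\zeta^{2g+2}=1$) and the $2^{v_2(g)+1}$ roots of exact order $2^{v_2(g)+2}$ (from $\zeta^{2g}=-1$). The map $\zeta\mapsto-\zeta^{-1}$ acts freely on these, giving another $2^{v_2(g)}+1$ points.

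For odd $g$, ``the square roots $\pm i$ and the extra $\lambda=-1$ points account for the $+4$'' is not an actual count. All four values $\lambda\in\{\pm1,\pm i\}$ reduce to $1$ modulo $2$, and each contributes $2^{v_2(g+1)-1}+1$ points:
\begin{itemize}
\item for $\lambda=\pm1$, $\rho$ runs over $\mmu{2^{v_2(g+1)}}$ modulo inversion;
\item for $\lambda=\pm i$, $\rho$ runs over $\rho=-1$ together with the $2^{v_2(g+1)}$ roots of exact order $2^{v_2(g+1)+1}$, modulo inversion.
\end{itemize}
The sum is $2^{v_2(g+1)+1}+4$. With these counts supplied, your argument is complete and coincides with the paper's proof.
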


\begin{proof}
Choose an extension of the $p$-adic valuation to $\overline{\bQ}$. Since $\bZ_p$ is henselian, $(K_0(\sH_g)\otimes\bF_p)_{\mathfrak m}$ is the reduction modulo $p$ of a finite free $\bZ_p$-algebra that is a direct summand of $K_0(\sH_g)\otimes\bZ_p$ \cite[Tags~04GG and~02KB]{Stacks}. The rank of this summand equals both $\dim_{\bF_p}(K_0(\sH_g)\otimes\bF_p)_{\mathfrak m}$ and the number of points specializing to $\mathfrak m$, since $K_0(\sH_g)\otimes\overline{\bQ}$ is reduced.

A root of unity specializes to $1$ precisely when its order is a power of $p$. Thus, in even genus, specialization to $(\nu,\lambda)=(2,1)$ is equivalent to both eigenvalues having $p$-power order. In odd genus, it is equivalent to $\rho$ and $\lambda$ having $p$-power order, since $\sigma-3=(\rho-1)^2/\rho$.

For odd $p$, the points with $\lambda=1$ in \Cref{prop:evenspectrum,prop:oddspectrum} contribute $(p^{v_p(g(g+1))}+1)/2$, by pairing each root of $p$-power order with its inverse. Among the points with $\lambda^4\ne1$, there are $(p^{v_p(2g+1)}-1)/2$ unordered pairs $\{\rho,\rho^{-1}\}$ of nonidentity roots of $p$-power order. In the coordinates of \Cref{prop:evenspectrum}, $\beta^2$ takes the values $\zeta^{g-1},\zeta^g$, while in those of \Cref{prop:oddspectrum}, $\lambda^2$ takes the values $\rho^g,\rho^{g+1}$. For each value, exactly one of its two square roots specializes to $1$, so each pair $\{\rho,\rho^{-1}\}$ produces two points specializing to $\mathfrak m$. No point with $\lambda=-1$ or $\lambda=\pm i$ specializes to $\mathfrak m$, so the total number is
\[
\frac{p^{v_p(g(g+1))}+1}{2}+p^{v_p(2g+1)}-1
=p^{v_p(2g+1)}+\frac{p^{v_p(g(g+1))}-1}{2}.
\]

For $p=2$ and even $g$, the roots of $2$-power order in $\mmu{2g}\cup\mmu{2g+2}$ are $\mmu{2^{v_2(g)+1}}$. Pairing each root with its inverse gives $2^{v_2(g)}+1$ points with $\lambda=1$. For $\lambda=-1$, the solutions specializing to $1$ are $\zeta=\pm1$ and the roots of order $2^{v_2(g)+2}$. The substitution $\zeta\mapsto-\zeta^{-1}$ acts freely on them, again giving $2^{v_2(g)}+1$ points. No point with $\lambda\ne\pm1$ contributes, since $2g+1$ is odd.

For $p=2$ and odd $g$, the roots of $2$-power order in $\mmu g\cup\mmu{g+1}$ are $\mmu{2^{v_2(g+1)}}$. Only $1$ and $-1$ equal their own inverses, so the two choices $\lambda=\pm1$ give $2^{v_2(g+1)}+2$ points in total. For $\lambda=\pm i$, the allowed ratios are $\rho=-1$ and the roots of order $2^{v_2(g+1)+1}$. Only $-1$ equals its own inverse, and the two choices of $\lambda$ again give $2^{v_2(g+1)}+2$ points. Again, no point with $\lambda^4\ne1$ contributes, since $2g+1$ is odd. Finally, $(K_0(\sH_g)\otimes\bQ)_{\mathfrak m}\cong\bQ$, since $K_0(\sH_g)\otimes\bQ$ is reduced and the residue field at $\mathfrak m$ is $\bQ$.
\end{proof}

\begin{theorem}\label{thm:graded-prime-coefficients}
Let $g\ge2$, let $p$ be a prime, and let $F$ be the image of the gamma-filtration on $K_0(\sH_g)\otimes\bF_p$. For odd $p$, the associated graded ring $\operatorname{gr}_F(K_0(\sH_g)\otimes\bF_p)$ has the following presentations:
\[
\begin{array}{c|c|c}
&g\text{ even}&g\text{ odd}\\ \hline
p\mid2g+1
&\bF_p[\delta_1]/(\delta_1^{p^{v_p(2g+1)}})
&\bF_p[\tau_1]/(\tau_1^{p^{v_p(2g+1)}})\\[3pt]
p\mid g(g+1)
&\bF_p[\delta_2]/(\delta_2^{(p^{v_p(g(g+1))}+1)/2})
&\bF_p[\kappa_2]/(\kappa_2^{(p^{v_p(g(g+1))}+1)/2})\\[3pt]
p\nmid g(g+1)(2g+1)&\bF_p&\bF_p.
\end{array}
\]
For $p=2$, we have
\[
\operatorname{gr}_F\bigl(K_0(\sH_g)\otimes\bF_2\bigr)\cong
\begin{cases}
\bF_2[\delta_1,\delta_2]/(\delta_1^2,\delta_1\delta_2,
\delta_2^{\,2^{v_2(g)+1}+1}),&g\text{ even},\\[3pt]
\bF_2[\tau_1,\kappa_2]/(\tau_1^4,\tau_1^2\kappa_2+\kappa_2^2),
&g\equiv1\pmod4,\\[3pt]
\bF_2[\tau_1,\kappa_2]/(\tau_1^4,\tau_1^2\kappa_2,
\kappa_2^{\,2^{v_2(g+1)}+1}),&g\equiv3\pmod4.
\end{cases}
\]
The generators on the right map to the associated graded classes of the corresponding $K$-theoretic Chern classes.
\end{theorem}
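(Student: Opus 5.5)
The strategy is a squeeze on dimensions. By \Cref{prop:augmentation}, $\operatorname{gr}_F(K_0(\sH_g)\otimes\bF_p)$ is a quotient of $CH^\bullet(\sH_g)\otimes\bF_p$ (resp.\ of $(CH^\bullet(\sH_g)/(c_3))\otimes\bF_p$). Its $\bF_p$-dimension equals the local length computed in \Cref{prop:local-lengths}. For each case of the table, I will find enough homogeneous relations in $\operatorname{gr}_F$ to make the proposed ring $B$ surject onto $\operatorname{gr}_F$. Then I will check that $\dim_{\bF_p}B$ equals the local length, so the surjection $B\twoheadrightarrow\operatorname{gr}_F$ is an isomorphism. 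The dimension check is routine. For example, $\bF_2[\tau_1,\kappa_2]/(\tau_1^4,\tau_1^2\kappa_2,\kappa_2^{M+1})$ has monomial basis $\tau_1^i$ for $i\le3$ together with $\tau_1^{\le1}\kappa_2^{j}$ for $1\le j\le M$. This gives $4+2M$, which matches $2^{v_2(g+1)+1}+4$ with $M=2^{v_2(g+1)}$. Likewise $\bF_2[\tau_1,\kappa_2]/(\tau_1^4,\tau_1^2\kappa_2+\kappa_2^2)$ has basis $1,\tau_1,\tau_1^2,\tau_1^3,\kappa_2,\tau_1\kappa_2$, of dimension $6=2^{1+1}+2$ since $v_2(g+1)=1$ for $g\equiv1\pmod 4$. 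The odd-$p$ cases give $p^{v_p(2g+1)}$ or $(p^{v_p(g(g+1))}+1)/2$, and $v_p(2g+1)$ and $v_p(g(g+1))$ cannot both be positive.

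To produce the relations, I first reduce the lowest-degree (purple) terms of \Cref{thm:introfc} modulo $p$. For odd $p\mid 2g+1$ in even genus, $(4g+2)\delta_1\equiv0$, and $g(g-1)\delta_1^2-4g(g+1)\delta_2$ has a unit coefficient on $\delta_2$ (as in \Cref{prop:chowfibers}). So $\delta_2$ is a multiple of $\delta_1^2$ in $\operatorname{gr}_F$, and $\operatorname{gr}_F$ is generated by $\delta_1$. For odd $p\mid g(g+1)$, the relation $(4g+2)\delta_1$ has unit coefficient, so $\delta_1=0$ in degree $1$ and $\operatorname{gr}_F$ is generated by $\delta_2$. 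In the remaining case $\operatorname{gr}_F$ is already $\bF_p$. The odd-genus argument is the same, with $\tau_1,\kappa_2$ in place of $\delta_1,\delta_2$.

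Once $\operatorname{gr}_F$ is monogenic, I expect to avoid computing the truncation exponent directly. A graded quotient of $\bF_p[x]$ of finite dimension $L$ is $\bF_p[x]/(x^{L'})$ for some $L'$. Its dimension is $L'$ when $x$ has degree one and $L'$ when $x=\delta_2$ or $\kappa_2$ has degree two. Therefore the exponent is forced to equal the local length from \Cref{prop:local-lengths}, and no explicit relation of top degree is needed. The same principle handles odd genus.

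The main obstacle is $p=2$. Here both purple linear terms vanish modulo $2$, so degree-one and degree-two relations must be extracted from $\bF_2$-combinations of the three relations and their higher-order terms. For even $g$ I need $\delta_1^2$ and $\delta_1\delta_2$ to vanish in $\operatorname{gr}_F$. For odd $g$ I need $\tau_1^4$ and either $\tau_1^2\kappa_2$ or $\tau_1^2\kappa_2+\kappa_2^2$. My first attempt is to use the simpler bundle-class forms of \Cref{thm:intromain}, together with the identities $\lambda^{4g+2}=1$ (resp.\ $\lambda^{8g+4}=1$) from \Cref{prop:odddisc} and the elimination polynomials of \Cref{prop:evenspectrum,prop:oddspectrum}. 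I would expand these in $\delta_1$ (resp.\ $\tau_1$) modulo $2$: for instance, $1-\lambda^{-(8g+4)}$ reduces modulo $2$ to a power of $\tau_1$ times a unit of the local ring. I would then reduce $F,G$ modulo $2$ and modulo $\mathfrak m^{3}$, or $\mathfrak m^{5}$ as needed, to read off the quadratic and quartic leading forms, with the parity of $(g+1)/2$ deciding which quartic form appears.

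Once these low-degree relations hold, the quotient by them contains only finitely many monomials in $\kappa_2$ (resp.\ $\delta_2$) with the other generator of low degree. The top power $\kappa_2^{M+1}$ (resp.\ $\delta_2^{2^{v_2(g)+1}+1}$) then follows from the dimension count, as in the monogenic case, because $\operatorname{gr}_F$ is graded and its dimension is known. The remaining risk is that the low-degree leading forms are hard to isolate by hand. If so, I would verify them by computing the $2$-adic local factor directly. This means identifying the points specializing to $\mathfrak m$ from \Cref{prop:local-lengths} and computing the leading forms of the vanishing ideal of these points.
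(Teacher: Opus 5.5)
Your odd-$p$ argument is the paper's. At $p=2$, your sources for the low-degree relations are also the paper's: the third relation gives $\delta_1^2$, resp.\ $\tau_1^4$, times a unit, and then you read off the leading forms of the first two relations modulo $2$. The gap is your claim that at $p=2$ the top power $\delta_2^{2^{v_2(g)+1}+1}$ or $\kappa_2^{2^{v_2(g+1)}+1}$ ``follows from the dimension count, as in the monogenic case.'' That principle works for $\bF_p[x]$ only because its graded quotients $(x^n)$ are totally ordered by inclusion. It fails for two generators. For $B_0=\bF_2[\delta_1,\delta_2]/(\delta_1^2,\delta_1\delta_2)$, the quotients $B_0/(\delta_2^{L-1})$ and $B_0/(\delta_1,\delta_2^{L})$ both have dimension $L$. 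So in even genus you must also show that $\bar\delta_1\neq0$ in $F^1/F^2$. The paper gets this because, modulo $2$, none of the three defining polynomials has a constant or linear term. For $g\equiv3\pmod4$ even that kind of input is not enough. Take $B_0=\bF_2[\tau_1,\kappa_2]/(\tau_1^4,\tau_1^2\kappa_2)$ and grant that $\operatorname{gr}_F$ agrees with $B_0$ in degrees $\le3$. If $\kappa_2^j$ survives exactly for $j\le a$ and $\tau_1\kappa_2^j$ exactly for $j\le b$, then $\dim=4+a+b$. The known length then only yields $a+b=2^{v_2(g+1)+1}$, not $a=b$.

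The missing idea here is the paper's parity decomposition. Modulo $2$ one has $(1-\tau_1)^2\equiv1+\tau_1^2$, so all three relations lie in $\bF_2[\tau_1^2,\kappa_2]$. Hence the local ring is free on $1,\tau_1$ over the subalgebra generated by $\tau_1^2,\kappa_2$, and this is compatible with $F$ because both generators have weighted degree two. All three relations also lie in $(\tau_1^2,\kappa_2)^2\subseteq F^4$, which makes $\tau_1^2,\kappa_2$ independent in degree two. Together these reduce the count to the even part, of dimension $2^{v_2(g+1)}+2$, where it does force the exponent.

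Separately, your check for $g\equiv1\pmod4$ is wrong on both sides. The odd-genus local length is $2^{v_2(g+1)+1}+4=8$, not $6$. Also, $\bF_2[\tau_1,\kappa_2]/(\tau_1^4,\tau_1^2\kappa_2+\kappa_2^2)$ is free over $\bF_2[\tau_1]/(\tau_1^4)$ on $1,\kappa_2$, since $\kappa_2^2=\tau_1^2\kappa_2$ is monic in $\kappa_2$. So it has dimension $8$; your basis omits $\tau_1^2\kappa_2$ and $\tau_1^3\kappa_2$. The two errors cancel, and this is the one $p=2$ case where your squeeze works as stated, since no top power is needed. It still requires actually computing $\Psi_g\equiv\tau_1^2\kappa_2+\tfrac{g+1}{2}\kappa_2^2$ and $\Phi_g\in(\tau_1^2,\kappa_2)^2$ modulo $2$.
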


\begin{proof}
Let $\mathfrak m=\ker(\operatorname{rank}\colon K_0(\sH_g)\otimes\bF_p\to\bF_p)$. By \Cref{prop:augmentation}, we may work in $\bigl(K_0(\sH_g)\otimes\bF_p\bigr)_{\mathfrak m}$, whose length is the dimension of the associated graded ring. For odd $p$, the surjections in \eqref{eq:chowreduction} and \Cref{prop:chowfibers} show that the associated graded ring is generated by $\delta_1$ in even genus or $\tau_1$ in odd genus when $p\mid2g+1$, and by $\delta_2$ or $\kappa_2$, respectively, when $p\mid g(g+1)$. Otherwise it is $\bF_p$. A finite-dimensional graded quotient of a polynomial ring in one variable is defined by a power of that variable, so \Cref{prop:local-lengths} determines the exponents.

For $p=2$, we use $\Phi_g,\Psi_g$ to denote the first two relations in the presentation in \Cref{thm:introfc}, reduced modulo $2$. Suppose first that $g$ is even. In characteristic two, the third defining polynomial in \eqref{eq:K0-Hg-chern-class} factors as
\[
\sum_{i=1}^{2g+1}\binom{2g+1}{i}\delta_1^{2i}
=\delta_1^2\sum_{j=0}^{2g}(1-\delta_1^2)^j.
\]
Since $\sum_{j=0}^{2g}(1-\delta_1^2)^j$ has constant term one, it is a unit in $(K_0(\sH_g)\otimes\bF_2)_{\mathfrak m}$, so $\delta_1^2=0$. The coefficient calculation in \Cref{app:chernproofs} gives
\begin{equation}\label{eq:even-mod-two-leading}
\Phi_g\equiv0,
\qquad \Phi_g+\Psi_g\equiv\delta_1\delta_2
\pmod{(\delta_1^2,\delta_2^2)}.
\end{equation}
Since $\delta_2^2$ has weighted degree four, the associated graded ring $\operatorname{gr}_F(K_0(\sH_g)\otimes\bF_2)$ is a quotient of $\bF_2[\delta_1,\delta_2]/(\delta_1^2,\delta_1\delta_2)$. The polynomials $\Phi_g,\Psi_g,\sum_{i=1}^{2g+1}\binom{2g+1}{i}\delta_1^{2i}$ have no terms of total degree one in $\delta_1,\delta_2$, so the class of $\delta_1$ in $F^1/F^2$ is nonzero. In higher degrees, only powers of $\delta_2$ can remain, and the length $2^{v_2(g)+1}+2$ forces the first vanishing power to be $\delta_2^{2^{v_2(g)+1}+1}$.

For odd $g$, the polynomials $\Phi_g,\Psi_g,\sum_{i=1}^{2g+1}\binom{2g+1}{i}\tau_1^{4i}$ belong to $\bF_2[\tau_1^2,\kappa_2]$. The relation $\sum_{i=1}^{2g+1}\binom{2g+1}{i}\tau_1^{4i}=0$ gives $\tau_1^4=0$ in $\bigl(K_0(\sH_g)\otimes\bF_2\bigr)_{\mathfrak m}$. The calculation in \Cref{app:chernproofs} gives
\begin{equation}\label{eq:odd-mod-two-leading}
\begin{aligned}
\Phi_g&\equiv0\pmod{(\tau_1^2,\kappa_2)^2},\\
\Psi_g&\equiv\tau_1^2\kappa_2+\frac{g+1}{2}\kappa_2^2
\pmod{(\tau_1^2,\kappa_2)^3}.
\end{aligned}
\end{equation}
If $g\equiv1\pmod4$, these relations give a surjection
\[
\bF_2[\tau_1,\kappa_2]/(\tau_1^4,\tau_1^2\kappa_2+\kappa_2^2)
\twoheadrightarrow\operatorname{gr}_F\bigl(K_0(\sH_g)\otimes\bF_2\bigr).
\]
The source is free with basis $1,\kappa_2$ over $\bF_2[\tau_1]/(\tau_1^4)$, so both sides have dimension eight by \Cref{prop:local-lengths}, and the surjection is an isomorphism.

For $g\equiv3\pmod4$, the ideal $(\Phi_g,\Psi_g,\sum_{i=1}^{2g+1}\binom{2g+1}{i}\tau_1^{4i})$ is extended from $\bF_2[\tau_1^2,\kappa_2]$, so $\bigl(K_0(\sH_g)\otimes\bF_2\bigr)_{\mathfrak m}$ is free with basis $1,\tau_1$ over its subalgebra generated by $\tau_1^2,\kappa_2$. Since $\tau_1^2$ and $\kappa_2$ both have weighted degree two, the decomposition respects the filtration. Thus, $\operatorname{gr}_F(K_0(\sH_g)\otimes\bF_2)$ is free with basis $1,\tau_1$ over its subalgebra generated by $\tau_1^2,\kappa_2$.

By \Cref{eq:odd-mod-two-leading}, the graded classes of $\tau_1^2,\kappa_2$ are linearly independent, with $(\tau_1^2)^2=\tau_1^2\kappa_2=0$, so the homogeneous component of degree $2j$ in this subalgebra is spanned by $\kappa_2^j$ for $j\ge2$. Its dimension is $\frac12\dim_{\bF_2}(K_0(\sH_g)\otimes\bF_2)_{\mathfrak m}=2^{v_2(g+1)}+2$, which forces the first vanishing power to be $\kappa_2^{2^{v_2(g+1)}+1}$. The free basis $1,\tau_1$ then gives the asserted presentation.
\end{proof}

\begin{remark}\label{prop:g3mod2}
For $g=3$, \Cref{thm:graded-prime-coefficients} gives
\[
\operatorname{gr}_F\bigl(K_0(\sH_3)\otimes\bF_2\bigr)
\cong\bF_2[\tau_1,\kappa_2]/(\tau_1^4,\tau_1^2\kappa_2,\kappa_2^5).
\]
This ring has dimension $12$, so twelve of the twenty-eight points of $\Spec(K_0(\sH_3)\otimes\overline{\bQ})$ specialize to the kernel of the rank homomorphism modulo $2$. The map from $CH^\bullet(\sH_3)\otimes\bF_2\cong\bF_2[t,c_2,c_3]$ has kernel $(c_3,t^4,t^2c_2,c_2^5)$. Thus, the associated graded ring defines a closed subscheme of length $12$ in the plane $c_3=0$ of $\Spec(CH^\bullet(\sH_3)\otimes\bF_2)\cong\bA^3$. \Cref{fig:g3-specialization} in \Cref{app:expanded} illustrates this specialization and a monomial basis for $\operatorname{gr}_F\bigl(K_0(\sH_3)\otimes\bF_2\bigr)$.
\end{remark}

\section{Future directions}\label{sec:future-directions}

Over a field of characteristic zero, the technique of isotypic decomposition that we used to compute $K_0(\sH_g)$ suggests further computations for the following moduli stacks.

\begin{enumerate}[leftmargin=2.4em]
\item \emph{Uniform cyclic covers of the projective line.} For $r\ge2$ and $d\ge1$ with $rd=2g+2$, Arsie--Vistoli's stack of smooth uniform cyclic covers has the presentation
\[
\sH_{\mathrm{sm}}(1,r,d)\cong
[(\bA^{2g+3}\setminus\Delta)/(\GL_2/\mmu d)]
\]
by \cite[Corollary~4.2]{AV04}. Its $K_0$-ring is therefore the trivial $\mmu d$-character subring of $K_0(\sX_g)$. Our presentation in \Cref{thm:eqpres} reduces its computation to finding generators and relations for this subring, which can then be compared with the Chow ring presentations of Fulghesu--Viviani \cite{FV11}.

\item \emph{K3 surfaces which are double covers of $\mathbb P^2$.} Let $U\subset H^0(\bP^2,\cO(6))$ be the open subset of forms defining smooth sextics. The stack of double covers of $\bP^2$ branched along a smooth sextic is $[U/(\GL_3/\mmu3)]$, and its objects are K3 surfaces \cite[Example~3.5(c) and Corollary~4.2]{AV04}. By \Cref{thm:isotypic}, its $K_0$-ring is the trivial $\mmu3$-character subring of $K_0(U,\GL_3)$. Therefore, it suffices to determine the relations in $K_0(U,\GL_3)$ arising from the locus of singular sextics.

\item \emph{Smooth hypersurfaces.} Let $U\subset\bP(H^0(\bP^n,\cO(m)))$ parametrize smooth hypersurfaces of degree $m\ge3$. The morphism $[U/\SL_{n+1}]\to[U/\PGL_{n+1}]$ is a $\mmu{n+1}$-gerbe, so computing $K_0(U,\SL_{n+1})$ and taking its trivial $\mmu{n+1}$-character subring would determine $K_0([U/\PGL_{n+1}])$. For plane quartics, the stratification of the discriminant used by Di~Lorenzo--Fulghesu--Vistoli in their Chow ring calculation \cite{DLFV21} provides a starting point for computing the relevant pushforwards in equivariant $K$-theory.

\item \emph{Stable vector bundles with fixed determinant.} Let $C$ be a smooth projective complex curve of genus at least two, and fix a line bundle $\cL$ on $C$ and an integer $r\ge2$. The stack $\sM$ of stable rank-$r$ bundles equipped with an isomorphism $\det\cE\cong\cL$ is a $\mmu r$-gerbe over its coarse moduli space \cite[Section~9.2.7]{Alp26}. The $\mmu r$-character summands of $K_0(\sM)$ are the $K_0$-groups of twisted sheaves \cite[Corollary~4.11]{BS21}. In rank two, it would be useful to combine this decomposition with Thaddeus's wall-crossing construction for stable pairs \cite{Tha94} to compute these groups and their products.
\end{enumerate}

\appendix

\section{Explicit calculations and examples}
\label{app:expanded}

\subsection{Presentations in genera two and three}
\label{app:edidinhu}
\label{app:expandedbundle}

Since $\sH_2\cong\sM_2$ and $\cV$ is the Hodge bundle in genus two, the classes $\nu,\lambda,P_n(\nu,\lambda)$ specialize to Edidin--Hu's classes $\epsilon,\lambda,h_n$. Thus, \Cref{thm:even} recovers their presentation \cite[Theorem~1.1]{EH24}.

In the bundle class generators $\nu,\lambda^{\pm1}$, the first relation is
\[
-\nu^{8}\,\lambda^{-2}+7\,\nu^{6}\,\lambda^{-1}-15\,\nu^{4}+\nu^{4}\,\lambda^{-1}+10\,\nu^{2}\,\lambda-2\,\nu^{2}-\lambda^{2}+1
\]
and the second is
\[
-\nu^{7}\,\lambda^{-1}+6\,\nu^{5}-10\,\nu^{3}\,\lambda+\nu^{3}+4\,\nu\,\lambda^{2}-\nu\,\lambda+\nu.
\]

Working over $\bC$, let $\cE$ be the Hodge bundle on $\overline{\sM}_2$, whose restriction to $\sM_2$ is $\cV$. Let
\[
\delta_1=c_1^K(\det\cE),\qquad
\delta_2=c_2^K(\cE).
\]
Taking duals sends $\nu$ to $\nu\lambda^{-1}$ and $\lambda$ to $\lambda^{-1}$. Substituting $\nu\lambda^{-1}=2-\delta_1-\delta_2$ and $\lambda^{-1}=1-\delta_1$ gives
\begingroup
\setlength{\fboxsep}{1pt}
\medmuskip=3mu plus 2mu minus 3mu
\allowdisplaybreaks[4]
\begin{align*}
&K_0(\sM_2)\cong\bZ[\delta_1,\delta_2]\big/\bigl(\\*
&\quad \colorbox{HighlightGold}{$\displaystyle\textcolor{RoyalPurple}{10\delta_1}$}-53\delta_1^{2}+96\delta_2+106\delta_1^{3}-360\delta_1\delta_2-119\delta_1^{4}+700\delta_1^{2}\delta_2-440\delta_2^{2}+83\delta_1^{5}-816\delta_1^{3}\delta_2+1342\delta_1\delta_2^{2}-36\delta_1^{6}\\*
&\quad +596\delta_1^{4}\delta_2-1952\delta_1^{2}\delta_2^{2}+784\delta_2^{3}+9\delta_1^{7}-270\delta_1^{5}\delta_2+1644\delta_1^{3}\delta_2^{2}-1968\delta_1\delta_2^{3}-\delta_1^{8}+70\delta_1^{6}\delta_2-825\delta_1^{4}\delta_2^{2}+2196\delta_1^{2}\delta_2^{3}\\*
&\quad -714\delta_2^{4}-8\delta_1^{7}\delta_2+231\delta_1^{5}\delta_2^{2}-1320\delta_1^{3}\delta_2^{3}+1429\delta_1\delta_2^{4}-28\delta_1^{6}\delta_2^{2}+420\delta_1^{4}\delta_2^{3}-1170\delta_1^{2}\delta_2^{4}+364\delta_2^{5}-56\delta_1^{5}\delta_2^{3}+455\delta_1^{3}\delta_2^{4}\\*
&\quad -546\delta_1\delta_2^{5}-70\delta_1^{4}\delta_2^{4}+294\delta_1^{2}\delta_2^{5}-105\delta_2^{6}-56\delta_1^{3}\delta_2^{5}+105\delta_1\delta_2^{6}-28\delta_1^{2}\delta_2^{6}+16\delta_2^{7}-8\delta_1\delta_2^{7}-\delta_2^{8},\\[3pt]
&\quad \colorbox{HighlightGold}{$\displaystyle\textcolor{RoyalPurple}{2\delta_1^{2}-24\delta_2}$}-\delta_1^{3}+59\delta_1\delta_2-107\delta_1^{2}\delta_2+194\delta_2^{2}+119\delta_1^{3}\delta_2-475\delta_1\delta_2^{2}-83\delta_1^{4}\delta_2+614\delta_1^{2}\delta_2^{2}-455\delta_2^{3}+36\delta_1^{5}\delta_2\\*
&\quad -477\delta_1^{3}\delta_2^{2}+980\delta_1\delta_2^{3}-9\delta_1^{6}\delta_2+225\delta_1^{4}\delta_2^{2}-987\delta_1^{2}\delta_2^{3}+494\delta_2^{4}+\delta_1^{7}\delta_2-60\delta_1^{5}\delta_2^{2}+550\delta_1^{3}\delta_2^{3}-879\delta_1\delta_2^{4}+7\delta_1^{6}\delta_2^{2}\\*
&\quad -165\delta_1^{4}\delta_2^{3}+660\delta_1^{2}\delta_2^{4}-286\delta_2^{5}+21\delta_1^{5}\delta_2^{3}-240\delta_1^{3}\delta_2^{4}+390\delta_1\delta_2^{5}+35\delta_1^{4}\delta_2^{4}-195\delta_1^{2}\delta_2^{5}+91\delta_2^{6}+35\delta_1^{3}\delta_2^{5}-84\delta_1\delta_2^{6}\\*
&\quad +21\delta_1^{2}\delta_2^{6}-15\delta_2^{7}+7\delta_1\delta_2^{7}+\delta_2^{8},\\[3pt]
&\quad \colorbox{HighlightGold}{$\displaystyle\textcolor{RoyalPurple}{10\delta_1}$}-45\delta_1^2+120\delta_1^3-210\delta_1^4+252\delta_1^5-210\delta_1^6+120\delta_1^7-45\delta_1^8+10\delta_1^9-\delta_1^{10}\bigr).
\end{align*}

We highlight the lowest-degree terms in the $K$-Chern class relations. For $\overline{\sM}_2$, let $\Delta_1$ be the divisor of curves with a separating node and let $\beta=c_1^K(\cO(\Delta_1))$. Edidin--Hu's presentation \cite[Theorem~1.2, Corollary~5.8, and Theorem~5.11]{EH24} can be written in $K$-Chern classes as
\[
K_0(\overline{\sM}_2)\cong
\frac{\bZ[\delta_1,\delta_2,\beta]}
{(I_{\Delta_1}\cap I_{\overline{\sM}_2\setminus\Delta_1})+\beta I_{\Delta_1}},
\]
where the two ideals are
\begin{align*}
&I_{\Delta_1}=\bigl(\\*
&\quad \colorbox{HighlightGold}{$\displaystyle\textcolor{RoyalPurple}{2\delta_1+2\beta}$}-\delta_1^{2}-4\delta_1\beta-\beta^{2}+2\delta_1^{2}\beta+2\delta_1\beta^{2}-\delta_1^{2}\beta^{2},\\[3pt]
&\quad \colorbox{HighlightGold}{$\displaystyle\textcolor{RoyalPurple}{2\delta_1+2\beta}$}-\delta_1^{2}-3\delta_1\beta+\delta_1^{2}\beta-\delta_1\delta_2-\delta_2\beta+\delta_1\delta_2\beta,\\[3pt]
&\quad \colorbox{HighlightGold}{$\displaystyle\textcolor{RoyalPurple}{-28\delta_1^{2}-4\delta_1\beta+48\delta_2}$}+128\delta_1^{3}+36\delta_1^{2}\beta-96\delta_1\delta_2-299\delta_1^{4}-137\delta_1^{3}\beta+104\delta_1^{2}\delta_2-124\delta_2^{2}+445\delta_1^{5}+304\delta_1^{4}\beta\\*
&\quad -66\delta_1^{3}\delta_2+226\delta_1\delta_2^{2}-456\delta_1^{6}-446\delta_1^{5}\beta+24\delta_1^{4}\delta_2-189\delta_1^{2}\delta_2^{2}+120\delta_2^{3}+329\delta_1^{7}+456\delta_1^{6}\beta-4\delta_1^{5}\delta_2+80\delta_1^{3}\delta_2^{2}\\*
&\quad -172\delta_1\delta_2^{3}-165\delta_1^{8}-329\delta_1^{7}\beta-14\delta_1^{4}\delta_2^{2}+96\delta_1^{2}\delta_2^{3}-55\delta_2^{4}+55\delta_1^{9}+165\delta_1^{8}\beta-20\delta_1^{3}\delta_2^{3}+54\delta_1\delta_2^{4}-11\delta_1^{10}-55\delta_1^{9}\beta\\*
&\quad -15\delta_1^{2}\delta_2^{4}+12\delta_2^{5}+\delta_1^{11}+11\delta_1^{10}\beta-6\delta_1\delta_2^{5}-\delta_1^{11}\beta-\delta_2^{6},\\[3pt]
&\quad \colorbox{HighlightGold}{$\displaystyle\textcolor{RoyalPurple}{28\delta_1^{2}+4\delta_1\beta-48\delta_2}$}-124\delta_1^{3}-32\delta_1^{2}\beta+96\delta_1\delta_2+283\delta_1^{4}+117\delta_1^{3}\beta-104\delta_1^{2}\delta_2+700\delta_2^{2}-416\delta_1^{5}-259\delta_1^{4}\beta+66\delta_1^{3}\delta_2\\*
&\quad -2530\delta_1\delta_2^{2}+425\delta_1^{6}+386\delta_1^{5}\beta-24\delta_1^{4}\delta_2+4845\delta_1^{2}\delta_2^{2}-2520\delta_2^{3}-309\delta_1^{7}-405\delta_1^{6}\beta+4\delta_1^{5}\delta_2-5984\delta_1^{3}\delta_2^{2}+8572\delta_1\delta_2^{3}\\*
&\quad +158\delta_1^{8}+302\delta_1^{7}\beta+5030\delta_1^{4}\delta_2^{2}-14256\delta_1^{2}\delta_2^{3}+4235\delta_2^{4}-54\delta_1^{9}-157\delta_1^{8}\beta-2880\delta_1^{5}\delta_2^{2}+14420\delta_1^{3}\delta_2^{3}-12594\delta_1\delta_2^{4}\\*
&\quad +11\delta_1^{10}+54\delta_1^{9}\beta+1080\delta_1^{6}\delta_2^{2}-9360\delta_1^{4}\delta_2^{3}+17305\delta_1^{2}\delta_2^{4}-3992\delta_2^{5}-\delta_1^{11}-11\delta_1^{10}\beta-240\delta_1^{7}\delta_2^{2}+3840\delta_1^{5}\delta_2^{3}\\*
&\quad -13680\delta_1^{3}\delta_2^{4}+9956\delta_1\delta_2^{5}+\delta_1^{11}\beta+24\delta_1^{8}\delta_2^{2}-912\delta_1^{6}\delta_2^{3}+6450\delta_1^{4}\delta_2^{4}-10824\delta_1^{2}\delta_2^{5}+2274\delta_2^{6}+96\delta_1^{7}\delta_2^{3}-1700\delta_1^{5}\delta_2^{4}\\*
&\quad +6286\delta_1^{3}\delta_2^{5}-4546\delta_1\delta_2^{6}+194\delta_1^{6}\delta_2^{4}-1924\delta_1^{4}\delta_2^{5}+3669\delta_1^{2}\delta_2^{6}-800\delta_2^{7}+246\delta_1^{5}\delta_2^{5}-1396\delta_1^{3}\delta_2^{6}+1200\delta_1\delta_2^{7}+209\delta_1^{4}\delta_2^{6}\\*
&\quad -640\delta_1^{2}\delta_2^{7}+170\delta_2^{8}+120\delta_1^{3}\delta_2^{7}-170\delta_1\delta_2^{8}+45\delta_1^{2}\delta_2^{8}-20\delta_2^{9}+10\delta_1\delta_2^{9}+\delta_2^{10},\\[3pt]
&\quad \colorbox{HighlightGold}{$\displaystyle\textcolor{RoyalPurple}{24\delta_1^{2}-48\delta_2}$}-96\delta_1^{3}+72\delta_1\delta_2+194\delta_1^{4}-8\delta_1^{2}\delta_2+964\delta_2^{2}-246\delta_1^{5}-128\delta_1^{3}\delta_2-3796\delta_1\delta_2^{2}+209\delta_1^{6}+222\delta_1^{4}\delta_2\\*
&\quad +8099\delta_1^{2}\delta_2^{2}-4390\delta_2^{3}-120\delta_1^{7}-205\delta_1^{5}\delta_2-11455\delta_1^{3}\delta_2^{2}+16867\delta_1\delta_2^{3}+45\delta_1^{8}+120\delta_1^{6}\delta_2+11403\delta_1^{4}\delta_2^{2}-32387\delta_1^{2}\delta_2^{3}\\*
&\quad +9218\delta_2^{4}-10\delta_1^{9}-45\delta_1^{7}\delta_2-8085\delta_1^{5}\delta_2^{2}+38975\delta_1^{3}\delta_2^{3}-31813\delta_1\delta_2^{4}+\delta_1^{10}+10\delta_1^{8}\delta_2+4015\delta_1^{6}\delta_2^{2}-31405\delta_1^{4}\delta_2^{3}\\*
&\quad +52180\delta_1^{2}\delta_2^{4}-10933\delta_2^{5}-\delta_1^{9}\delta_2-1331\delta_1^{7}\delta_2^{2}+17083\delta_1^{5}\delta_2^{3}-51315\delta_1^{3}\delta_2^{4}+32642\delta_1\delta_2^{5}+265\delta_1^{8}\delta_2^{2}-6059\delta_1^{6}\delta_2^{3}\\*
&\quad +32125\delta_1^{4}\delta_2^{4}-44239\delta_1^{2}\delta_2^{5}+7994\delta_2^{6}-24\delta_1^{9}\delta_2^{2}+1271\delta_1^{7}\delta_2^{3}-12654\delta_1^{5}\delta_2^{4}+34161\delta_1^{3}\delta_2^{5}-19957\delta_1\delta_2^{6}-120\delta_1^{8}\delta_2^{3}\\*
&\quad +2879\delta_1^{6}\delta_2^{4}-15644\delta_1^{4}\delta_2^{5}+21448\delta_1^{2}\delta_2^{6}-3739\delta_2^{7}-290\delta_1^{7}\delta_2^{4}+3991\delta_1^{5}\delta_2^{5}-12222\delta_1^{3}\delta_2^{6}+7476\delta_1\delta_2^{7}-440\delta_1^{6}\delta_2^{5}\\*
&\quad +3654\delta_1^{4}\delta_2^{6}-5978\delta_1^{2}\delta_2^{7}+1122\delta_2^{8}-455\delta_1^{5}\delta_2^{6}+2240\delta_1^{3}\delta_2^{7}-1683\delta_1\delta_2^{8}-329\delta_1^{4}\delta_2^{7}+891\delta_1^{2}\delta_2^{8}-209\delta_2^{9}-165\delta_1^{3}\delta_2^{8}\\*
&\quad +209\delta_1\delta_2^{9}-55\delta_1^{2}\delta_2^{9}+22\delta_2^{10}-11\delta_1\delta_2^{10}-\delta_2^{11}\bigr),\\[8pt]
&I_{\overline{\sM}_2\setminus\Delta_1}=\bigl(\\*
&\quad \colorbox{HighlightGold}{$\displaystyle\textcolor{RoyalPurple}{24\delta_1^{2}-48\delta_2}$}-120\delta_1^{3}+364\delta_1\delta_2+266\delta_1^{4}-1270\delta_1^{2}\delta_2+700\delta_2^{2}-344\delta_1^{5}+2530\delta_1^{3}\delta_2-3365\delta_1\delta_2^{2}+285\delta_1^{6}-3186\delta_1^{4}\delta_2\\*
&\quad +7807\delta_1^{2}\delta_2^{2}-2520\delta_2^{3}-155\delta_1^{7}+2650\delta_1^{5}\delta_2-10887\delta_1^{3}\delta_2^{2}+9724\delta_1\delta_2^{3}+54\delta_1^{8}-1464\delta_1^{6}\delta_2+9789\delta_1^{4}\delta_2^{2}-17528\delta_1^{2}\delta_2^{3}\\*
&\quad +4235\delta_2^{4}-11\delta_1^{9}+520\delta_1^{7}\delta_2-5760\delta_1^{5}\delta_2^{2}+18576\delta_1^{3}\delta_2^{3}-13474\delta_1\delta_2^{4}+\delta_1^{10}-108\delta_1^{8}\delta_2+2156\delta_1^{6}\delta_2^{2}-12300\delta_1^{4}\delta_2^{3}\\*
&\quad +19234\delta_1^{2}\delta_2^{4}-3992\delta_2^{5}+10\delta_1^{9}\delta_2-468\delta_1^{7}\delta_2^{2}+5040\delta_1^{5}\delta_2^{3}-15455\delta_1^{3}\delta_2^{4}+10356\delta_1\delta_2^{5}+45\delta_1^{8}\delta_2^{2}-1176\delta_1^{6}\delta_2^{3}\\*
&\quad +7280\delta_1^{4}\delta_2^{4}-11460\delta_1^{2}\delta_2^{5}+2274\delta_2^{6}+120\delta_1^{7}\delta_2^{3}-1890\delta_1^{5}\delta_2^{4}+6664\delta_1^{3}\delta_2^{5}-4654\delta_1\delta_2^{6}+210\delta_1^{6}\delta_2^{4}-2016\delta_1^{4}\delta_2^{5}\\*
&\quad +3780\delta_1^{2}\delta_2^{6}-800\delta_2^{7}+252\delta_1^{5}\delta_2^{5}-1428\delta_1^{3}\delta_2^{6}+1216\delta_1\delta_2^{7}+210\delta_1^{4}\delta_2^{6}-648\delta_1^{2}\delta_2^{7}+170\delta_2^{8}+120\delta_1^{3}\delta_2^{7}-171\delta_1\delta_2^{8}\\*
&\quad +45\delta_1^{2}\delta_2^{8}-20\delta_2^{9}+10\delta_1\delta_2^{9}+\delta_2^{10},\\[3pt]
&\quad \colorbox{HighlightGold}{$\displaystyle\textcolor{RoyalPurple}{24\delta_1^{2}-48\delta_2}$}-96\delta_1^{3}+296\delta_1\delta_2+170\delta_1^{4}-856\delta_1^{2}\delta_2+484\delta_2^{2}-174\delta_1^{5}+1408\delta_1^{3}\delta_2-1996\delta_1\delta_2^{2}+111\delta_1^{6}-1434\delta_1^{4}\delta_2\\*
&\quad +3891\delta_1^{2}\delta_2^{2}-1350\delta_2^{3}-44\delta_1^{7}+931\delta_1^{5}\delta_2-4439\delta_1^{3}\delta_2^{2}+4415\delta_1\delta_2^{3}+10\delta_1^{8}-378\delta_1^{6}\delta_2+3140\delta_1^{4}\delta_2^{2}-6553\delta_1^{2}\delta_2^{3}\\*
&\quad +1772\delta_2^{4}-\delta_1^{9}+88\delta_1^{7}\delta_2-1365\delta_1^{5}\delta_2^{2}+5490\delta_1^{3}\delta_2^{3}-4660\delta_1\delta_2^{4}-9\delta_1^{8}\delta_2+336\delta_1^{6}\delta_2^{2}-2695\delta_1^{4}\delta_2^{3}+5275\delta_1^{2}\delta_2^{4}-1286\delta_2^{5}\\*
&\quad -36\delta_1^{7}\delta_2^{2}+728\delta_1^{5}\delta_2^{3}-3150\delta_1^{3}\delta_2^{4}+2651\delta_1\delta_2^{5}-84\delta_1^{6}\delta_2^{3}+980\delta_1^{4}\delta_2^{4}-2184\delta_1^{2}\delta_2^{5}+546\delta_2^{6}-126\delta_1^{5}\delta_2^{4}+840\delta_1^{3}\delta_2^{5}\\*
&\quad -833\delta_1\delta_2^{6}-126\delta_1^{4}\delta_2^{5}+448\delta_1^{2}\delta_2^{6}-135\delta_2^{7}-84\delta_1^{3}\delta_2^{6}+136\delta_1\delta_2^{7}-36\delta_1^{2}\delta_2^{7}+18\delta_2^{8}-9\delta_1\delta_2^{8}-\delta_2^{9},\\[3pt]
&\quad \colorbox{HighlightGold}{$\displaystyle\textcolor{RoyalPurple}{-108\delta_1^{2}\delta_2+576\delta_2^{2}}$}+378\delta_1^{3}\delta_2-2304\delta_1\delta_2^{2}-606\delta_1^{4}\delta_2+4651\delta_1^{2}\delta_2^{2}-2400\delta_2^{3}+570\delta_1^{5}\delta_2-5889\delta_1^{3}\delta_2^{2}+8400\delta_1\delta_2^{3}\\*
&\quad -334\delta_1^{6}\delta_2+4927\delta_1^{4}\delta_2^{2}-13760\delta_1^{2}\delta_2^{3}+4180\delta_2^{4}+120\delta_1^{7}\delta_2-2727\delta_1^{5}\delta_2^{2}+13400\delta_1^{3}\delta_2^{3}-12540\delta_1\delta_2^{4}-24\delta_1^{8}\delta_2\\*
&\quad +960\delta_1^{6}\delta_2^{2}-8224\delta_1^{4}\delta_2^{3}+16744\delta_1^{2}\delta_2^{4}-3980\delta_2^{5}+2\delta_1^{9}\delta_2-194\delta_1^{7}\delta_2^{2}+3136\delta_1^{5}\delta_2^{3}-12588\delta_1^{3}\delta_2^{4}+9950\delta_1\delta_2^{5}+17\delta_1^{8}\delta_2^{2}\\*
&\quad -680\delta_1^{6}\delta_2^{3}+5554\delta_1^{4}\delta_2^{4}-10496\delta_1^{2}\delta_2^{5}+2273\delta_2^{6}+64\delta_1^{7}\delta_2^{3}-1350\delta_1^{5}\delta_2^{4}+5794\delta_1^{3}\delta_2^{5}-4546\delta_1\delta_2^{6}+140\delta_1^{6}\delta_2^{4}\\*
&\quad -1660\delta_1^{4}\delta_2^{5}+3567\delta_1^{2}\delta_2^{6}-800\delta_2^{7}+196\delta_1^{5}\delta_2^{5}-1294\delta_1^{3}\delta_2^{6}+1200\delta_1\delta_2^{7}+182\delta_1^{4}\delta_2^{6}-624\delta_1^{2}\delta_2^{7}+170\delta_2^{8}+112\delta_1^{3}\delta_2^{7}\\*
&\quad -170\delta_1\delta_2^{8}+44\delta_1^{2}\delta_2^{8}-20\delta_2^{9}+10\delta_1\delta_2^{9}+\delta_2^{10},\\[3pt]
&\quad \colorbox{HighlightGold}{$\displaystyle\textcolor{RoyalPurple}{120\delta_1^3}$}-600\delta_1^4+1450\delta_1^5-2200\delta_1^6+2299\delta_1^7-1717\delta_1^8+923\delta_1^9-351\delta_1^{10}+90\delta_1^{11}-14\delta_1^{12}+\delta_1^{13}\bigr).
\end{align*}
\endgroup

\Needspace{7\baselineskip}
For $g=3$, the first relation in bundle class generators $\sigma,\lambda^{\pm1}$ is
\[
-\sigma^{6}\,\lambda^{4}+5\,\sigma^{5}\,\lambda^{4}-5\,\sigma^{4}\,\lambda^{4}-6\,\sigma^{3}\,\lambda^{4}+\sigma^{3}\,\lambda^{2}+7\,\sigma^{2}\,\lambda^{4}-\sigma^{2}\,\lambda^{2}+2\,\sigma\,\lambda^{4}-2\,\sigma\,\lambda^{2}-\lambda^{4}+1
\]
and the second is
\[
-\sigma^{6}\,\lambda^{4}+4\,\sigma^{5}\,\lambda^{4}-\sigma^{4}\,\lambda^{4}-8\,\sigma^{3}\,\lambda^{4}+\sigma^{3}\,\lambda^{2}+2\,\sigma^{2}\,\lambda^{4}+4\,\sigma\,\lambda^{4}-2\,\sigma\,\lambda^{2}+\sigma-\lambda^{2}+1.
\]

For $g=3$, \Cref{prop:sectorpoint} identifies the $28$ components of $\sI\sH_3$ with the points of $\Spec(K_0(\sH_3)\otimes\overline{\bQ})$. Each dot in \Cref{fig:g3-specialization} is labelled by the values of $\sigma=[\cE]$ and $\lambda=[\cL]$, which at $(C,\tau)\in\sI\sH_3$ are the traces of $\tau$ on the respective fibers at $[C]$.

For a pair $(C,\tau)$ on a component other than those for $1$ and $\iota$, choose coordinates in which $C$ has equation $y^2=f(x,1)$ and $\tau$ induces $x\mapsto\rho x$ on $\bP^1$. The expressions for $f$ in \Cref{fig:g3-specialization} use a square-free binary form $u$ satisfying $u(1,0)u(0,1)\ne0$, with $\deg u$ determined by $\deg f=8$. If $\operatorname{diag}(\rho\beta,\beta)$ represents $\tau$, then $(\sigma,\lambda)=(1+\rho+\rho^{-1},\rho^2\beta^4)$ and $\tau(x,y)=(\rho x,\beta^{-4}y)=(\rho x,\rho^2\lambda^{-1}y)$. The value of $\sigma$ determines $\rho$ up to taking its inverse, with the choice $\rho=\lambda^{-4}$ when one fixed point is a root and the other is not.

Let $\mathfrak m=\ker(\operatorname{rank}\colon K_0(\sH_3)\otimes\bF_2\to\bF_2)$, and let $F$ be the image of the gamma-filtration on $K_0(\sH_3)\otimes\bF_2$. The twelve purple points have both $\rho$ and $\lambda$ of order a power of $2$, so they specialize to $\mathfrak m$ by \Cref{prop:local-lengths}. Thus, $\dim_{\bF_2}\bigl(K_0(\sH_3)\otimes\bF_2\bigr)_{\mathfrak m}=12$, as is also visible in the monomial basis of $\operatorname{gr}_F\bigl(K_0(\sH_3)\otimes\bF_2\bigr)$ from \Cref{prop:g3mod2}.

\begin{figure}[!htbp]
\centering
\begin{tikzpicture}[x=1cm,y=1cm,font=\small]
  \node[font=\small\bfseries] at (7.75,16.10)
    {(a) The $28$ components of $\sI\sH_3$};
  \node at (7.75,15.55)
    {$\pi_0(\sI\sH_3)\ \longleftrightarrow\ \Spec(K_0(\sH_3)\otimes\overline{\bQ})$};
  \node at (7.75,14.98)
    {Dot labels: $(\sigma,\lambda)$, the values of $[\cE]$ and $[\cL]$ at each point.};

  \node[anchor=west,inner xsep=0pt] at (0.15,14.30) {Identity: $\sH_3$};
  \node[anchor=west,inner xsep=0pt] at (7.50,14.30)
    {Hyperelliptic involution: $\sH_3$};
  \foreach \x/\coord in {4.45/{(3,1)},13.65/{(3,-1)}} {
    \fill[RoyalPurple] (\x,14.30) circle (2.1pt);
    \node[anchor=west,inner xsep=0pt,text=RoyalPurple]
      at (\x+0.20,14.30) {$\coord$};
  }

  \node[anchor=west,inner xsep=0pt,font=\small\bfseries] at (0.15,13.65)
    {Neither $(0:1)$ nor $(1:0)$ is a root};
  \foreach \y/\form in {
    13.08/{u(x_0^2,x_1^2)},12.49/{u(x_0^4,x_1^4)},11.90/{u(x_0^8,x_1^8)}}
    \node[anchor=west,inner xsep=0pt] at (0.15,\y) {$f=\form$};
  \foreach \x/\y/\coord in {
    4.45/13.08/{(-1,1)},7.15/13.08/{(-1,-1)},
    4.45/12.49/{(1,1)},7.15/12.49/{(1,-1)},
    4.45/11.90/{(1+\sqrt2,i)},7.15/11.90/{(1+\sqrt2,-i)},
    9.85/11.90/{(1-\sqrt2,i)},12.55/11.90/{(1-\sqrt2,-i)}} {
    \fill[RoyalPurple] (\x,\y) circle (2.1pt);
    \node[anchor=west,inner xsep=0pt,text=RoyalPurple]
      at (\x+0.20,\y) {$\coord$};
  }

  \node[anchor=west,inner xsep=0pt,font=\small\bfseries] at (0.15,11.07)
    {Both $(0:1)$ and $(1:0)$ are roots};
  \foreach \y/\form in {
    10.49/{x_0x_1u(x_0^2,x_1^2)},9.90/{x_0x_1u(x_0^3,x_1^3)},9.31/{x_0x_1u(x_0^6,x_1^6)}}
    \node[anchor=west,inner xsep=0pt] at (0.15,\y) {$f=\form$};
  \foreach \x/\coord in {4.45/{(-1,i)},7.15/{(-1,-i)}} {
    \fill[RoyalPurple] (\x,10.49) circle (2.1pt);
    \node[anchor=west,inner xsep=0pt,text=RoyalPurple]
      at (\x+0.20,10.49) {$\coord$};
  }
  \foreach \x/\y/\coord in {
    4.45/9.90/{(0,1)},7.15/9.90/{(0,-1)},
    4.45/9.31/{(2,i)},7.15/9.31/{(2,-i)}} {
    \fill[black!50] (\x,\y) circle (1.8pt);
    \node[anchor=west,inner xsep=0pt,text=black!65]
      at (\x+0.20,\y) {$\coord$};
  }

  \node[anchor=west,inner xsep=0pt,font=\small\bfseries] at (0.15,8.48)
    {$(0:1)$ is a root and $(1:0)$ is not};
  \node[anchor=west,inner xsep=0pt] at (8.00,8.48)
    {$f=x_0u(x_0^7,x_1^7)$};
  \foreach \x/\y/\coord in {
    0.15/7.76/{(1+2\cos(2\pi/7),e^{3\pi i/7})},
    5.40/7.76/{(1+2\cos(4\pi/7),e^{\pi i/7})},
    10.65/7.76/{(1+2\cos(6\pi/7),e^{2\pi i/7})},
    0.15/7.15/{(1+2\cos(2\pi/7),e^{-3\pi i/7})},
    5.40/7.15/{(1+2\cos(4\pi/7),e^{-\pi i/7})},
    10.65/7.15/{(1+2\cos(6\pi/7),e^{-2\pi i/7})},
    0.15/6.54/{(1+2\cos(2\pi/7),-e^{3\pi i/7})},
    5.40/6.54/{(1+2\cos(4\pi/7),-e^{\pi i/7})},
    10.65/6.54/{(1+2\cos(6\pi/7),-e^{2\pi i/7})},
    0.15/5.93/{(1+2\cos(2\pi/7),-e^{-3\pi i/7})},
    5.40/5.93/{(1+2\cos(4\pi/7),-e^{-\pi i/7})},
    10.65/5.93/{(1+2\cos(6\pi/7),-e^{-2\pi i/7})}} {
    \fill[black!50] (\x,\y) circle (1.8pt);
    \node[anchor=west,inner xsep=0pt,text=black!65]
      at (\x+0.20,\y) {$\coord$};
  }
  \draw[black!18,line width=0.4pt] (0.00,5.38) -- (15.55,5.38);

  \node[font=\small\bfseries] at (3.40,4.86) {(b) Specialization modulo $2$};
  \node at (3.40,4.32) {$\Spec(K_0(\sH_3)\otimes\bF_2)$};
  \node[RoyalPurple] at (3.40,3.67) {$(\sigma,\lambda)=(1,1)$};
  \fill[RoyalPurple!12] (3.40,3.08) circle (0.27cm);
  \fill[RoyalPurple] (3.40,3.08) circle (3.5pt);
  \node at (3.40,2.47) {$\mathfrak m=(\tau_1,\kappa_2)$};
  \node[RoyalPurple] at (3.40,1.96) {$\dim_{\bF_2}(K_0(\sH_3)\otimes\bF_2)_{\mathfrak m}=12$};
  \node[align=center,text width=6.1cm,text=black!65] at (3.40,1.12)
    {The $16$ gray points specialize elsewhere.};

  \draw[->,RoyalPurple,densely dashed,line width=0.7pt]
    (5.23,3.08) -- (8.57,3.08)
    node[midway,above=5pt,text=black,font=\footnotesize]
    {associated graded};
  \node[font=\small\bfseries] at (11.90,4.86) {(c) The $12$ basis monomials};
  \node at (11.90,4.32)
    {$\operatorname{gr}_F(K_0(\sH_3)\otimes\bF_2)$};

  \begin{scope}[shift={(9.10,1.10)},x=1.40cm,y=0.54cm]
    \foreach \i in {0,1,2,3}
      \fill[RoyalPurple!15] (\i,0) rectangle +(1,1);
    \foreach \i in {0,1}
      \foreach \j in {1,2,3,4}
        \fill[RoyalPurple!15] (\i,\j) rectangle +(1,1);
    \draw[step=1,black!18,line width=0.3pt] (0,0) grid (4,5);
    \draw[RoyalPurple,line width=0.85pt]
      (0,0) -- (4,0) -- (4,1) -- (2,1) -- (2,5) -- (0,5) -- cycle;
    \foreach \x/\y/\monomial in {
      0.5/0.5/{1},1.5/0.5/{\tau_1},2.5/0.5/{\tau_1^2},3.5/0.5/{\tau_1^3},
      0.5/1.5/{\kappa_2},1.5/1.5/{\tau_1\kappa_2},
      0.5/2.5/{\kappa_2^2},1.5/2.5/{\tau_1\kappa_2^2},
      0.5/3.5/{\kappa_2^3},1.5/3.5/{\tau_1\kappa_2^3},
      0.5/4.5/{\kappa_2^4},1.5/4.5/{\tau_1\kappa_2^4}}
      \node at (\x,\y) {$\monomial$};
  \end{scope}
  \node at (3.40,0.10) {$CH^\bullet(\sH_3)\otimes\bF_2$};
  \node at (3.40,-0.48) {$\cong\bF_2[t,c_2,c_3]$};
  \draw[->>,RoyalPurple,line width=0.7pt]
    (6.10,0.10) -- (8.95,0.10)
    node[midway,above=5pt,text=black,font=\footnotesize]
    {$(t,c_2,c_3)\longmapsto(\tau_1,\kappa_2,0)$};
  \node at (11.90,0.10)
    {$\operatorname{gr}_F(K_0(\sH_3)\otimes\bF_2)$};
  \node at (11.90,-0.48)
    {$\cong\bF_2[\tau_1,\kappa_2]/(\tau_1^4,\tau_1^2\kappa_2,\kappa_2^5)$};
\end{tikzpicture}
\caption{(a) Each dot represents a point of $\Spec(K_0(\sH_3)\otimes\overline{\bQ})$, labelled by the coordinates $(\sigma,\lambda)$. Each point corresponds to one component of $\sI\sH_3$. (b) Reducing coefficients modulo $2$ sends the twelve purple points to $(1,1)$, where the local ring has length $12$. (c) The twelve monomials form a basis of $\operatorname{gr}_F(K_0(\sH_3)\otimes\bF_2)$, which is a quotient of $CH^\bullet(\sH_3)\otimes\bF_2$.}
\label{fig:g3-specialization}
\end{figure}

\subsection{Proofs of the $K$-Chern class presentations}\label{app:chernproofs}

We prove \Cref{thm:introfc} by expanding the relations in bundle classes. In even genus, the identities
\[
[\cV^\vee]=2-\delta_1-\delta_2,
\qquad
[\det\cV^\vee]=1-\delta_1
\]
give $\nu=(2-\delta_1-\delta_2)/(1-\delta_1)$ and $\lambda=(1-\delta_1)^{-1}$. In odd genus, \Cref{lem:cEchern} gives $\sigma=3-\kappa_2$ and $\lambda=(1-\tau_1)^{-1}$.

The first two defining polynomials $\Phi_g,\Psi_g$ in \eqref{eq:K0-Hg-chern-class} and \eqref{eq:odd-chern-binomial} satisfy
\begin{equation}\label{eq:chern-coefficient-identities}
\begin{aligned}
\Phi_g&=\lambda^{-(2g+2)}F_g^{\mathrm{even}},&
\Psi_g&=\lambda^{-(2g+2)}(G_g^{\mathrm{even}}-F_g^{\mathrm{even}})
&& (g\text{ even}),\\
\Phi_g&=\lambda^{-4}F_g^{\mathrm{odd}},&
\Psi_g&=\lambda^{-4}(G_g^{\mathrm{odd}}-2F_g^{\mathrm{odd}})
&& (g\text{ odd}).
\end{aligned}
\end{equation}
We use the binomial expansions
\begin{equation}\label{eq:chern-polynomial-expansions}
\begin{aligned}
P_k(2-\delta_1-\delta_2,1-\delta_1)
&=\sum_{\substack{i,j\ge0\\i+j\le k}}(-1)^{i+j}
\binom{i+j}{j}\binom{k+j+1}{i+2j+1}\delta_1^i\delta_2^j,\\
Q_k(3-\kappa_2)
&=\sum_{j=0}^k(-1)^j\left[\binom{k+j+1}{2j+1}+\binom{k+j}{2j+1}\right]\kappa_2^j.
\end{aligned}
\end{equation}

\begin{proof}[Proof of \Cref{thm:introfc}]
We first prove the binomial expansions in \Cref{eq:chern-polynomial-expansions}. The recurrences for $P_k$ and $Q_k$ give
\[
\begin{aligned}
\sum_{k\ge0}P_k(2-\delta_1-\delta_2,1-\delta_1)u^k
&=\frac{1}{(1-u)^2+\delta_1u(1-u)+\delta_2u},\\
\sum_{k\ge0}Q_k(3-\kappa_2)u^k
&=\frac{1+u}{(1-u)^2+\kappa_2u}.
\end{aligned}
\]
In the first series, the coefficient of $\delta_1^i\delta_2^j$ is
\[
(-1)^{i+j}\binom{i+j}{j}\frac{u^{i+j}}{(1-u)^{i+2j+2}},
\]
whose coefficient of $u^k$ is $(-1)^{i+j}\binom{i+j}{j}\binom{k+j+1}{i+2j+1}$. In the second series, the coefficient of $\kappa_2^j$ is $(-1)^j u^j(1+u)/(1-u)^{2j+2}$, whose coefficient of $u^k$ is $(-1)^j[\binom{k+j+1}{2j+1}+\binom{k+j}{2j+1}]$.

Using $P_k(\nu\lambda^{-1},\lambda^{-1})=\lambda^{-k}P_k(\nu,\lambda)$, we substitute the binomial expansions into the right-hand sides of \eqref{eq:chern-coefficient-identities}. In even genus, applying $XP_k(X,Y)=P_{k+1}(X,Y)+YP_{k-1}(X,Y)$ and Pascal's identity, then expanding the powers of $1-\delta_1$, gives the coefficients in \Cref{eq:K0-Hg-chern-class}. In odd genus, the recurrence gives
\[
\begin{aligned}
\lambda^{-4}(G_g^{\mathrm{odd}}-2F_g^{\mathrm{odd}})
&=(2-\kappa_2)(1-\tau_1)^4
+(1-\tau_1)^2\bigl(Q_{g-2}(3-\kappa_2)-Q_g(3-\kappa_2)\bigr)\\
&\quad+Q_{2g}(3-\kappa_2)-Q_{2g-1}(3-\kappa_2).
\end{aligned}
\]
Expanding the powers of $1-\tau_1$ and applying Pascal's identity to \Cref{eq:chern-polynomial-expansions} gives the coefficients in \Cref{eq:odd-chern-binomial}.

For even $g$, the binomial expansion in \Cref{eq:chern-polynomial-expansions} gives, modulo $(\delta_1^3,\delta_1\delta_2,\delta_2^2)$,
\[
P_k(2-\delta_1-\delta_2,1-\delta_1)
\equiv k+1-\binom{k+1}{2}\delta_1
+\binom{k+1}{3}\delta_1^2-\binom{k+2}{3}\delta_2.
\]
Substituting in the formulas for $\Phi_g$ and $\Psi_g$ gives
\[
\begin{aligned}
\Phi_g&\equiv-(4g+2)\delta_1
&&\pmod{(\delta_1^2,\delta_2)},\\
\Psi_g&\equiv g(g-1)\delta_1^2-4g(g+1)\delta_2
&&\pmod{(\delta_1^3,\delta_1\delta_2,\delta_2^2)}.
\end{aligned}
\]
Expanding $1-(1-\delta_1)^{4g+2}$ by the binomial theorem gives the third relation in \eqref{eq:K0-Hg-chern-class}.

For odd $g$, \Cref{eq:chern-polynomial-expansions} gives
\[
Q_k(3-\kappa_2)\equiv2k+1-\frac{k(k+1)(2k+1)}6\kappa_2
\pmod{\kappa_2^2}.
\]
Substituting in the formulas for $\Phi_g$ and $\Psi_g$ gives
\[
\begin{aligned}
\Phi_g&\equiv-(8g+4)\tau_1
&&\pmod{(\tau_1^2,\kappa_2)},\\
\Psi_g&\equiv8\tau_1^2-2g(g+1)\kappa_2
&&\pmod{(\tau_1^3,\tau_1\kappa_2,\kappa_2^2)}.
\end{aligned}
\]
Expanding $1-(1-\tau_1)^{8g+4}$ by the binomial theorem gives the third relation in \eqref{eq:odd-chern-binomial}.

Replacing $G_g^{\mathrm{even}}$ by $G_g^{\mathrm{even}}-F_g^{\mathrm{even}}$, or $G_g^{\mathrm{odd}}$ by $G_g^{\mathrm{odd}}-2F_g^{\mathrm{odd}}$, preserves the respective relation ideals. Finally, $(1-\delta_1)^{4g+2}=1$ and $(1-\tau_1)^{8g+4}=1$ make $1-\delta_1$ and $1-\tau_1$ invertible. Thus the polynomial quotients in \Cref{thm:introfc} agree with the Laurent presentations of \Cref{thm:even,thm:odd}.
\end{proof}

\begin{proof}[Verification of \Cref{eq:even-mod-two-leading,eq:odd-mod-two-leading}]
All coefficients are reduced modulo $2$. For even $g$, the square of $\delta_1+\delta_2$ vanishes modulo $(\delta_1^2,\delta_2^2)$, so only the constant and linear terms in the first argument of $P_k$ remain. Its defining sum gives
\[
\begin{aligned}
P_{2g-1}(\delta_1+\delta_2,1+\delta_1)
&=P_{4g-1}(\delta_1+\delta_2,1+\delta_1)=0,\\
P_{2g-2}(\delta_1+\delta_2,1+\delta_1)&=1+\delta_1,
\qquad P_{4g}(\delta_1+\delta_2,1+\delta_1)=1.
\end{aligned}
\]
Substitution in \Cref{eq:chern-coefficient-identities} proves \Cref{eq:even-mod-two-leading}.

For odd $g$, the binomial expansions in \Cref{eq:chern-polynomial-expansions}, reduced modulo $2$, give
\[
\begin{aligned}
Q_{g-1}(3-\kappa_2)&\equiv1+\frac{g-1}{2}\kappa_2
&&\pmod{\kappa_2^2},\\
Q_{2g}(3-\kappa_2)-Q_{2g-1}(3-\kappa_2)&\equiv\kappa_2^2
&&\pmod{\kappa_2^3}.
\end{aligned}
\]
The recurrence $Q_{g-2}-Q_g=(1-\sigma)Q_{g-1}$ in characteristic two gives
\[
\begin{aligned}
\Psi_g&=\kappa_2\bigl(1+\tau_1^4+(1+\tau_1^2)Q_{g-1}(3-\kappa_2)\bigr)\\
&\quad+Q_{2g}(3-\kappa_2)-Q_{2g-1}(3-\kappa_2),
\end{aligned}
\]
which proves the second congruence in \Cref{eq:odd-mod-two-leading}. For $\Phi_g$, the coefficient of $\tau_1^2$ is $4g$, and that of $\kappa_2$ is the sum of $g(2g^2+1)/3$ and $g(2g+1)(4g+1)/3$, both odd. Its constant term also vanishes, proving the first congruence.
\end{proof}

\printbibliography

\bigskip
\noindent\begin{minipage}{\textwidth}
\small
\textsc{Michael Ruofan Zeng}\\
Department of Mathematics, University of Washington\\
Box 354350, C-138 Padelford\\
Seattle, WA 98195-4350, USA\\
Email: \texttt{zengrf@uw.edu}\\
Website: \href{https://zengrf.github.io}{\texttt{zengrf.github.io}}
\end{minipage}

\clearpage
\end{CJK*}
\end{document}